\documentclass[11pt]{amsart}
\usepackage{amsmath}
\usepackage{amsfonts}
\usepackage{amssymb}
\usepackage{mathrsfs}
\usepackage{todonotes}
\usepackage{graphicx}%
\providecommand{\U}[1]{\protect\rule{.1in}{.1in}}
\newtheorem{theorem}{Theorem}

\newtheorem{claim}[theorem]{Claim}

\newtheorem{corollary}[theorem]{Corollary}

\newtheorem{definition}[theorem]{Definition}

\newtheorem{lemma}[theorem]{Lemma}

\newtheorem{problem}[theorem]{Problem}
\newtheorem{proposition}[theorem]{Proposition}

\title{Are scales Fr\'{e}chet?}

\author{R. Figueroa-Sierra
\and O. Guzm\'{a}n
\and M. Hru\v{s}\'{a}k
\and A. Kwela
}

\thanks{\textit{Keywords:} Fr\'{e}chet spaces, scales,
Ideals on countable sets, Dow spaces, Kat\v{e}tov order, cardinal invariants.\\ 
\textit{AMS Classification:} 54A20, 54A35, 03E05 ,03E17\\
The second author was supported by the PAPIIT grant
IA 104124 and the CONAHCYT grant CBF2023-2024-903. The third author was supported by a PAPIIT grant IN101323 and a SECIHTI grant CBF-2025-I-898.}

\begin{document} 

\begin{abstract}
We continue the study of Dow spaces of a $\mathfrak{b}$-scale, originally
introduced by Dow in \cite{PiWeightandFrechetProperty}. We prove that it is
consistent that all such spaces are Fr\'{e}chet, but it is also consistent
that none of them is. \ We use these spaces to exhibit (consistently) a
$\triangle_{2}^{1}$ ideal that does not satisfy the Category Dichotomy.
Finally, we prove that the Category Dichotomy holds for all co-analytic ideals.

\end{abstract}

\maketitle

\section{Introduction}

During the 2012 thematic program on Forcing and its Applications at the Fields
Institute, Juh\'{a}sz asked the following question:

\begin{problem}
[Juh\'{a}sz]Is there a countable, Fr\'{e}chet\footnote{Undefined concepts will
be reviewed in the following sections.} space with uncountable $\pi$-weight?
\label{Problema Juhasz}
\end{problem}

Although the consistency of existence of such spaces is easy to establish, no \textsf{ZFC
}result was known. The problem was motivated by the following result of
Hru\v{s}\'{a}k and Ramos:

\begin{theorem}
[H., Ramos \cite{MalykhinProblem}]It is consistent that every countable,
Fr\'{e}chet topological group is second countable.
\end{theorem}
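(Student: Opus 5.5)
The plan is to reduce the statement to a combinatorial property of the neighbourhood filter of the identity, and then to build a model where that property fails for every non-countably generated filter. First I would make the reduction. If $G$ is a countable Fr\'{e}chet topological group, then by homogeneity and the Birkhoff--Kakutani theorem it suffices to show that $G$ is first countable at the identity $e$. A countable first countable group is metrizable, and a countable metrizable space is second countable. So fix the filter $\mathcal{F}$ of neighbourhoods of $e$ restricted to $G\setminus\{e\}\cong\omega$, with dual ideal $\mathcal{I}=\mathcal{F}^{*}$. The Fr\'{e}chet property at $e$ says: every $A\in\mathcal{F}^{+}$ contains an infinite $B$ with $B\subseteq^{*}F$ for all $F\in\mathcal{F}$.

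The group structure gives more than this, and this is essential, since the sequential fan shows that Fr\'{e}chet alone does not suffice. By Nyikos' theorem, Fr\'{e}chet groups are $\alpha_{4}$ (strongly Fr\'{e}chet). Moreover, translating sequences converging to $e$ by multiplication yields a uniform way to diagonalize countably many convergent sequences. I would isolate this as a statement about the ideal $\mathcal{I}$: the sets converging to $e$, together with $\mathcal{I}$, form a tall Fr\'{e}chet-type family that is closed under the diagonalizations given by $\alpha_{4}$. I would then show in \textsf{ZFC} that if $\mathcal{F}$ is not countably generated, this yields a specific Kat\v{e}tov-type configuration. Concretely, $\mathcal{I}$ restricted to some positive set is Kat\v{e}tov above a fixed ideal, for instance $\mathrm{fin}\times\mathrm{fin}$ or a related ideal detecting uncountable character.

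The consistency part is a countable support iteration of length $\omega_{2}$ of proper forcings over a model of \textsf{CH}, with a bookkeeping that handles every name for a countable group topology. At each stage, given a Fr\'{e}chet $\alpha_{4}$ filter $\mathcal{F}$ with uncountable character in the intermediate model, I force with a Mathias/Laver-style poset relative to the relevant ideal. This adds a set $X\in\mathcal{F}^{+}$ that contains no infinite subset pseudo-intersecting $\mathcal{F}$, so $\mathcal{F}$ ceases to be Fr\'{e}chet. Filters of character $\omega_{1}$ are caught at intermediate stages by reflection, using the $\aleph_{2}$-c.c.\ and the fact that every countable group topology in the final model is determined by a small set of reals. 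Then $\mathfrak{c}=\omega_{2}$, and in the final model every countable Fr\'{e}chet group has character $\omega$.

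The main obstacle will be preservation. I must show that the Fr\'{e}chet property, once destroyed for $\mathcal{F}$, stays destroyed. That is, the generic witness $X$ must remain a positive set with no convergent subsequence through the rest of the iteration, even as $\mathcal{F}$ grows to the filter generated in later models. I would handle this by proving that each iterand preserves ``$\mathcal{I}^{+}$-$\omega$-hitting'' or a Laver-property-style statement tailored to $\mathcal{I}$, verified by a rank/fusion argument. I would then use the standard preservation theorem for countable support iterations of proper forcings to carry it through limit stages. A secondary difficulty is the \textsf{ZFC} reduction: ensuring that the $\alpha_{4}$ structure really yields a uniform definable configuration that the forcing can kill, rather than something that depends on the particular group.
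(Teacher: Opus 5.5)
There is a genuine gap, and it sits at the step the paper identifies, at the beginning of Section~\ref{Sec todas no}, as the main difficulty of this method.

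Let $\tau$ be a Fr\'{e}chet non-metrizable group topology in the final model, $\alpha$ a reflecting stage, $\tau_\alpha=\tau\cap V[G_\alpha]$, and $X$ the set added at stage $\alpha$. So $X$ is $\tau_\alpha$-dense and $\mathcal{I}_{\tau_\alpha}(e)\upharpoonright X$ is $\omega$-hitting; this is what $\mathbb{L}(\textsf{nwd}(G,\tau_\alpha)^{\ast})$ gives by Proposition~\ref{Prop Michael Ariet}.

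Half of what you need passes upward for free. If the tail preserves $\omega$-hitting families, no infinite $Y\subseteq X$ converges to $e$ in $\tau_\alpha$, so none converges in the finer topology $\tau$.

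The other half, $e\in\overline{X}^{\tau}$, goes the wrong way. The final neighbourhood filter of $e$ is not ``the filter generated by $\mathcal{F}$ in later models''. It is the neighbourhood filter of an arbitrary Fr\'{e}chet group topology of the final model. That topology may contain open sets not in $V[G_{\alpha+1}]$, which may therefore depend on $X$, and a priori one of them may miss $X$, i.e.\ $X\in\mathcal{I}_\tau(e)\supseteq\mathcal{I}_{\tau_\alpha}(e)$. No preservation property of the tail relative to the fixed ideal $\mathcal{I}_{\tau_\alpha}(e)$ can rule this out, whether your ``$\mathcal{I}^{+}$-$\omega$-hitting'' or a Laver-type property.

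In fact, since $\tau$ is Fr\'{e}chet and $X$ contains no sequence converging to $e$, proving $e\in\overline{X}^{\tau}$ \emph{is} the contradiction the whole proof must produce. It needs a real argument about names for $\tau$-open neighbourhoods of $e$ over the quotient forcing. This is where Hru\v{s}\'{a}k and Ramos use the group operation. It is also where the present paper, with no algebra available, needs the long argument of Proposition~\ref{Prop dificil} with types, pure preference and $\sigma$-filtered quotients.

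You spend the group structure instead on an unspecified ZFC Kat\v{e}tov reduction (to $\mathrm{fin}\times\mathrm{fin}$ ``or a related ideal''), and this is not needed. Sealing only requires uncountable $\pi$-character at $e$. In a topological group $\pi\chi=\chi$: if $V$ ranges over a local $\pi$-base at $e$, the sets $VV^{-1}$ form a local base. So every countable non-metrizable group already satisfies the hypothesis of Proposition~\ref{Prop Michael Ariet}(1).

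The iteration framework also has unsupported steps. It is not true that every countable group topology in the final model is ``determined by a small set of reals'', since the character of $e$ can be $\mathfrak{c}=\omega_2$. What is true is that, for a relative club of $\alpha\in S_{\omega_1}(\omega_2)$, the trace $\tau\cap V[G_\alpha]$ is a Fr\'{e}chet non-metrizable group topology in $V[G_\alpha]$. But you must force with the corresponding Laver poset at stage $\alpha$ itself. The trace depends on the $\mathbb{P}_{\omega_2}$-name for $\tau$, so ordinary bookkeeping of $\mathbb{P}_\alpha$-names only lets you treat it at some later stage $\beta$. There the topology it generates need not be Fr\'{e}chet. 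Then the Laver forcing need not preserve $\omega$-hitting families, because Proposition~\ref{Prop Michael Ariet}(2) uses Fr\'{e}chetness, and earlier seals could be destroyed. This is why $\Diamond(S_{\omega_1}(\omega_2))$ is used.

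Finally, the posets involved are $\sigma$-centered, and the available iteration theorem (Brendle--Hru\v{s}\'{a}k) is for finite support iterations of c.c.c.\ forcings that \emph{strongly} preserve $\omega$-hitting families. The standard countable support preservation theorems are not stated for this property. By switching to countable support you trade a theorem you could cite for one you would have to prove.
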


Since for topological groups the notions of weight and $\pi$-weight coincide,
Juh\'{a}sz's question asks to what extent the algebraic structure is needed for
the consistent result above. The question is particularly interesting, as many
of the results in \cite{MalykhinProblem} do not use the algebraic structure at
all. The problem remained open until Dow provided a solution:

\begin{theorem}
[Dow \cite{PiWeightandFrechetProperty}]There exists a countable, Fr\'{e}chet,
zero dimensional space with $\pi$-weight at least $\mathfrak{b}.$
\label{Teorema Dow}
\end{theorem}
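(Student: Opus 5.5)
We will build the space from a $\mathfrak{b}$-scale, in the spirit of the ``Dow spaces'' mentioned in the abstract. Fix a $\mathfrak{b}$-scale $\{f_\alpha:\alpha<\mathfrak{b}\}\subseteq\omega^\omega$: it is $\le^*$-increasing, consists of increasing functions, and is unbounded. The underlying set will be $X=\omega\times\omega$ (or $\omega^{<\omega}$) together with one extra point $\infty$, which will witness both properties.

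\textbf{Step 1: the topology.} Every point of $\omega\times\omega$ will be isolated, so the space is zero dimensional there. For each $\alpha$, let $D_\alpha=\{(n,m):m\le f_\alpha(n)\}$ be the region under the graph of $f_\alpha$. The neighbourhoods of $\infty$ will be generated by sets of the form
\[
\{\infty\}\cup\Big(\big(\omega\times\omega\big)\setminus\big(D_\alpha\cup F\big)\Big),
\]
where $F$ ranges over a suitable ideal of ``small'' sets (for instance, finite unions of columns). These sets are clopen, so the space is Hausdorff and zero dimensional. A more refined variant, with a tree of levels, would let $\infty$ be approached through intermediate non-isolated points; that variant may be needed to make the $\pi$-weight large while keeping Fréchet.

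\textbf{Step 2: $\pi$-weight.} The natural way to get $\pi w\ge\mathfrak{b}$ is to make the relevant open sets look like sets $A$ lying above graphs. Then, given fewer than $\mathfrak{b}$ open sets, we can bound the functions coding them by some $f_\alpha$, and the region under $f_\beta$ for a larger $\beta$ gives an open set containing none of them. This only works if open sets near $\infty$ are required to meet a cofinal family of regions, so the design must be tuned. I would take $X$ as $\omega^{<\omega}$-like levels, so that the non-isolated points are dense and their neighbourhoods are coded by functions.

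\textbf{Step 3: Fréchet.} Suppose $\infty\in\overline{A}$. Then $A$ is not contained, modulo the ideal, in any $D_\alpha$. We need to extract a sequence from $A$ converging to $\infty$, that is, a sequence eventually outside every $D_\alpha$. Since the scale is unbounded and well ordered, take a function $g$ with $A$ containing points above $g$ on infinitely many columns. Choosing one point per column gives a candidate sequence. It converges exactly when it is eventually above every $f_\alpha$, and ensuring that is the main obstacle: a single point per column beats each $f_\alpha$ only mod finite if $A$ is not bounded by any scale member on an infinite set. I would handle this using the fact that a $\mathfrak{b}$-scale has no upper bound on any infinite set. Thus if $A$ is unbounded on $X'\subseteq\omega$ with respect to all $f_\alpha$ but dominated on every infinite subset by some $f_\alpha$, we can diagonalize along the well ordering (the cofinality of $\mathfrak{b}$ is regular) to obtain a contradiction. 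This argument is the hardest part, and I expect it to require the refined level structure together with the regularity of $\mathfrak{b}$.
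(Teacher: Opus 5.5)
There is a genuine gap. The space you actually specify in Step~1 fails both required properties, and the ``refined variant'' that might repair it is never defined.

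\emph{$\pi$-weight.} Every point of $\omega\times\omega$ is isolated. Since $\infty$ is not isolated, $\omega\times\omega$ is dense. Hence the singletons $\{x\}$, $x\in\omega\times\omega$, form a countable $\pi$-base, so the $\pi$-weight is $\omega$, whatever ideal the sets $F$ range over. This is also why Step~2 cannot work: the ``regions under $f_\beta$'' consist of isolated points. Points of $\pi$-character $\geq\mathfrak{b}$ require a space with no isolated points at all.

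\emph{Fr\'{e}chet.} With $F$ ranging over finite unions of columns, as you suggest, $\infty$ is not the limit of any sequence from $\omega\times\omega$. Suppose $S\subseteq\omega\times\omega$ converged to $\infty$. Then $S$ meets each column in a finite set, and $S\cap D_\alpha$ is finite for every $\alpha$. So the infinite partial function $g(k)=\min\{m\mid (k,m)\in S\}$ satisfies $f_\alpha\leq^{\ast}g$ on its domain for all $\alpha<\mathfrak{b}$, contradicting Lemma~\ref{escala para parciales}. Since $\infty\in\overline{\omega\times\omega}$, the space is not Fr\'{e}chet. The fact you plan to use in Step~3 (a $\mathfrak{b}$-scale is unbounded on every infinite set) is exactly what rules out a sequence ``eventually above every $f_\alpha$''. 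No diagonalization along $\mathfrak{b}$ can produce one. If instead $F$ ranges only over finite sets and $\mathcal{B}$ is dominating, you get the sequential fan, which is Fr\'{e}chet but again has countable $\pi$-weight.

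The missing ideas are those of Dow's construction as reviewed in the paper.
\begin{enumerate}
\item The points are the nodes of $\omega^{<\omega}$, and the scale consists of increasing functions $f:\omega^{<\omega}\to\omega$.
\item $\tau_{\mathcal{B}}$ is generated by cones, cocones and the wide trees $U(f)$, which at each node $s$ omit only the successor $s^{\frown}f(s)$.
\item Because each $U(f)$ removes just one successor per node, the sequence $(s^{\frown}n)_{n\in\omega}$ converges to $s$, and there are no isolated points.
\item The unboundedness of the scale gives every point $\pi$-character $\mathfrak{b}$, rather than obstructing convergence.
\end{enumerate}
Fr\'{e}chetness is \emph{not} obtained by extracting sequences inside $\tau_{\mathcal{B}}$. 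The paper shows (Section~\ref{Sec todas no}) that it is consistent that no Dow space of a $\mathfrak{b}$-scale is Fr\'{e}chet, so a direct ZFC argument of the kind you sketch must fail for such a topology. Instead, Dow proves $(A^{(1)})^{(1)}=A^{(1)}$. This makes $A^{(1)}$ the closure of $A$ in the sequential modification $\sigma_{\mathcal{B}}$, which has the same convergent sequences as $\tau_{\mathcal{B}}$, so $\sigma_{\mathcal{B}}$ is Fr\'{e}chet. It remains zero-dimensional and $\uparrow$-sequential, and Theorem~\ref{Teorema prop Dow}(6) then gives $\pi$-character, hence $\pi$-weight, at least $\mathfrak{b}$. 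Your proposal has neither the tree structure without isolated points nor the passage to the sequential modification, and these are the two steps that carry the theorem.
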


Not only is this result impressive, but its proof is highly ingenious and
illuminating. Given a $\mathfrak{b}$-scale $\mathcal{B}$, Dow defined a
topological space $\mathbb{D}(\mathcal{B})=(\omega^{<\omega},\mathbb{\tau
}_{\mathcal{B}})$ (which we call the \emph{Dow space of} $\mathcal{B}$) that
is zero dimensional and whose $\pi$-weight is exactly the bounding number
$\mathfrak{b}.$ Dow then proceeds to take a sequential modification to obtain
a topology $\sigma_{\mathcal{B}}$ extending $\tau_{\mathcal{B}}$ such that
$\mathbb{D}_{1}(\mathcal{B})=(\omega^{<\omega},\mathbb{\sigma}_{\mathcal{B}})$
is Fr\'{e}chet, zero dimensional and its $\pi$-weight is at least
$\mathfrak{b}$ (the exact $\pi$-weight is currently unknown). Evidently, the
advantage of $\mathbb{D}_{1}(\mathcal{B})$ over $\mathbb{D}(\mathcal{B})$ is
that it is Fr\'{e}chet. Nevertheless, the original space $\mathbb{D}%
(\mathcal{B})$ has certain advantages over $\mathbb{D}_{1}(\mathcal{B}).$
Mainly, the open sets of $\mathbb{D}(\mathcal{B})$ admit a very nice
combinatorial description and are intuitive to work with, but the same is no
longer true for $\mathbb{D}_{1}(\mathcal{B}).$ It is then natural to ask the
following question:\smallskip
\begin{center}%
\begin{tabular}
[c]{l}%
\textsf{Can the Dow space of a scale be a Fr\'{e}chet space?}%
\end{tabular}
\smallskip
\end{center}

In other words, we wanted to know if taking the sequential modification is
really needed. We will say that a $\mathfrak{b}$-scale is \emph{Fr\'{e}chet
}if its Dow space is Fr\'{e}chet. In this article, we will prove the following result:

\begin{theorem}
Both of the following statements are consistent (but not at the same time)
with \textsf{ZFC:}

\begin{enumerate}
\item Every $\mathfrak{b}$-scale is Fr\'{e}chet.

\item No $\mathfrak{b}$-scale is Fr\'{e}chet.
\end{enumerate}
\end{theorem}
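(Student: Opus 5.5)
We have not yet seen the definition of the Dow space, so the plan has to be stated in terms of the combinatorial structure we expect. Recall Dow's construction: a $\mathfrak{b}$-scale $\mathcal{B}=\{f_\alpha:\alpha<\mathfrak{b}\}$ is an unbounded, $\leq^*$-increasing family of functions in $\omega^\omega$. The topology $\tau_{\mathcal{B}}$ on $\omega^{<\omega}$ is generated by declaring clopen the sets determined by the functions in $\mathcal{B}$, roughly the sets of nodes lying below the graph of some $f_\alpha$ from some level on. We will first translate the Fréchet property of $\mathbb{D}(\mathcal{B})$ into a statement about ideals on countable sets. Take a point $s\in\omega^{<\omega}$ and a set $A$ with $s\in\overline{A}$. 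The neighbourhood trace at $s$ induces an ideal $\mathcal{I}_s$ on $A$, and Fréchet at $s$ means that every $\mathcal{I}_s$-positive set contains an infinite subset all of whose infinite subsets are still positive, that is, a sequence converging to $s$. The first step is to compute these ideals explicitly, in terms of the functions of $\mathcal{B}$ and the combinatorics of $\omega^{<\omega}$. We expect each $\mathcal{I}_s$ to be generated by $\mathfrak{b}$ many sets, namely the traces of the complements of basic clopen sets.

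For the consistency of (1), we would use a model in which $\mathfrak{b}<\mathfrak{p}$ fails in our favour. More precisely, the plan is to show that if $\mathfrak{b}<\mathfrak{s}$, or perhaps $\mathfrak{b}<\mathfrak{p}$, is not the right hypothesis, then $\mathfrak{b}=\mathfrak{d}$ together with some splitting or tower condition is. The cleanest candidate is $\omega_1=\mathfrak{b}<\mathfrak{s}$, as in the Laver or Mathias model variants. Alternatively, since any countable space of character $<\mathfrak{p}$ is Fréchet, a simpler route is the following. Each $\mathcal{I}_s$ is generated by $\mathfrak{b}$ sets. If $\mathfrak{b}<\mathfrak{p}$ were possible this would be immediate, but $\mathfrak{p}\leq\mathfrak{b}$ in ZFC, so that route fails. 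Instead we argue directly. Given a positive $A$, we use the scale structure to find a single level-by-level pattern inside $A$, for example a subset meeting each relevant "column" finitely. We then diagonalize against the $\mathfrak{b}$ generators, using a forcing axiom for small families, e.g. $\mathfrak{b}<\mathfrak{h}$ or $\mathfrak{b}=\omega_1<\mathfrak{h}$ style hypotheses, in the model obtained by adding Mathias reals over a model of CH. The key step is to show that the relevant trace of generators has a pseudo-intersection-type selector below $\mathfrak{h}$.

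For (2), we would work under CH, or under $\mathfrak{b}=\mathfrak{c}$ plus a diamond-like principle, or more robustly in the Cohen model. There we want to construct, for every scale, a point and a positive set $A$ with no converging sequence. In the Cohen model every scale has length $\omega_1$, and every infinite $B\subseteq A$ is added at some countable stage. Genericity of the later Cohen reals, which the scale must eventually dominate modulo finite, will then put a basic neighbourhood missing infinitely much of $B$. So we choose $A$ as a canonical set, for instance all nodes of a fixed shape, and show that for any infinite $B\subseteq A$ appearing in an intermediate model, some $f_\alpha$ with $\alpha$ beyond that stage witnesses non-convergence. For this we need a lemma that each $f_\alpha$ eventually behaves generically over small submodels, using a $\sigma$-centered or Cohen-absoluteness argument.

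The main obstacle is the consistency of (1): proving that every positive set for the ideals $\mathcal{I}_s$ contains a converging sequence, uniformly for all $\mathfrak{b}$-scales. We expect the key is a Katětov-order characterization: the Dow space is Fréchet iff some specific ideal attached to $\mathcal{B}$ is not Katětov above $\mathrm{conv}$, or is Fréchet-type. We would then aim to make every such ideal satisfy this in a model built by a countable support iteration of proper forcings that preserve $\omega^\omega$-boundedness of the ground scale while killing every potential counterexample.
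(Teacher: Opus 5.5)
\textbf{There are genuine gaps in both halves.}

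\textbf{Statement (2).} The models you propose are models of (1). Under CH, and in the Cohen model, we have $\mathfrak{p}=\mathfrak{b}=\omega_1$; in the Cohen model this holds because $\omega_1\le\mathfrak{p}\le\mathfrak{b}=\omega_1$. The paper proves that $\mathfrak{p}=\mathfrak{b}$ implies the Dow space of every $\mathfrak{b}$-scale is even Fr\'{e}chet-Urysohn for finite sets. So any model of (2) must satisfy $\mathfrak{p}<\mathfrak{b}$. Your genericity step also fails: in the Cohen model $\mathfrak{d}=\mathfrak{c}$, so no $\omega_1$-sequence dominates the later Cohen reals.

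A model of (2) has to actively destroy the Fr\'{e}chet property of every candidate scale. The paper does this as follows.
\begin{enumerate}
\item It runs a finite support iteration of length $\omega_2$ over CH $+\Diamond(S_{\omega_1}(\omega_2))$, using Hechler forcing and, at guessed stages, $\mathbb{L}(\mathsf{nwd}(\mathbb{D}(\mathcal{B}_\alpha))^{\ast})$ for an initial segment $\mathcal{B}_\alpha$ that is dominating and Fr\'{e}chet.
\item This Laver forcing adds a dense set $A_{\text{\textsf{gen}}}$ on which $\mathcal{I}(\emptyset)$ is $\omega$-hitting. Every iterand strongly preserves $\omega$-hitting families, so this survives the iteration.
\item The same preservation kills towers of length $\omega_2$. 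By Baumgartner--Dordal every $\mathfrak{b}$-scale is then dominating, which is what makes the guessed initial segments dominating.
\item The hard step is showing that $A_{\text{\textsf{gen}}}$ still accumulates at $\emptyset$ after the scale is extended by functions dominating over $V[G_\alpha]$. This is done by the pure-preference argument over the $\sigma$-filtered quotient, using the Fr\'{e}chet property of $\mathcal{B}_\alpha$ through weak selectivity of $\mathsf{nwd}$.
\end{enumerate}
None of these ingredients appears in your plan.

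\textbf{Statement (1).} You discard the $\mathfrak{p}$ route because $\mathfrak{p}\le\mathfrak{b}$. You then turn to hypotheses like $\mathfrak{b}<\mathfrak{h}$, which is inconsistent since $\mathfrak{h}\le\mathfrak{b}$ in ZFC, or to an unspecified proper iteration. The missing idea is that $\mathfrak{p}=\mathfrak{b}$ already suffices. Ironically, this means any of your $\mathfrak{b}=\omega_1$ scenarios, such as CH, is a model of (1).

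The argument goes like this.
\begin{enumerate}
\item Given $X\in(\mathcal{N}(\emptyset)^{<\omega})^{+}$, take an elementary submodel $M$ of size $<\mathfrak{b}$ containing $X$ and $\mathcal{B}$, and set $\delta=M\cap\mathfrak{b}$.
\item Only the fewer than $\mathfrak{p}$ many sets $U(f_\alpha)$ with $\alpha<\delta$ must be diagonalized. This is done with a $\sigma$-centered poset and Bell's theorem.
\item The whole tail $\alpha\ge\delta$ is handled at once. Require the $n$-th chosen element of $X$ to consist of nodes $s$ with $s(0)>n$ and $s\in\triangle_{f_\delta((n))}$.
\item Such choices exist for infinitely many $n$, because $f_\delta$ is unbounded over $M$.
\item For almost all $n$ these choices lie in every later $U(f_\alpha)$. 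This uses that $f_\delta\le^{\ast}f_\alpha$, that $f_\alpha$ is increasing, and that $s\in U(f)$ implies $s^{\frown}\triangle_{f(s)}\subseteq U(f)$.
\end{enumerate}
Your guessed description of the topology (nodes below the graph of some $f_\alpha$) misses exactly the feature of $U(f)$ that makes this reduction from $\mathfrak{b}$ generators to fewer than $\mathfrak{b}$ possible: $U(f)$ omits only the single successor $s^{\frown}f(s)$ of each node.
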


 We finish the paper with an application to the
Kat\v{e}tov order on ideals and prove that the Category Dichotomy of
\cite{KatetovOrderonBorelIdeals} is true for all co-analytic ideals.

The structure of the paper is as follows. Section \ref{Secion notacion} covers
the necessary notation and definitions that will be used throughout the paper.
The next five sections provide the required background on cardinal invariants
of the continuum, filters and ideals, topology, and forcing that will be needed
in the article. The reader may skip these preliminary sections and return to
them as needed. The definition of the Dow space of a $\mathfrak{b}$-scale from
\cite{PiWeightandFrechetProperty} is reviewed in Section \ref{DowSpace}. In
Section \ref{Sec p=b} we prove that $\mathfrak{p=b}$ implies that all
$\mathfrak{b}$-scales are Fr\'{e}chet. In Section \ref{Sec una no} we produce
a model where there is a non-Fr\'{e}chet $\mathfrak{b}$-scale and in Section
\ref{Sec todas no} we build a model where no Fr\'{e}chet $\mathfrak{b}$-scales
exist. In Section \ref{CAT} we use results from the previous sections to find
a consistent example of a $\triangle_{2}^{1}$ ideal that does not satisfy the
Category Dichotomy and prove that this is the least possible complexity.
Section \ref{Sec pregunts} contains open questions that we do not know how to answer.
\newline

It is worth pointing out that countable, Fr\'{e}chet spaces with uncountable
$\pi$-weight have received a lot of attention recently, as can be seen from
the recently published papers \cite{NewExamplesSelectivelySeparable} and
\cite{SomeResultsPiWeightFrechet}.

\section{Notation \label{Secion notacion}}

Our notation is basically standard and follows \cite{Kunen} and \cite{Jech}.
Given $s,t\in\omega^{<\omega}$ by $s^{\frown}t$ we denote the
\emph{concatenation} of $s$ and $t.$ We write $s^{\frown}n$ instead of
$s^{\frown}\left(  n\right)  .$ If $F\subseteq\omega^{<\omega},$ define
$s^{\frown}F=\left\{  s^{\frown}z\mid z\in F\right\}  .$ We say that
$T\subseteq\omega^{<\omega}$ is a \emph{tree }if it is closed under taking
initial segments. For $s\in T,$ define \textsf{suc}$_{T}\left(  s\right)
=\left\{  n\mid s^{\frown}n\in T\right\}  $. We say $s$ \emph{is the stem of
}$T$ (denoted by $s=$ \textsf{st}$\left(  T\right)  $) if every node of $T$ is
comparable with $s$ and $s$ is maximal with this property. \ The domain of a
function $f$ is denoted by \textsf{dom}$\left(  f\right)  $ and its image by
\textsf{im}$\left(  f\right)  .$ By $f$ $;X\longrightarrow Y$ we mean that $f$
is a partial function from $X$ to $Y$ (i.e., \textsf{dom}$\left(  f\right)
\subseteq X$). \ The expression \textquotedblleft for almost
all\textquotedblright\ means \textquotedblleft for all except finitely
many\textquotedblright.

If $\kappa$ is a cardinal, define \textsf{H}$\left(  \kappa\right)  $ as the
set consisting of all sets whose transitive closure has size less than
$\kappa.$ We will work with elementary submodels of these sets. For an
introduction to this very important technique, we refer the reader to the
survey \cite{AlanSubmodelos}.

We will need the concepts of $F_{\sigma},$ $G_{\delta},$ Borel, analytic,
co-analytic and projective subsets of a Polish space, which can be consulted in
\cite{Kechris} or \cite{BorelSrivastava}.

\section{Preliminaries on cardinal invariants of the continuum}

\emph{Cardinal invariants of the continuum} play an important role in this
article. We now review some basic definitions that will be needed. To learn
more, see \cite{HandbookBlass} and \cite{TheIntegersandTopology}. For
$f,g\in\omega^{\omega}$, define $f\leq^{\ast}g$ if $f\left(  n\right)  \leq
g\left(  n\right)  $ holds for almost all $n\in\omega$. We say a family
$\mathcal{B}\subseteq\omega^{\omega}$ is \emph{unbounded }if $\mathcal{B}$ is
unbounded with respect to $\leq^{\ast}.$ A family $\mathcal{D}\subseteq
\omega^{\omega}$ is a \emph{dominating family }if for every $f\in
\omega^{\omega},$ there is $g\in\mathcal{D}$ such that $f\leq^{\ast}g.$ The
\emph{bounding number }$\mathfrak{b}$ is the size of the smallest unbounded
family and the \emph{dominating number }$\mathfrak{d}$ is the smallest size of
a dominating family. We say that $\mathcal{B=}\left\{  f_{\alpha}\mid\alpha
\in\mathfrak{b}\right\}  \subseteq\omega^{\omega}$ is a $\mathfrak{b}%
$\emph{-scale} if every $f_{\alpha}$ is an increasing function, $\mathcal{B}$
is unbounded and $f_{\alpha}<^{\ast}f_{\beta}$ whenever $\alpha<\beta.$ A
\emph{scale }is a dominating $\mathfrak{b}$-scale. It is not hard to see that
the existence of a scale is equivalent to $\mathfrak{b=d}.$

The order $\leq^{\ast}$ can be extended to partial functions. Given $f,g$
$;\omega\longrightarrow\omega$ define $f\leq^{\ast}g$ if $f\left(  n\right)
\leq g\left(  n\right)  $ holds for almost all $n$ in their common domain. It
is well-known that $\mathfrak{b}$-scales are not only unbounded with respect
to total functions, but also with respect to infinite partial functions. See \cite[Fact 3.4]{TheIntegersandTopology} for the proof of the following lemma:

\begin{lemma}
Let $\mathcal{B}\subseteq\omega^{\omega}$ be a $\mathfrak{b}$-scale and $g;\omega\longrightarrow\omega$ an infinite partial function. There is
$f\in\mathcal{B}$ such that $f\nleq^{\ast}g$. \label{escala para parciales}
\end{lemma}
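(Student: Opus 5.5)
We argue by contradiction. Suppose $g$ is an infinite partial function and $f\leq^{\ast}g$ for every $f\in\mathcal{B}$. Let $A=\textsf{dom}(g)$, which is infinite, and enumerate it increasingly as $A=\{a_{0}<a_{1}<\cdots\}$. The plan is to turn $g$ into a total function $h\in\omega^{\omega}$ that $\leq^{\ast}$-bounds all of $\mathcal{B}$, which contradicts the unboundedness of the $\mathfrak{b}$-scale. The one property of the scale we need beyond unboundedness is that every $f_{\alpha}$ is increasing; this is what lets us pass from values on $A$ to values everywhere.

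Define $h(n)=g(a_{k})$, where $k$ is least with $a_{k}\geq n$; that is, $h(n)=g(\min(A\setminus n))$. Fix $f\in\mathcal{B}$ and choose $N$ with $f(a)\leq g(a)$ for all $a\in A$ with $a\geq N$. Take $n\geq N$ and let $a=\min(A\setminus n)$. Then $a\geq n\geq N$, so monotonicity of $f$ gives
\[
f(n)\leq f(a)\leq g(a)=h(n).
\]
Hence $f\leq^{\ast}h$ for every $f\in\mathcal{B}$. So $\mathcal{B}$ is bounded, contradicting the definition of a $\mathfrak{b}$-scale. Therefore some $f\in\mathcal{B}$ satisfies $f\nleq^{\ast}g$.

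The only point that needs care is that $f(n)\leq f(a)$ requires $f$ to be nondecreasing, and this is built into the definition of $\mathfrak{b}$-scale. Note that the argument never uses the well-ordering $f_{\alpha}<^{\ast}f_{\beta}$ or the length $\mathfrak{b}$; unboundedness together with monotonicity suffices. I do not expect any step to be a genuine obstacle.
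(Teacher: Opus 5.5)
Your proof is correct. The total function $h(n)=g(\min(\textsf{dom}(g)\setminus n))$ $\leq^{\ast}$-bounds every increasing $f$ with $f\leq^{\ast}g$, so unboundedness of $\mathcal{B}$ gives the contradiction, and you are right that only monotonicity and unboundedness are used. The paper gives no proof of its own and only cites \cite[Fact 3.4]{TheIntegersandTopology}; your argument is the standard proof of this fact.
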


Let $A$ and $B$ be two subsets of $\omega.$ Define $A\subseteq^{\ast}B$ ($A$
\emph{is an almost subset of} $B$) if $A\setminus B$ is finite. For
$\mathcal{H\subseteq}$ $\left[  \omega\right]  ^{\omega}$ and $A,B\subseteq
\omega,$ we say that $A\ $\emph{is a pseudo-intersection of }$\mathcal{H}$ if
it is almost contained in every element of $\mathcal{H}.$ A family
$\mathcal{P}\subseteq\left[  \omega\right]  ^{\omega}$ is \emph{centered }if
the intersection of finitely many of its elements is infinite. The
pseudo-intersection number $\mathfrak{p}$ is the least size of a centered
family without an infinite pseudo-intersection. We say $\mathcal{T}=\left\{
A_{\alpha}\mid\alpha<\kappa\right\}  $ is a \emph{pre-tower }if it is
$\subseteq^{\ast}$-decreasing and it is a \emph{tower }it has no infinite
pseudo-intersection. The \emph{tower number} $\mathfrak{t}$ is the least length
of a tower. By $\mathfrak{c}$ we denote the size of the real numbers. It is
easy to see that $\mathfrak{p\leq t\leq b\leq d\leq c}.$ Moreover, an
impressive theorem of Malliaris and Shelah establishes that $\mathfrak{p=t}$
(see \cite{cofinalityspectrum}). The following is essentially 
\cite[Theorem 4.2]{BaumgartnerDordalAdjoining}.

\begin{proposition}
[Baumgartner, Dordal \cite{BaumgartnerDordalAdjoining}] If there are no towers
of length $\mathfrak{b},$ then every $\mathfrak{b}$-scale is a scale (so
$\mathfrak{b=d}$). \label{BaumgartnerDordal}
\end{proposition}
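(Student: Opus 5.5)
We prove the contrapositive: if $\mathcal{B}=\{f_\alpha\mid\alpha<\mathfrak{b}\}$ is a $\mathfrak{b}$-scale that is not dominating, then there is a tower of length $\mathfrak{b}$. Fix $g\in\omega^\omega$ not dominated by any member of $\mathcal{B}$; we may assume $g$ is increasing. For each $\alpha<\mathfrak{b}$ put
\[
A_\alpha=\{n\in\omega\mid f_\alpha(n)<g(n)\}.
\]
Since $f_\alpha\not\geq^{\ast}g$, each $A_\alpha$ is infinite. If $\alpha<\beta$ then $f_\alpha<^{\ast}f_\beta$, so for almost all $n$ the inequality $f_\beta(n)<g(n)$ implies $f_\alpha(n)<g(n)$. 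Hence $A_\beta\subseteq^{\ast}A_\alpha$, and $\{A_\alpha\mid\alpha<\mathfrak{b}\}$ is a pre-tower of length $\mathfrak{b}$.

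It remains to show that this pre-tower has no infinite pseudo-intersection. Suppose $X\in[\omega]^\omega$ satisfies $X\subseteq^{\ast}A_\alpha$ for every $\alpha$. Then $g\restriction X$ is an infinite partial function, and for each $\alpha$ we have $f_\alpha(n)<g(n)$ for almost all $n\in X$, that is, $f_\alpha\leq^{\ast}g\restriction X$. This contradicts Lemma \ref{escala para parciales}, which provides some $f\in\mathcal{B}$ with $f\nleq^{\ast}g\restriction X$. So $\{A_\alpha\mid\alpha<\mathfrak{b}\}$ is a tower of length $\mathfrak{b}$.

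This settles the contrapositive. If there are no towers of length $\mathfrak{b}$, every $\mathfrak{b}$-scale is dominating, hence a scale, and its existence gives $\mathfrak{d}\leq\mathfrak{b}$, so $\mathfrak{b}=\mathfrak{d}$. The only step needing real input is the no-pseudo-intersection step, and it reduces directly to Lemma \ref{escala para parciales} about partial functions.
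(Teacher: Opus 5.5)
Your proposal is correct and follows essentially the same argument as the paper. You use the same pre-tower $A_\alpha=\{n\mid f_\alpha(n)<g(n)\}$ and derive the same contradiction with Lemma \ref{escala para parciales} from a pseudo-intersection $X$, merely phrased as a contrapositive; the extra assumption that $g$ is increasing is harmless but unnecessary.
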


\begin{proof}
Assume that there is a $\mathfrak{b}$-scale $\mathcal{B}=\left\{  f_{\alpha}%
\mid\alpha\in\mathfrak{b}\right\}  $ that is not a dominating family. Let
$g\in\omega^{\omega}$ be not dominated by any function in $\mathcal{B}.$ It
follows that the set $A_{\alpha}=\left\{  n\mid f_{\alpha}\left(  n\right)
<g\left(  n\right)  \right\}  $ is infinite for every $\alpha<\mathfrak{b}$
and $\mathcal{T}=\left\{  A_{\alpha}\mid\alpha\in\mathfrak{b}\right\}  $ is a
pretower. Since there are no towers of length $\mathfrak{b},$ there is
$X\in\left[  \omega\right]  ^{\omega}$ which is a pseudointersection of
$\mathcal{T}.$ We get $f_{\alpha}\upharpoonright X\leq^{\ast
}g\upharpoonright X$ for every $\alpha<\mathfrak{b}$, which contradicts Lemma
\ref{escala para parciales}.
\end{proof}

We will need the following result in Section \ref{Sec p=b}.

\begin{lemma}
Let $\mathcal{B}=\left\{  f_{\alpha}\mid\alpha\in\mathfrak{b}\right\}  $ be a
$\mathfrak{b}$-scale and $M$ an elementary submodel of \textsf{H}$\left(
\kappa\right)  $ (for some large enough regular cardinal $\kappa$) such that
$\mathcal{B}\in M,$ the size of $M$ is less than $\mathfrak{b}$ and
$\delta=M\cap\mathfrak{b\in b}.$ If $g; \omega\longrightarrow\omega$ is an
infinite partial function in $M,$ then $f_{\delta}\nleq^{\ast}g.$
\label{no acotado modelo}
\end{lemma}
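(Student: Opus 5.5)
We argue by elementarity, using Lemma \ref{escala para parciales} inside $M$. Since $g\in M$ is an infinite partial function and $\mathcal{B}\in M$, Lemma \ref{escala para parciales} holds in $\mathsf{H}(\kappa)$. So $\mathsf{H}(\kappa)$ satisfies ``there is $\alpha<\mathfrak{b}$ with $f_\alpha\nleq^{\ast}g$.'' Because $g$, $\mathcal{B}$ and $\mathfrak{b}$ are definable from parameters in $M$, elementarity gives such an $\alpha$ in $M$. Then $\alpha\in M\cap\mathfrak{b}=\delta$, so $\alpha<\delta$. This $\alpha$ is only a starting point: the real work is to transfer the failure of $f_\alpha\le^* g$ up to $f_\delta$.

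The key observation is that $\mathcal{B}$ is $<^{\ast}$-increasing, so $f_\alpha<^{\ast}f_\delta$ for every $\alpha<\delta$. Suppose towards a contradiction that $f_\delta\le^{\ast}g$. Then for every $\alpha<\delta$ we get $f_\alpha\le^{\ast}f_\delta\le^{\ast}g$ on $\mathsf{dom}(g)$. Note that $f_\delta\le^* g$ means the inequality holds for almost all $n\in\mathsf{dom}(g)$, and composing almost-everywhere inequalities with the total function $f_\delta$ in the middle is fine. Hence every $f_\alpha$ with $\alpha\in M\cap\mathfrak{b}$ satisfies $f_\alpha\le^* g$. This contradicts the witness $\alpha\in M\cap\mathfrak b$ produced in the first paragraph.

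The only delicate points are bookkeeping. First, we must check that $\delta=M\cap\mathfrak{b}$ is an ordinal, which the hypothesis grants, and that every element of $M\cap\mathfrak{b}$ lies below $\delta$. Second, we must ensure the existential statement is absolute between $\mathsf{H}(\kappa)$ and $V$. The relation $\le^{\ast}$ between partial functions is arithmetic in the parameters, so it is computed the same way in $\mathsf{H}(\kappa)$ for large enough regular $\kappa$. Lemma \ref{escala para parciales} holding in $V$ then gives it in $\mathsf{H}(\kappa)$. I expect no step to be a real obstacle; the mild subtlety is the transitivity of $\le^{\ast}$ for partial versus total functions, handled above.
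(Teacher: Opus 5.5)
Your proposal is correct and matches the paper's proof. Both use Lemma \ref{escala para parciales} with elementarity to get a witness $\alpha\in M\cap\mathfrak{b}=\delta$ with $f_{\alpha}\nleq^{\ast}g$, and then use $f_{\alpha}\leq^{\ast}f_{\delta}$ to conclude $f_{\delta}\nleq^{\ast}g$. Your contradiction framing and your remarks on absoluteness and on chaining $\leq^{\ast}$ through the total function $f_{\delta}$ only make explicit what the paper leaves implicit.
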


\begin{proof}
By Lemma \ref{escala para parciales}, there is $\alpha<\mathfrak{b}$ such that
$f_{\alpha}\nleq^{\ast}g.$ Moreover, by elementarity, we may assume that
$\alpha<\delta.$ Since $f_{\alpha}\leq^{\ast}f_{\delta},$ the result follows.
\end{proof}

Let $M$ be a model of \textsf{ZFC }and $f\in\omega^{\omega}.$ We say that $f$
\emph{is dominating (unbounded) over} $M$ if for every $g\in M\cap
\omega^{\omega},$ it is the case that $g\leq^{\ast}f$ ($f\nleq^{\ast}g$).

\section{Preliminaries on filters and ideals}

We denote the power set of a set $X$ by $\mathcal{P}\left(  X\right)  $.
$\mathcal{I}\subseteq\mathcal{P}\left(  X\right)  $ is an \emph{ideal on }$X$
if $\emptyset\in\mathcal{I}$ and $X\notin\mathcal{I},$ for every $A,B\subseteq
X,$ if $A\in\mathcal{I}$ and $B\subseteq A$ then $B\in\mathcal{I}$ and if
$A,B\in\mathcal{I}$ then $A\cup B\in\mathcal{I}.$ A family
$\mathcal{F\subseteq}$ $\wp\left(  X\right)  $ is a called a \emph{filter on
}$X$ if $X\in\mathcal{F}$ and $\emptyset\notin\mathcal{F},$ for every
$A,B\subseteq X,$ if $A\in\mathcal{F}$ and $A\subseteq B$ then $B\in
\mathcal{F}$ and if $A,B\in\mathcal{F}$ then $A\cap B\in\mathcal{F}.$ Given a
family $\mathcal{B}$ of subsets of $X,$ we define $\mathcal{B}^{\ast}=\left\{
X\setminus B\mid B\in\mathcal{B}\right\}  .$ Note that if $\mathcal{F}$ is a
filter, then $\mathcal{F}^{\ast}$ is an ideal (called the \emph{dual ideal of
}$\mathcal{F}$) and if $\mathcal{I}$ is an ideal, then $\mathcal{I}^{\ast}$ is
a filter (called the dual filter of $\mathcal{I}$). Let $\mathcal{I}$ be an
ideal on $X$. The collection of $\mathcal{I}$\emph{-positive sets }is
$\mathcal{I}^{+}=\wp\left(  X\right)  \smallsetminus\mathcal{I}$ \emph{. }If
$\mathcal{F}$ is a filter, we define $\mathcal{F}^{+}$ $\mathcal{=}$ $\left(
\mathcal{F}^{\ast}\right)  ^{+}.$ It is easy to see that $\mathcal{F}^{+}$ is
the family of all sets that intersects every member of $\mathcal{F}.$ If
$A\in\mathcal{I}^{+}$ then \emph{the restriction of }$\mathcal{I}$ to $A$,
defined as $\mathcal{I\upharpoonright}A=\wp\left(  A\right)  \cap\mathcal{I}$,
is an ideal on $A.$ By $\mathcal{I}^{\perp}$ we denote the set of all sets
that have finite intersection with every member of $\mathcal{I}.$ It is easy
to see that $\mathcal{I}^{\perp}$ is an ideal. We review some properties of
ideals that will be needed in this article.

\begin{definition}
Let $\mathcal{I}$ be an ideal on $\omega$ (or any countable set).

\begin{enumerate}
\item $\mathcal{I}$ is \emph{tall }if for every $X\in\left[  \omega\right]
^{\omega}$ there is $Y\in\mathcal{I}$ such that $Y\cap X$ is infinite.

\item $\mathcal{I}$ is a \emph{Fr\'{e}chet ideal} if for every $A\in
\mathcal{I}^{+},$ there is $B\in\left[  A\right]  ^{\omega}\cap\mathcal{I}%
^{\perp}.$

\item $\mathcal{I}$ is $\omega$\emph{-hitting }if for every $\left\{
X_{n}\mid n\in\omega\right\}  \subseteq\left[  \omega\right]  ^{\omega}$ there
is $Y\in\mathcal{I}$ such that $Y\cap X_{n}$ is infinite for every $n\in
\omega.$

\item $\mathcal{I}$ is \emph{weakly selective }if for every $X\in
\mathcal{I}^{+}$ and $\mathcal{P}$ a partition of $X$ either $\mathcal{P\not\subseteq
I}$ or $\mathcal{P}$ has a (partial) selector in $\mathcal{I}%
^{+}$ (that is, a set that intersects each element of
$\mathcal{P}$ in no more than one point).
\end{enumerate}
\end{definition}

Let $\mathcal{F}$ be a filter on a set $W$. The filter $\mathcal{F}^{<\omega}$
is the filter on $[W]^{<\omega}\setminus\left\{  \emptyset\right\}  $ generated
by $\left\{  \left[  A\right]  ^{<\omega}\setminus\left\{  \emptyset\right\}
\mid A\in\mathcal{F}\right\}  .$ Note that a set $X$ is in $\left(
\mathcal{F}^{<\omega}\right)  ^{+}$ if and only if every $A\in\mathcal{F}$
contains an element of $X$.

\begin{definition}
Let $\mathcal{F}$ be a filter. We say that $\mathcal{F}$ is a \emph{FUF filter
}if for every $X\in\left(  \mathcal{F}^{<\omega}\right)  ^{+}$ there is
$Y\in\left[  X\right]  ^{\omega}$ such that every $F\in\mathcal{F}$ contains
almost all elements of $Y.$
\end{definition}

In this way, $\mathcal{F}$ is a \textsf{FUF }filter if and only if
$(\mathcal{F}^{<\omega})^{\ast}$ is a Fr\'{e}chet ideal. It is not hard to see
that every countably generated filter is a \textsf{FUF} filter. Gruenhage and
Szeptycki asked if there was a \textsf{ZFC }example of a \textsf{FUF} filter
on $\omega$ that is not countably generated. In \cite{CountableFrechetGroups}
it was proved that it is consistent that every \textsf{FUF }filter generated
by less than $\mathfrak{c}$ many elements is countably generated and the main
theorem of \cite{MalykhinProblem} implies that all \textsf{FUF }filters are
countably generated.

We now review the Kat\v{e}tov order, which will be needed in Section \ref{CAT}.

\begin{definition}
Let $X,Y$ be two sets, $\mathcal{I}$ an ideal on $X$, $\mathcal{J}$ an ideal
on $Y$ and $f:X\longrightarrow Y.$

\begin{enumerate}
\item $f$ is a \emph{Kat\v{e}tov function} from $\mathcal{I}$ to $\mathcal{J}$
if for every $A\subseteq Y$, we have that if $A\in\mathcal{J},$ then
$f^{-1}\left(  A\right)  \in\mathcal{I}.$

\item $\mathcal{J}\leq_{\text{\textsf{K}}}\mathcal{I}$ means that there is a
Kat\v{e}tov function from $\mathcal{I}$ to $\mathcal{J}.$
\end{enumerate}
\end{definition}

The \emph{eventually different} ideal $\mathcal{ED}$ is the ideal on
$\omega^{2}$ generated by the set of columns $\left\{  \left\{  n\right\}
\times\omega\mid n\in\omega\right\}  $ and the graphs of functions from
$\omega$ to $\omega.$ Fix $X$ a topological space and $N\subseteq X.$ We say
that $N$ is \emph{nowhere dense }if for every non-empty open set $U\subseteq
X,$ there is another open set $\emptyset\neq V\subseteq U$ such
that $V\cap N=\emptyset.$ By \textsf{nwd}$\left(  X\right)  $ we denote the
ideal of nowhere dense subsets of $X.$ By \textsf{nwd} we mean the ideal of
nowhere dense subsets of the rational numbers.

Readers interested in learning more about the Kat\v{e}tov order are encouraged
to see \cite{KatetovOrderonBorelIdeals}, \cite{OrderingMADFamiliesalaKatetov},
\cite{CombinatoricsofFiltersandIdeals}, \cite{StructuralKatetov},
\cite{ForcingIndestructibilityofMADFamilies}, \cite{KatetovOrderImply},
\cite{DicotomiaCat}, \cite{KatetovandKatetovBlassOrdersFsigmaIdeals},
\cite{SakaiKatetov}, \cite{KatetovMAD},
\cite{InvariancePropertiesofAlmostDisjointFamilies},
\cite{KwelaUltrafiltersKatetov}, \cite{KwelaExtendability} or
\cite{KatetovHindman} among others.

\section{Preliminaries on Topology}

We now review some basic topological concepts that will be used in the paper.
Let $X$ be a topological space\footnote{All spaces under discussion are
Hausdorff.} and $b\in X.$ Recall that $\left(  X,\tau\right)  $ is \emph{zero-dimensional }if it has a basis of clopen sets. For $A\subseteq X$ a
countable set, we say that $A$ \emph{converges to} $b$ (denoted by
$A\longrightarrow b$) if every open subset of $b$ almost contains $A.$ $X$ is
a \emph{Fr\'{e}chet space} if for every $a\in X$ and $Y\subseteq X$ such that
$a\in\overline{Y}\setminus Y,$ there is $A\in\left[  Y\right]  ^{\omega}$ that
converges to $a.$ Let $\mathcal{B}$ be a collection of non-empty open subsets of $X.$ We
say that $\mathcal{B}$ is a $\pi$\emph{-base }if every non-empty open subset of $X$
contains an element of $\mathcal{B}.$ The $\pi$\emph{-weight of} $X$ is the
smallest size of a $\pi$-base of $X.$ Moreover, $\mathcal{B}$ is a
\emph{local} $\pi$\emph{-base at }$b$ if every neighborhood of $b$ contains an
element of $\mathcal{B}.$ The $\pi$\emph{-character of} $b$ is the smallest
size of a \emph{local} $\pi$\emph{-base at }$b.$ We say that $X$ \emph{has
uncountable} $\pi$\emph{-character everywhere }if every point of $X$ has
uncountable $\pi$-character. Let $\varphi$ be a topological property. By
$X\models\varphi$ we mean that \textquotedblleft$X$ has property $\varphi
$\textquotedblright. We mainly use this notation when we are working with
several topological spaces with the same underlying set.

Let $X$ be a topological space and $a\in X.$ Denote by $\mathcal{N}_{X}\left(
a\right)  $ the neighborhood filter of $a$ and $\mathcal{I}_{X}\left(
a\right)  $ its dual ideal. Note that $\mathcal{I}_{X}\left(  a\right)
=\left\{  A\subseteq X\mid a\notin\overline{A}\right\}  .$ If there is no risk
of confusion, we simply write $\mathcal{N}\left(  a\right)  $ and
$\mathcal{I}\left(  a\right)  .$ Many topological properties at the point $a$
can be expressed as properties of its neighborhood filter and its dual ideal,
as shown in Table 1.

\begin{center}%
\begin{tabular}
[c]{|c|c|}\hline
\textbf{Topological } & \textbf{Combinatorial}\\
\textbf{property} & \textbf{translation}\\\hline
& \\
$a\in\overline{B}$ & $B\in\mathcal{N}\left(  a\right)  ^{+}$\\
& \\
$A\longrightarrow a$ & $A$ is a pseudointersection of $\mathcal{N}\left(
a\right)  $\\
& equivalently, $A\in\mathcal{I}\left(  a\right)  ^{\perp}$\\
& \\
$a$ is a Fr\'{e}chet point & $\mathcal{I}\left(  a\right)  $ is a Fr\'{e}chet
ideal\\
& \\
$a\in\overline{Y}$ but no sequence & $Y\in\mathcal{I}\left(  a\right)  ^{+}$
and $\mathcal{I}\left(  a\right)  \upharpoonright Y$ is\\
from $Y$ converges to $a$ & a tall ideal\\\hline
\end{tabular}

{\small Table 1. Topological properties and their translations}
\end{center}

We will need the following result in Section \ref{Sec todas no}, which is
Proposition 44 of \cite{DicotomiaCat}.

\begin{proposition}
Let $X$ be a countable Fr\'{e}chet space with no isolated points. The ideal
\textsf{nwd}$\left(  X\right)  $ is weakly selective. \label{nwd no arriba ED}
\end{proposition}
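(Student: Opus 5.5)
Fix $X\in\textsf{nwd}(X)^{+}$ and a partition $\mathcal{P}$ of $X$ with $\mathcal{P}\subseteq\textsf{nwd}(X)$. I will build a selector for $\mathcal{P}$ that is somewhere dense.

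\textbf{Reduction to a dense piece.} Since $X$ is not nowhere dense, there is a non-empty open $U$ with $X\cap U$ dense in $U$. Every point of $U$ is therefore in the closure of $X\cap U$. Since $X$ is countable, enumerate $U=\{u_n\mid n\in\omega\}$; in fact it suffices to handle a countable $\pi$-network-like family. My plan is to produce a selector $S$ with $u_n\in\overline{S}$ for every $n$, so that $S$ is dense in $U$ and hence not nowhere dense. Note that being dense in $U$ for a subset of $U$ only needs every point of $U$ in its closure. Points of $U$ are non-isolated, since $X$ has no isolated points.

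\textbf{Choosing sequences.} For each $n$, I want a sequence $A_n\subseteq X\cap U\setminus\{u_n\}$ converging to $u_n$ that meets each piece of $\mathcal{P}$ in only finitely many points.

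Since $u_n\in\overline{X\cap U\setminus\{u_n\}}$, the Fr\'echet property gives a sequence $B\to u_n$ from $X\cap U$. That sequence might lie inside a single piece $P$, so I must argue more carefully. Let $Y=X\cap U\setminus\{u_n\}$ and consider two cases.

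\emph{Case 1: $u_n\notin\overline{P\cap Y}$ for every $P$.} Choose a decreasing neighborhood-type enumeration and pick points $y_k\in Y$ from distinct pieces converging to $u_n$. Concretely, use the Fr\'echet property on a set designed to avoid finitely many pieces at a time: given pieces $P_0,\dots,P_{k-1}$ already used, $u_n\notin\overline{P_0\cup\dots\cup P_{k-1}}$, so $u_n\in\overline{Y\setminus(P_0\cup\dots\cup P_{k-1})}$. A diagonal construction picks a convergent sequence meeting each piece at most once. Make it precise by taking a convergent sequence $C_k\subseteq Y\setminus\bigcup_{i<k}P_i$ for each $k$. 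Because $X$ is countable, and Fr\'echet countable spaces need not be $\alpha_4$, I would instead argue directly: $Y'=\{y\}$-selection via the set $Z$ of points chosen greedily. This is the delicate step.

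\emph{Case 2: $u_n\in\overline{P\cap Y}$ for some $P$.} This case is fine: a sequence from one piece is allowed if, in the final selector, I use only one point of it. That is the real problem, so the selector should instead be taken from sequences that hit each piece at most once.

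\textbf{Main obstacle and the planned fix.} The key obstacle is converting density into a selector, since pieces are nowhere dense but can accumulate at points. I would use nowhere density crucially. Given $u_n$ and a piece $P$, the set $\overline{P}$ has empty interior. So any open $V\ni u_n$ contains a non-empty open $W\subseteq V\setminus\overline{P}$, and $W\cap X\neq\emptyset$ when $V\subseteq U$.

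The construction then proceeds by recursion on $k$, enumerating pairs $(n,\text{stage})$, to build $S=\{s_k\}$ with $s_k$ in pieces not previously used. Density of $S$ in $U$ is needed, but I cannot enumerate the open sets because the weight may be uncountable. Hence the Fr\'echet property must supply countably many targets. For each $u_n$, fix a sequence $D_n\to u_n$ in $X\cap U$. Each $d\in D_n$ is itself a non-isolated point in $U$, and near $d$ one can find points of $X$ outside any finitely many closures $\overline{P_i}$. Their union is not a closed nowhere dense set, and the union of finitely many nowhere dense sets is nowhere dense.

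So the plan is a two-level recursion using the countability of $X$. Enumerate all pairs $(n,m)$ and choose $s_{n,m}\in X\cap U$ so that:
\begin{itemize}
\item its piece is new;
\item it lies in prescribed convergent sequences approximating $u_n$.
\end{itemize}
The approximation comes from a sequence $E_{n}\to u_n$ chosen by the Fr\'echet property inside $X\cap U\setminus\bigcup_{\text{used}}\overline{P}$, which still accumulates at $u_n$ whenever $u_n$ is outside these finitely many closed nowhere dense sets.

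The handling of the case where $u_n$ lies in those closures is where I expect the real difficulty. I would try replacing $u_n$ by a nearby point outside them, using that finitely many closed nowhere dense sets leave an open dense complement.
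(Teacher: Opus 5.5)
\textbf{There is a genuine gap: the sequences your diagonalization needs are never shown to exist.}

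Your final diagonalization is sound. Suppose that for each $u$ in a dense subset of $U$ you have an infinite $A_u\subseteq Y\cap U$ converging to $u$ that meets every piece of $\mathcal{P}$ in a finite set. A recursion over pairs, picking each time a point of the relevant $A_u$ from a piece not used before, gives a partial selector $S$ with $S\cap A_u$ infinite. Hence $u\in\overline{S}$ for all such $u$, and $U\subseteq\overline{S}$. But the existence of these sequences is the whole content of the proposition, and you do not establish it.

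Your case split is misdirected. In Case 1, any sequence from $Y$ converging to $u_n$ already works: infinitely many of its points in one piece $P$ would converge to $u_n$ and put $u_n\in\overline{P}$. The hard case is Case 2, which you leave open.

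Your proposed repair targets a non-issue. For any finite union $F$ of pieces, $(Y\cap U)\setminus\overline{F}$ is dense in $U$, so it accumulates at every point of $U$, including points of $\overline{F}$. The real obstruction is different. A sequence chosen at a finite stage avoids only finitely many pieces and may still be almost contained in a single later piece, while the recursion eventually uses infinitely many pieces. Moving $u_n$ to a point outside finitely many closures does not help, since that point may lie in the closures of infinitely many other pieces. Because Fr\'echet spaces need not be $\alpha_4$, the sequences $C_k$ cannot simply be diagonalized either.

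\textbf{The missing idea is a two-level argument that uses Hausdorffness.} Call $u\in U$ \emph{good} if some $A_u$ as above exists.
\begin{enumerate}
\item Given $v\in U$, take $C=\{c_i\mid i\in\omega\}\subseteq (Y\cap U)\setminus\{v\}$ converging to $v$, and suppose every $c_i$ is bad.
\item Recursively choose distinct pieces $P_i$ and infinite $C_i\subseteq P_i\setminus\{v\}$ converging to $c_i$. The Fr\'echet property gives a sequence converging to $c_i$ inside $(Y\cap U)\setminus(\{v,c_i\}\cup P_0\cup\dots\cup P_{i-1})$. This set accumulates at $c_i$ because $P_0\cup\dots\cup P_{i-1}$ is nowhere dense and $X$ has no isolated points. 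Badness of $c_i$ forces the sequence to meet some new piece in an infinite set.
\item Now $v\in\overline{\bigcup_i C_i}$, so the Fr\'echet property gives $E\subseteq\bigcup_i C_i$ converging to $v$.
\item If $v$ were bad, $E$ would meet some $P_i$ in an infinite set. Since $E\cap P_i=E\cap C_i$, that infinite set would converge both to $v$ and to $c_i\neq v$, which is impossible in a Hausdorff space.
\end{enumerate}
Hence every nonempty open $V\subseteq U$ contains a good point: take $v\in V$ and $C\subseteq V$, and either some $c_i$ is good or $v$ is. So the good points are dense in $U$, and running your diagonalization over the countably many good points finishes the proof.

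In fact more is true. If infinitely many $c_i$ are good, diagonalize their witnesses to get a partial selector accumulating at $v$, and apply the Fr\'echet property at $v$; this makes $v$ good. Combined with the argument above, every point of $U$ is good. So your original plan does work, but only once this argument is supplied.
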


\section{Preliminaries on Forcing}

We review some preliminaries on forcing that will be needed. We assume the
reader is already familiar with the method of forcing as presented in
\cite{oldKunen}. Let $\mathcal{F}$ be a filter on $\omega.$ The \emph{Laver
forcing of }$\mathcal{F}$ (denoted by $\mathbb{L(\mathcal{F})}$) consists of
all trees $p\subseteq\omega^{<\omega}$ that have a stem and if $t\in T$
extends the stem$,$ then \textsf{suc}$_{T}\left(  s\right)  \in\mathcal{F}$.
Given $p,q\in\mathbb{L(\mathcal{F})},$ denote $p\leq q$ if $p\subseteq q.$ It is easy to
see that $\mathbb{L(\mathcal{F})}$ is a \textsf{c.c.c. }partial order. If
$p\in\mathbb{L(\mathcal{F})}$ and $s\in p,$ define $p_{s}=\left\{  t\in p\mid t\subseteq
s\vee s\subseteq t\right\}  .$ It is clear that $p_{s}\in\mathbb{L}\left(
\mathcal{F}\right)  ,$ it extends $p$ and in case \textsf{st}$\left(
p\right)  \subseteq s,$ we have that \textsf{st}$\left(  p_{s}\right)  =s.$
The \emph{Laver generic real }will be denoted by $l_{gen}\in\omega^{\omega}$
and is the only element that is a branch of every tree in the generic filter.
To learn more about this forcings, see
\cite{MathiasPrikryandLaverPrikryTypeForcing}.

By $\mathbb{D}$ we denote \emph{Hechler forcing}, which consists of all pairs
$\left(  s,f\right)  $ where $s\in\omega^{<\omega}$ and $f\in\omega^{\omega}.$
Define $\left(  s,f\right)  \leq\left(  t,g\right)  $ if $t\subseteq s,$
$g\left(  n\right)  \leq f\left(  n\right)  $ for every $n\geq\left\vert
t\right\vert $ and $s\left(  i\right)  \geq g\left(  i\right)  $ for every
$i\in$ \textsf{dom}$\left(  s\right)  \setminus$ \textsf{dom}$\left(
t\right)  .$ Hechler forcing is the standard \textsf{c.c.c. }forcing for
adding a dominating real. To learn about Hechler forcing, see \cite{Barty},
\cite{HechlerForcing} or \cite{HechlerPalumbo}.

A family $\mathcal{W}\subseteq\left[  \omega\right]  ^{\omega}$ is $\omega
$\emph{-hitting }if for every $\left\{  X_{n}\mid n\in\omega\right\}
\subseteq\left[  \omega\right]  ^{\omega},$ there is $W\in\mathcal{W}$ that
intersects every $X_{n}.$ We say $\mathbb{P}$ \emph{preserves} $\omega
$\emph{-hitting families }if every $\omega$-hitting family remain $\omega
$-hitting after forcing with $\mathbb{P}.$ It is not hard to see that a
forcing preserving $\omega$-hitting families can not fill towers. A more
refined version of the previous notion is the following:

\begin{definition}
Let $\mathbb{P}$ be a partial order. We say that $\mathbb{P}$ \emph{strongly
preserves} $\omega$\emph{-hitting families }if for every $\mathbb{P}$-name
$\dot{B}$ for an infinite subset of $\omega,$ there is $\left\{  B_{n}\mid
n\in\omega\right\}  \subseteq\left[  \omega\right]  ^{\omega}$ such that for
every $X\in\left[  \omega\right]  ^{\omega},$ if $\left\vert X\cap
B_{n}\right\vert =\omega$ for every $n\in\omega,$ then $\mathbb{P}$ forces
that $X\cap\dot{B}$ is infinite.
\end{definition}

In \cite{CountableFrechetGroups}, the reader can find a characterization of
the filters whose Laver forcing strongly preserves $\omega$-hitting families.
Moreover, it is also proved that $\mathbb{L}\left(  \mathcal{F}\right)  $
preserves $\omega$-hitting families if and only if $\mathbb{L}\left(
\mathcal{F}\right)  $ strongly preserves $\omega$-hitting families. It is also
known that Hechler forcing strongly preserves $\omega$-hitting families. The
following iteration theorem can be found in \cite{CountableFrechetGroups}.

\begin{theorem}
[Brendle, H. \cite{CountableFrechetGroups}]The finite support iteration of
\textsf{c.c.c. }forcings that strongly preserve $\omega$-hitting families,
strongly preserves $\omega$-hitting families.
\end{theorem}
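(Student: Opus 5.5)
The plan is to prove, by induction on $\delta$, that for a finite support iteration $\langle\mathbb{P}_{\alpha},\dot{\mathbb{Q}}_{\alpha}\mid\alpha<\delta\rangle$ of \textsf{c.c.c.} forcings, each forced to strongly preserve $\omega$-hitting families, every $\mathbb{P}_{\alpha}$ with $\alpha\leq\delta$ strongly preserves $\omega$-hitting families. The case $\delta=0$ is trivial. I will split the inductive step into three cases: successor, limit of uncountable cofinality, and limit of countable cofinality.

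\emph{Successor step.} Here $\mathbb{P}_{\alpha+1}=\mathbb{P}_{\alpha}\ast\dot{\mathbb{Q}}_{\alpha}$. Let $\dot{B}$ be a $\mathbb{P}_{\alpha+1}$-name for an infinite subset of $\omega$.
\begin{enumerate}
\item In $V^{\mathbb{P}_{\alpha}}$, apply the hypothesis on $\dot{\mathbb{Q}}_{\alpha}$ to $\dot{B}$. This gives $\mathbb{P}_{\alpha}$-names $\{\dot{B}_{n}\mid n\in\omega\}$ for infinite sets with the following property: every $X$ that hits all $\dot{B}_{n}$ is forced by $\dot{\mathbb{Q}}_{\alpha}$ to hit $\dot{B}$.
\item For each $n$, apply the induction hypothesis for $\mathbb{P}_{\alpha}$ to $\dot{B}_{n}$. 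This gives ground model sets $\{B_{n,m}\mid m\in\omega\}$.
\item Take the countable family $\{B_{n,m}\mid n,m\in\omega\}$. If $X\in V$ hits every $B_{n,m}$, then $\mathbb{P}_{\alpha}$ forces that $X$ hits every $\dot{B}_{n}$. Since $X$ is a ground model set, step 1 applies to it in $V^{\mathbb{P}_{\alpha}}$, so $X\cap\dot{B}$ is forced to be infinite.
\end{enumerate}

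\emph{Limit of uncountable cofinality.} By the \textsf{c.c.c.}, a name for a subset of $\omega$ is determined by countably many countable antichains. With finite supports, these antichains all live in some $\mathbb{P}_{\alpha}$ with $\alpha<\delta$. So $\dot{B}$ is equivalent to a $\mathbb{P}_{\alpha}$-name. Apply the induction hypothesis to that name; since $\mathbb{P}_{\alpha}$ is a complete suborder of $\mathbb{P}_{\delta}$, the conclusion transfers to $\mathbb{P}_{\delta}$.

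\emph{Limit of countable cofinality.} I expect this case to be the main obstacle, because $\dot{B}$ need not be added at any initial stage. Fix an increasing sequence $\alpha_{k}$ cofinal in $\delta$ and let $G_{k}$ denote the $\mathbb{P}_{\alpha_{k}}$-generic filter.
\begin{enumerate}
\item For each $k$, let $\dot{D}_{k}$ be the $\mathbb{P}_{\alpha_{k}}$-name for the set of $n$ such that some $q\in\mathbb{P}_{\delta}$ with $q\upharpoonright\alpha_{k}\in G_{k}$ forces $n\in\dot{B}$. In other words, $\dot{D}_{k}$ is the set of $n$ that the quotient does not decide out of $\dot{B}$. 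Since $\dot{B}\subseteq\dot{D}_{k}$ in the final extension and $\dot{B}$ is infinite, $\dot{D}_{k}$ is forced to be infinite.
\item Apply the induction hypothesis for $\mathbb{P}_{\alpha_{k}}$ to $\dot{D}_{k}$. This gives countable families $\{B_{k,j}\mid j\in\omega\}$, and I take $\{B_{k,j}\mid k,j\in\omega\}$ as the witnessing family.
\item Suppose $X$ hits every $B_{k,j}$, but some $p\in\mathbb{P}_{\delta}$ forces $X\cap\dot{B}\subseteq m$. Because supports are finite and $\mathrm{cf}(\delta)=\omega$, there is $k$ with $p\in\mathbb{P}_{\alpha_{k}}$.
\item By the choice of the $B_{k,j}$, $p$ forces that $X\cap\dot{D}_{k}$ is infinite. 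Take a generic $G_{k}$ containing $p$, an $n\geq m$ in $X\cap D_{k}$, and a witness $q$ with $q\upharpoonright\alpha_{k}\in G_{k}$ and $q\Vdash n\in\dot{B}$.
\item Since $p$ has support below $\alpha_{k}$ and $p$ lies in $G_{k}$ together with $q\upharpoonright\alpha_{k}$, the conditions $p$ and $q$ are compatible. A common extension forces both $n\in\dot{B}$ and $X\cap\dot{B}\subseteq m$ with $n\geq m$, which is a contradiction.
\end{enumerate}

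The delicate points are checking that the $\dot{D}_{k}$ are forced infinite, and the compatibility of $p$ with $q$. The latter is exactly where finite supports are used.
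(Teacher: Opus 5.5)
The paper gives no proof of this theorem; it only cites Brendle and Hru\v{s}\'{a}k \cite{CountableFrechetGroups}. Your argument is correct and is the standard induction for finite support preservation results of this kind: two-step composition at successors, and at limits of uncountable cofinality, reflecting $\dot{B}$ to an initial stage using the \textsf{c.c.c.} At limits of countable cofinality you use the $\mathbb{P}_{\alpha_k}$-names $\dot{D}_k$ for the integers the quotient has not excluded from $\dot{B}$. The two points you flagged do hold: $\dot{D}_k$ is forced infinite because every $\mathbb{P}_{\alpha_k}$-generic extends to a $\mathbb{P}_\delta$-generic in which $B\subseteq D_k$, and $p$ is compatible with $q$ because $p$ has support inside $\alpha_k$.
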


Let $\mathbb{P}$ be a partial order. We say that $C\subseteq\mathbb{P}$ is
\emph{centered }if any finitely many elements of $C$ have a lower bound in
$\mathbb{P}.$ Recall that $\mathbb{P}$ is $\sigma$\emph{-centered} if it is
the union of countably many of its centered sets. On the other hand, we say
that $\mathbb{P}$ is $\sigma$\emph{-filtered }if it is the union of countably
many filters. Evidently, every $\sigma$-filtered forcing is $\sigma$-centered.
Juh\'{a}sz and Kunen proved that the converse is not true (see
\cite{OnCenteredPosets}). Nevertheless, we have the following:

\begin{proposition}
Let $\mathbb{P}$ be a partial order and $\mathbb{B}$ its Boolean completion.
The following are equivalent: \label{Prop Boolean compl}

\begin{enumerate}
\item $\mathbb{B}$ is $\sigma$-centered.

\item $\mathbb{B}$ is $\sigma$-filtered.

\item $\mathbb{P}$ is $\sigma$-centered.
\end{enumerate}
\end{proposition}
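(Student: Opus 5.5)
The proof runs around the cycle of implications (2)$\Rightarrow$(3)$\Rightarrow$(1)$\Rightarrow$(2). The steps (2)$\Rightarrow$(3) and (3)$\Rightarrow$(1) are routine. For (2)$\Rightarrow$(3), a filter is centered, and the restriction of a partition of $\mathbb{B}^{+}$ into countably many filters to $\mathbb{P}$ (viewed as a dense subset of $\mathbb{B}^{+}$) gives countably many sets each of which is centered in $\mathbb{B}$. Since any finitely many elements of $\mathbb{P}$ compatible in $\mathbb{B}$ have a common lower bound in $\mathbb{B}^{+}$, and below that a common lower bound in $\mathbb{P}$ by density, these pieces are centered in $\mathbb{P}$. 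For (3)$\Rightarrow$(1), write $\mathbb{P}=\bigcup_{n}C_{n}$ with each $C_{n}$ centered. Let $D_{n}=\{b\in\mathbb{B}^{+}\mid\exists p\in C_{n}\,(p\leq b)\}$. Each $D_{n}$ is centered: given $b_{1},\dots,b_{k}\in D_{n}$ witnessed by $p_{i}\leq b_{i}$, a common lower bound of the $p_{i}$ in $\mathbb{P}$ bounds the $b_{i}$. Every $b\in\mathbb{B}^{+}$ lies above some element of $\mathbb{P}$ by density, so the $D_{n}$ cover $\mathbb{B}^{+}$.

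The substantive step is (1)$\Rightarrow$(2), and the idea is to use completeness of $\mathbb{B}$ to replace each centered piece by the filter it generates. Let $\mathbb{B}^{+}=\bigcup_{n}C_{n}$ with each $C_{n}$ centered. For a fixed $n$, the family $C_{n}$ has the finite intersection property in the Boolean sense: every finite meet $c_{1}\wedge\dots\wedge c_{k}$ is nonzero, because by centeredness it lies above some nonzero lower bound. Hence the set $F_{n}=\{b\in\mathbb{B}^{+}\mid \exists c_{1},\dots,c_{k}\in C_{n}\,(c_{1}\wedge\dots\wedge c_{k}\leq b)\}$ is a filter on $\mathbb{B}^{+}$ containing $C_{n}$. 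Since the $C_{n}$ already cover $\mathbb{B}^{+}$, so do the $F_{n}$, and $\mathbb{B}$ is $\sigma$-filtered.

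The point I would flag as the only real obstacle is the difference between $\mathbb{B}$ and $\mathbb{P}$. In a general poset, finitely many elements of a centered set may have several incomparable lower bounds and no greatest one, so the upward closure of a centered set need not be a filter; this is the phenomenon behind the Juhász--Kunen example. In $\mathbb{B}$ the meet $c_{1}\wedge\dots\wedge c_{k}$ exists and is the greatest lower bound, and this is exactly what makes $F_{n}$ closed under finite meets. I would therefore check carefully that "centered" is read in $\mathbb{B}^{+}$ (lower bounds nonzero), so that the finite meets are nonzero and $F_{n}$ is a genuine filter. Beyond this, the remaining care is just the density bookkeeping between $\mathbb{P}$ and $\mathbb{B}^{+}$ in the first two implications.
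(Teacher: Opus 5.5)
The paper gives no proof of this proposition; it is quoted as a standard fact right after the Juh\'{a}sz--Kunen remark, so there is no argument of the authors to compare against. Your proof is correct and is the standard one. The only non-formal step is (1)$\Rightarrow$(2), and closing each centered piece $C_n$ under finite Boolean meets and then upward is exactly right. Note that this step uses only the existence of finite meets in $\mathbb{B}$, not completeness, so it actually shows that every $\sigma$-centered Boolean algebra is $\sigma$-filtered.

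There are two small corrections, both in (2)$\Rightarrow$(3).

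First, you speak of a \emph{partition} of $\mathbb{B}^{+}$ into filters. Filters in $\mathbb{B}^{+}$ always overlap, since each contains $1$, so you mean a countable cover.

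Second, viewing $\mathbb{P}$ as a subset of $\mathbb{B}^{+}$ presupposes that $\mathbb{P}$ is separative. In general the canonical map $e\colon\mathbb{P}\to\mathbb{B}^{+}$ need not reflect the order, so from $e(q)\le e(p_1)\wedge\dots\wedge e(p_k)$ you cannot conclude $q\le p_i$. The conclusion you want still holds. The inequality $e(q)\le e(p_i)$ means that every extension of $q$ is compatible with $p_i$. So choose $r_1\le q,p_1$, then $r_2\le r_1,p_2$, and so on; then $r_k\le p_1,\dots,p_k$ in $\mathbb{P}$.

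This second point is worth making explicit because the paper applies the proposition to the quotient $\mathbb{P}_{\beta}/G_{\alpha}$, which need not be separative. The direction used there, (3)$\Rightarrow$(2), goes through your (3)$\Rightarrow$(1) step. That step only needs $e$ to be order-preserving into $\mathbb{B}^{+}$ with dense image, so it is unaffected.
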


We will need the following well-known preservation result (see
\cite{Tallsigmacentered} and \cite{BorelandDualBorel}).

\begin{proposition}
Let $\gamma<\mathfrak{c}^{+}$ and $\langle\mathbb{P}_{\alpha},\mathbb{\dot{Q}%
}_{\alpha}\mid\alpha<\gamma\rangle$ be a finite support iteration of $\sigma
$-centered forcings. $\mathbb{P}_{\gamma}$ is $\sigma$-centered.
\label{Prop iteracion centered}
\end{proposition}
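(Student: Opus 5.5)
The plan is to use the standard bookkeeping argument for finite support iterations of $\sigma$-centered forcings, by induction on $\gamma<\mathfrak{c}^{+}$. Since $|\gamma|\leq\mathfrak{c}$, we may fix an injection $e:\gamma\longrightarrow2^{\omega}$ (or into $\mathbb{R}$), and we work with $\omega^{\omega}$-indexed or $2^{\omega}$-indexed colourings. By Proposition \ref{Prop Boolean compl} we may assume, replacing each $\dot{\mathbb{Q}}_{\alpha}$ by a name for its Boolean completion, that $\mathbb{P}_{\alpha}$ forces $\dot{\mathbb{Q}}_{\alpha}$ to be $\sigma$-filtered. Then we fix names $\dot{c}_{\alpha}:\dot{\mathbb{Q}}_{\alpha}\longrightarrow\omega$ such that each fibre $\dot{c}_{\alpha}^{-1}(n)$ is forced to be a filter. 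The filter property, rather than mere centeredness, is what makes the names behave well: any finitely many conditions with the same colour have their meet in the same fibre.

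First we pass to a dense set. Let $D\subseteq\mathbb{P}_{\gamma}$ be the set of conditions $p$ for which there are a finite set $F=\mathsf{dom}(p)$ and, for each $\alpha\in F$, an integer $n_{\alpha}$ such that $p\upharpoonright\alpha\Vdash\dot{c}_{\alpha}(p(\alpha))=n_{\alpha}$. By induction on the support, using finite support, $D$ is dense. Next we define the colouring. We fix a countable base $\{U_{k}\}$ for $2^{\omega}$. To $p\in D$ we assign the finite partial function $h_{p}$ which sends, for some choice of pairwise disjoint basic clopen sets $U_{k_{\alpha}}$ separating the points $e(\alpha)$ for $\alpha\in F$, each $k_{\alpha}$ to $n_{\alpha}$. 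There are only countably many such finite objects.

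Then we check that each colour class is centered. Take $p_{1},\dots,p_{m}\in D$ with the same $h$. If $\alpha$ lies in two supports, then $e(\alpha)$ is separated inside the same $U_{k}$ in both, so both conditions give $\alpha$ the same colour $n$. We build a lower bound $q$ by recursion on $\alpha\in\bigcup_{i}\mathsf{dom}(p_{i})$. Suppose $q\upharpoonright\alpha$ extends every $p_{i}\upharpoonright\alpha$. Then $q\upharpoonright\alpha$ forces that all the $p_{i}(\alpha)$ lie in the filter $\dot{c}_{\alpha}^{-1}(n)$. We let $q(\alpha)$ be a name for their meet, which lies in the same fibre. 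This keeps the induction going.

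The main subtlety is this coherence: disjoint separating neighbourhoods must ensure that a shared coordinate receives the same colour in all the conditions. Also, the lower bound must stay compatible with the colouring of later coordinates, so we need to know that the meet is again in the colour-$n$ fibre. This is exactly where $\sigma$-filtered, not just $\sigma$-centered, is used. For $\gamma$ of size at most $\mathfrak{c}$ no further induction is needed; the injection $e$ does all the work, and this is where the bound $\gamma<\mathfrak{c}^{+}$ enters.
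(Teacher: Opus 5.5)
The paper gives no proof of this proposition. It quotes it as a well-known fact from the cited references, and your argument is the standard one behind them: an injection $e:\gamma\to 2^{\omega}$ is used, as in the Hewitt--Marczewski--Pondiczery theorem, to make shared coordinates receive the same colour. Your proof is correct. The coherence step works because the indices in $\mathsf{dom}(h)$ name pairwise disjoint nonempty basic sets. So a coordinate $\alpha$ common to two conditions of colour $h$ must get the same index in both, and hence the same colour $h(k_\alpha)$.

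Your closing commentary, however, misidentifies where the work lies: $\sigma$-filteredness is not needed. Centeredness of a colour class only requires a common lower bound somewhere in $\mathbb{P}_\gamma$, not inside the class. In your recursion, all that is used at a later coordinate $\beta$ is $q\upharpoonright\beta\leq p_i\upharpoonright\beta$. That already forces each $p_i(\beta)$ to have colour $n_\beta$, whatever the colours of the earlier $q(\alpha)$ are. So it suffices that $\dot{c}_\alpha^{-1}(n)$ is forced to be centered, and $q(\alpha)$ can be any name for a common lower bound of the $p_i(\alpha)$.

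Dropping the passage to Boolean completions also removes a justification you skipped. You would need that the original iteration embeds densely into the iteration of the completions, and that $\sigma$-centeredness transfers back along such an embedding.

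A minor point: prove the density of $D$ by induction on $\max\mathsf{dom}(p)$, or on the length of the iteration, rather than on the support. Deciding $\dot{c}_\beta(p(\beta))$ may enlarge the support below $\beta$.
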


A remarkable theorem of Bell is the following:

\begin{theorem}
[Bell \cite{TeoremadeBell}]Let $\kappa$ be a cardinal. The following are
equivalent: \label{Teorema de Bell}

\begin{enumerate}
\item $\kappa<\mathfrak{p}.$

\item For every $\sigma$-centered forcing $\mathbb{P}$ and $\left\{
D_{\alpha}\mid\alpha<\kappa\right\}  $ a collection of dense subsets of
$\mathbb{P},$ there is a filter $G\subseteq\mathbb{P}$ such that $G\cap
D_{\alpha}\neq\emptyset$ for every $\alpha<\kappa.$
\end{enumerate}
\end{theorem}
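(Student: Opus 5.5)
The theorem has two directions, and I would prove them separately.

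\textbf{From (2) to (1).} This direction is a direct application of Mathias forcing relativized to a centered family. Let $\mathcal{H}=\{H_\alpha\mid\alpha<\kappa\}\subseteq[\omega]^\omega$ be centered; we want an infinite pseudo-intersection. We may assume $\kappa$ is infinite, since a finite centered family trivially has one.

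Let $\mathbb{P}$ consist of pairs $(s,F)$ with $s\in[\omega]^{<\omega}$ and $F\in[\mathcal{H}]^{<\omega}$. Order them by $(t,G)\le(s,F)$ iff $s\subseteq t$, $F\subseteq G$ and $t\setminus s\subseteq\bigcap F$.

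\begin{enumerate}
\item Two conditions with the same first coordinate have the common extension $(s,F\cup G)$. So $\mathbb{P}$ is the union of the countably many centered sets $\{(s,F)\mid F\in[\mathcal{H}]^{<\omega}\}$, one for each $s$, and is therefore $\sigma$-centered.
\item The sets $D_\alpha=\{(s,F)\mid H_\alpha\in F\}$ are dense.
\item The sets $E_n=\{(s,F)\mid |s|>n\}$ are dense, because $\mathcal{H}$ is centered and so every $\bigcap F$ is infinite.
\end{enumerate}

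This gives $\kappa+\omega=\kappa$ dense sets. A filter $G$ meeting all of them yields $X=\bigcup\{s\mid(s,F)\in G\}$. The set $X$ is infinite because $G$ meets every $E_n$. If $(s,F)\in G$ with $H_\alpha\in F$, then compatibility within $G$ gives $X\setminus s\subseteq H_\alpha$. Hence $X$ is an infinite pseudo-intersection of $\mathcal{H}$, and so $\kappa<\mathfrak{p}$.

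\textbf{From (1) to (2).} This is the substantial direction. First I would reduce to a manageable poset. By Proposition \ref{Prop Boolean compl} we may pass to the Boolean completion and write it as $\bigcup_n\mathcal{F}_n$ with each $\mathcal{F}_n$ a filter. Dense sets of $\mathbb{P}$ give dense sets there, and a filter there meeting them traces back to a filter of $\mathbb{P}$ meeting the original sets. Using a Löwenheim--Skolem closure argument, I would then shrink to a subposet $Q$ of size $\kappa$ that contains enough of each $D_\alpha$ and is closed under meets of the relevant elements.

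For $q\in Q$ put $A_q=\{n\mid q\in\mathcal{F}_n\}$. The key observation is the following. If $X\subseteq\omega$ is infinite, then
\[
G_X=\{q\mid X\subseteq^{*}A_q\}
\]
is a filter, because $q,r\in\mathcal{F}_n$ implies $q\wedge r\in\mathcal{F}_n$. So it suffices to find an infinite $X$ and, for each $\alpha$, some $d_\alpha\in D_\alpha$ with $X\subseteq^{*}A_{d_\alpha}$.

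I would build a $\subseteq^{*}$-decreasing sequence $\langle X_\alpha\mid\alpha<\kappa\rangle$ by recursion, together with conditions $d_\alpha$ satisfying $X_{\alpha+1}\subseteq^{*}A_{d_\alpha}$. At limit stages I would take a pseudo-intersection, which exists because $\kappa<\mathfrak{p}=\mathfrak{t}$. The final $X$ is a pseudo-intersection of the whole sequence, obtained again from $\kappa<\mathfrak{p}$.

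\textbf{Main obstacle.} The difficulty is the successor step: given $X_\alpha$, we must find $d\in D_\alpha$ with $A_d\cap X_\alpha$ infinite. Picking $n\in X_\alpha$ and extending an element of $\mathcal{F}_n$ into $D_\alpha$ may land the extension only in filters $\mathcal{F}_m$ with $m\notin X_\alpha$.

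My first attempt would change the index set. Instead of $\omega$, I would use the countable set of finite sequences of indices, so that a node records which filters the conditions chosen along it belong to. I would then run the recursion with centered families of subsets of this countable set rather than a single chain, in effect applying $\kappa<\mathfrak{p}$ to a centered family of size $\kappa$ coding all the requirements. The point of this coding is that whenever $q$ is extended to $d\in D_\alpha$, the new index of $d$ sits above the node recording $q$. A pseudo-intersection of the coding family then yields an infinite branch-like set along which the required memberships hold, and this replaces the naive successor step.
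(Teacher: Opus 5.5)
The paper quotes this theorem from Bell and gives no proof of it, so your argument has to be judged on its own. Your proof of (2)$\Rightarrow$(1) is correct: it is the standard Mathias-type poset relative to $\mathcal{H}$, and every step checks. (A side remark for the other direction: at limit stages you do not need $\mathfrak{p}=\mathfrak{t}$, since a $\subseteq^{*}$-decreasing sequence is already a centered family.)

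\textbf{The gap: (1)$\Rightarrow$(2) is not proved.} The reduction to a $\sigma$-filtered poset of size $\kappa$ is fine modulo routine bookkeeping, and so is the observation that $G_X$ is a filter. But the step you call the main obstacle is the whole content of Bell's theorem, and it is not merely a local problem at successor stages: legitimate earlier choices can make the recursion die at a limit, no matter what is done afterwards. A Cohen-forcing example shows this.
\begin{itemize}
\item Let $\{s_n\}$ enumerate $2^{<\omega}$ and let $N_s$ be the basic clopen set of $s$. In the Cohen algebra take $\mathcal{F}_n=\{b\mid N_{s_n}\subseteq b\}$.
\item Let $D_k=\{b\mid b\le N_{0^{k}}\text{ or } b\perp N_{0^{k}}\}$, where $0^{k}$ is the sequence of $k$ zeros. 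Suppose the recursion picks $d_k=N_{0^{k}}$ and $X_{k+1}=A_{d_k}$, which is a legitimate run.
\item For every $X\subseteq^{*}X_\omega$, the filter $G_X$ contains every $N_{0^{k}}$.
\item Every element of the dense set $\{b\mid \exists i\,(b\le N_{0^{i}1})\}$ is incompatible with some $N_{0^{k}}$, so $G_X$ misses it. If this set is a later $D_\beta$, the construction is dead.
\end{itemize}
What is missing is therefore a global device that makes all $\kappa$ choices coherent, not a better successor step.

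Your re-indexing sketch does not supply such a device, for three reasons.
\begin{itemize}
\item \emph{The filter property is lost.} Suppose a node records filters $\mathcal{F}_{n_0},\dots,\mathcal{F}_{n_{k-1}}$ and $A_q$ is the set of nodes at which $q$ lies in one of them. Then $A_q$ counts every member of those filters, not just the conditions you actually chose: a node carries finite information, while the choices range over $\kappa$ conditions. Different recorded filters can contain incompatible conditions, so $\{q\mid X\subseteq^{*}A_q\}$ need not be a filter.
\item \emph{The obvious repairs fail.} If you look only at the last recorded filter, you are back to the $\omega$-indexing and its obstacle. If you restrict to nodes whose recorded filters are jointly consistent, the extension step breaks: the $\mathcal{F}_m$ containing an extension $d\le q$ in $D_\alpha$ need not be consistent with the filters already recorded. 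For instance, if the $\mathcal{F}_n$ are ultrafilters, a consistent node records a single ultrafilter and nothing is gained.
\item \emph{Branches and centeredness are unjustified.} A pseudo-intersection of a family fixed in advance is just an infinite set of nodes, and nothing makes it branch-like. If it is spread over different branches, you again need one $d\in D_\alpha$ for almost all nodes. The centeredness of the ``coding family'' is asserted, not proved, and proving it amounts to choosing the $d_\alpha$ coherently, which is the statement itself.
\end{itemize}
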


\section{Preliminaries on Forcing and topology \label{hitting and sealing}}

In this section we review the method for destroying the Fr\'{e}chet property
at a point, as developed in \cite{MalykhinProblem} (which was based on the
techniques from \cite{CountableFrechetGroups}). Let $X$ be a topological space
and $a\in X.$ According to Table 1, to destroy the Fr\'{e}chet property at $a$
by a forcing $\mathbb{P},$ we must add a set $\dot{A}$ such that
$\mathcal{I}\left(  a\right)  \upharpoonright\dot{A}$ is a tall ideal. But
this is not enough, since the tallness of an ideal may not be preserved under
forcing iterations. The solution is to ensure that $\mathcal{I}\left(
a\right)  \upharpoonright\dot{A}$ is not only a tall ideal but an $\omega
$-hitting one, for which we can prove preservation theorems under forcing.

\begin{definition}
Let $\mathcal{I}$ be an ideal on a countable set, $\mathbb{P}$ a forcing
notion, and $\dot{A}$ a $\mathbb{P}$-name. We say that $\mathbb{P}$
\emph{seals} $\mathcal{I}$ \emph{with }$\dot{A}$ if $\mathbb{P}$ forces that
$\dot{A}\in\mathcal{I}^{+}$ and $\mathcal{I}\upharpoonright\dot{A}$ is
$\omega$-hitting.
\end{definition}

Let $X$ be a countable space. It is easy to see that $\mathbb{L}%
($\textsf{nwd(}$X$)$^{\ast})$ forces $\dot{A}_{\text{\textsf{gen}}}$ to be a
dense subset of $X.$ The following is Proposition 5.2 of
\cite{MalykhinProblem}:

\begin{proposition}
[H., Ramos \cite{MalykhinProblem}]Let $X$ be a countable space with no
isolated points and $a\in X:$ \label{Prop Michael Ariet}

\begin{enumerate}
\item If $a$ has uncountable $\pi$-weight, then $\mathbb{L}($\textsf{nwd(}%
$X$)$^{\ast})$ seals $\mathcal{I}_{X}\left(  a\right)  $ via $\dot
{A}_{\text{\textsf{gen}}}.$

\item If $X$ is Fr\'{e}chet, then $\mathbb{L}($\textsf{nwd(}$X$)$^{\ast})$
strongly preserves $\omega$-hitting families.
\end{enumerate}
\end{proposition}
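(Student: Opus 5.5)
The plan is to work in $\mathbb{L}(\textsf{nwd}(X)^{\ast})$ with $X$ identified with $\omega$, and to take $\dot{A}_{\textsf{gen}}=\textsf{im}(l_{gen})$. Both parts rest on the usual Laver-type rank analysis. Given a name $\dot{Z}$ for an infinite subset of $\omega$ and a node $t$ above the stem, we say $t$ is \emph{good} for $m$ if there is a tree $q$ with stem $t$ such that $q\Vdash$ ``$\dot{Z}$ has an element above $m$ whose value is decided along $q$''. Because the successor sets lie in a filter, the standard rank argument applies. It shows that any $p$ can be pruned to a $q\le p$ with the same stem such that, for every $t\in q$ above the stem, one of two things holds. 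Either $t$ is good, or the set of successors of $t$ leading to good nodes is still in $\textsf{nwd}(X)^{\ast}$. The price is that we sometimes pass from a co-nowhere dense set to a set that is only $\textsf{nwd}(X)^{\ast}$-positive. Positive here means that the set is somewhere dense, so it is dense in some nonempty open set $V$.

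For (1), I first check that $\dot{A}_{\textsf{gen}}$ is forced to be dense. Let $U$ be a nonempty open set and $p$ a condition. Since $X$ has no isolated points, $U$ is not nowhere dense, so it meets the co-nowhere dense set $\textsf{suc}_{p}(\textsf{st}(p))$. Extending the stem into $U$ shows that $A_{\textsf{gen}}\cap U\neq\emptyset$. Hence $a\in\overline{A_{\textsf{gen}}}$, and $\dot{A}_{\textsf{gen}}\in\mathcal{I}(a)^{+}$.

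For $\omega$-hitting, fix names $\{\dot{X}_{n}\mid n\in\omega\}$ for infinite subsets of $\dot{A}_{\textsf{gen}}$ and a condition $p$. The rank analysis above, run for each $\dot{X}_{n}$, produces countably many nonempty open sets $V_{k}$. These have the property that any way of steering new elements of some $\dot{X}_{n}$ into $V_{k}$ is still available, in the sense that the steerable values are dense in $V_{k}$. Since $a$ has uncountable $\pi$-character, $\{V_{k}\}$ is not a local $\pi$-base at $a$. So there is an open neighborhood $W\ni a$ with $V_{k}\not\subseteq\overline{W}$ for all $k$, after shrinking $W$ using regularity. The plan is then to set $Y=\dot{A}_{\textsf{gen}}\setminus\overline{W}$, which is forced to be in $\mathcal{I}(a)$. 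A density argument inside the tree then shows that each $\dot{X}_{n}\cap Y$ is forced infinite: each $V_{k}\setminus\overline{W}$ is a nonempty open set in which the steerable values are dense.

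For (2), given a name $\dot{B}$, I would use the same rank analysis to attach to each node $t$ (of the countably many) the set $B_{t}$ of values $m$ that can be forced into $\dot{B}$ from conditions with stem $t$ through positive-many successors. I then show that if $X'$ meets every $B_{t}$ in an infinite set, then no condition forces $X'\cap\dot{B}$ finite. The Fréchet property of $X$ enters through Proposition \ref{nwd no arriba ED}, namely that $\textsf{nwd}(X)$ is weakly selective. This lets me replace a positive set of successors, each contributing a finite piece of $B_{t}$, by a selector or a convergent sequence. The countable family $\{B_{t}\}$ can then be made to witness all possible continuations, i.e.\ one $B_{t}$ per relevant fusion pattern. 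I expect this step to be the main obstacle: the successor sets of the filter must be diagonalized countably often without leaving $\textsf{nwd}(X)^{\ast}$, and weak selectivity is exactly what is needed to collapse the finite-set branching to single points.
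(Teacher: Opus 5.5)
The paper does not prove this proposition; it imports it from Hru\v{s}\'{a}k--Ramos. So I can only compare your sketch with what a complete argument requires.

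\textbf{Part (1).} Your plan is sound. Suppose $S_k$ is dense in the open set $V_k$ for each $k\in\omega$, and $W\ni a$ is open with $V_k\not\subseteq\overline{W}$ for all $k$. Then $X\setminus\overline{W}\in\mathcal{I}(a)$, and each $S_k\setminus\overline{W}$ is still somewhere dense. The rank analysis transfers this to $\dot{A}_{\mathsf{gen}}$. (You use regularity to get $\overline{W}$; it is not in the statement, but it holds for Dow spaces.)

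For the transfer to work, three details matter:
\begin{enumerate}
\item Compute the ranks relative to $p$, for each $n$ and each possible current stem. This is legitimate because two conditions with the same stem are compatible.
\item Choose every successor along the descending path outside $\overline{W}$.
\item Let rank $0$ ask for a value outside the range of the current stem. The value forced into $\dot{X}_n$ at a rank-$0$ node is then one of the chosen successors, since a condition can force into $\dot{A}_{\mathsf{gen}}$ only entries of its stem.
\end{enumerate}
The ``good, or good successors form a set in $\mathsf{nwd}(X)^{\ast}$'' dichotomy you state first is the ultrafilter version and is not available here. Only ranks with positive successor sets are available, and that is what you actually use.

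\textbf{Part (2) has a genuine gap.} You never build the countable family, and the one you describe fails. Read literally, $B_t$ consists of values forced into $\dot{B}$ through a positive set of successors of $t$. Take $\dot{B}=c[\dot{A}_{\mathsf{gen}}]$ for a bijection $c:X\rightarrow\omega$.
\begin{itemize}
\item A condition with stem $t^{\frown}x$ forces exactly $c[\mathsf{im}(t^{\frown}x)]$ into $\dot{B}$, since any other point can be pruned (singletons are nowhere dense).
\item Hence every $B_t=c[\mathsf{im}(t)]$ is finite and useless as a witness.
\item Yet $Y\cap\dot{B}$ is forced infinite if and only if $c^{-1}[Y]$ is somewhere dense.
\end{itemize}
A positive selector does not repair this, because a nowhere dense set (e.g.\ a convergent sequence) can meet a somewhere dense set infinitely.

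What works, and where Fr\'{e}chetness is really used, is this. If $Z$ is dense in a nonempty open $V$, choose for \emph{every} $z\in V$ a sequence $C_z\subseteq Z$ converging to $z$. Every co-nowhere-dense successor set contains a dense open set, so it almost contains some $C_z$. This is what makes the family independent of the condition.

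In general you need more than that, and this is the part you left open.
\begin{enumerate}
\item For each $m$, rank the nodes by whether a condition with that stem decides $\min(\dot{B}\setminus m)$. The decided value $d_m(v)$ is unique, again by compatibility.
\item For a condition $r$, let $D_m(r)$ be the set of $d_m(v)$ over rank-$0$ nodes $v\in r$. Any $Y$ with $r\Vdash Y\cap\dot{B}\subseteq m$ misses $D_m(r)$.
\item Show by induction on rank that every node $u$ has one of two alternatives. Either some value is \emph{robust} at $u$, meaning it lies in $D_m(r)$ for all $r$ with stem $u$. Or there is a countable family of infinite sets, one of which is almost contained in $D_m(r)$ for each such $r$.
\item Inductive step. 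If the lower-rank successors with the second alternative form a positive set, $u$ inherits the union of their families. Otherwise the successors $x$ with robust values $k_x$ form a positive set. If some fiber of $x\mapsto k_x$ is positive, that value is robust at $u$. If not, weak selectivity gives a positive selector $Z$ on which $x\mapsto k_x$ is injective, and the sets $\{k_x\mid x\in C_z\}$ are the witnesses.
\item At the root you must pass between levels $m$. Either the second alternative holds for unboundedly many $m$, or the robust values $k_m\geq m$ for all large $m$ form one infinite set, which goes into the family.
\end{enumerate}
This dichotomy, together with the one-sequence-per-point choice, is exactly the ``main obstacle'' your proposal flags but does not resolve.
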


In this way, if $X$ and $a$ are as in the above proposition, then
$\mathbb{L}($\textsf{nwd(}$X$)$^{\ast})$ forces $\dot{A}_{\text{\textsf{gen}}%
}$ to be a dense subset of $X,$ yet it does not contain sequences converging
to $a.$ This property will be preserved under any further forcing extension
that preserves $\omega$-hitting families.

We would like to point out that the method of \cite{MalykhinProblem} has been
greatly refined and expanded by Shibakov and the third author (see \cite{IIA},
\cite{IIABeyond} and \cite{ConvergentSequencesinGroups}).

\section{The Dow space of a $\mathfrak{b}$-scale \label{DowSpace}}

We now review the construction of the Dow space from
\cite{PiWeightandFrechetProperty}. We expect that the study of the topological
properties of Dow spaces will be useful for investigating the combinatorial
properties of $\mathfrak{b}$-scales, much as the study of the
Mr\'{o}wka-Isbell spaces resides in understanding the combinatorics of almost
disjoint families (see \cite{TheIntegersandTopology},
\cite{AlmostDisjointFamiliesandTopology} and
\cite{TopologyofMrowkaIsbellSpaces}). Instead of working with $\mathfrak{b}%
$-scales on $\omega$, we find it more convenient to work on $\mathfrak{b}%
$-scales consisting of functions from $\omega^{<\omega}$ to $\omega.$ To this
end, we first adapt the relevant definitions to our setting.

For convenience, given $m\in\omega,$ we will denote $\triangle_{m}=m^{\leq
m}.$ Let $f,g:\omega^{<\omega}\longrightarrow\omega.$ We say that $f$ \emph{is
increasing }if for every $s,t\in\omega^{<\omega},$ if $s$ is a proper initial
segment of $t,$ then $f\left(  s\right)  <f\left(  t\right)  $ and for every
$n,m\in\omega,$ if $n<m,$ then $f(s^{\frown}n)<f(s^{\frown}m).$ As expected,
define $f<^{\ast}g$ if $f\left(  s\right)  <g\left(  s\right)  $ holds for
almost all $s\in\omega^{<\omega}.$ Moreover, define $f<_{m}g$ if $f\left(
s\right)  <g\left(  s\right)  $ holds for all $s\notin\triangle_{m}.$ It
follows that $f<^{\ast}g$ if and only if there is $m\in\omega$ such that
$f<_{m}g.$

\begin{definition}
Let $\mathcal{B}$ be a family of functions from $\omega^{<\omega}$ to
$\omega.$

\begin{enumerate}
\item $\mathcal{B}$ is a \emph{weak} $\mathfrak{b}$\emph{-scale }if the
following conditions hold:

\begin{enumerate}
\item $\mathcal{B}$ consists of increasing functions.

\item $\mathcal{B}$ is unbounded (there is no function $g$ such that
$f\leq^{\ast}g$ for every $f\in\mathcal{B}$).

\item $\mathcal{B}$ is well-ordered by $\leq^{\ast}.$

\item For every $n\in\omega,$ the set $\{f\in\mathcal{B}\mid n<f\left(
\emptyset\right)  \}$ is cofinal in $\mathcal{B}.$
\end{enumerate}

\item $\mathcal{B}$ is a $\mathfrak{b}$\emph{-scale }if its order type (with
respect to $\leq^{\ast}$) is $\mathfrak{b}.$
\end{enumerate}
\end{definition}

Although our main interest is $\mathfrak{b}$-scales, it is convenient to also
consider weak $\mathfrak{b}$-scales. We will now define the sets that will be
part of a subbase in a Dow space.

\begin{definition}
Let $f:\omega^{<\omega}\longrightarrow\omega.$ Define the tree $U\left(
f\right)  \subseteq\omega^{<\omega}$ recursively as follows:

\begin{enumerate}
\item $\emptyset\in U\left(  f\right)  .$

\item If $s\in U\left(  f\right)  ,$ then \textsf{suc}$_{U\left(  f\right)
}\left(  s\right)  =\omega\setminus\left\{  f\left(  s\right)  \right\}  .$
\end{enumerate}
\end{definition}

In this way, $U\left(  f\right)  $ is a very wide tree, since every node
branches into all elements of $\omega$ except one. The following is
 easy, but  worth pointing out.

\begin{lemma}
Let $f:\omega^{<\omega}\longrightarrow\omega$ and $s\in\omega^{<\omega}.$ If
$s\notin U\left(  f\right)  ,$ then there is $i<\left\vert s\right\vert $ such
that $f\left(  s\upharpoonright i\right)  =s\left(  i\right)  .$
\label{Lema hoyito}
\end{lemma}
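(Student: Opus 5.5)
We argue by looking at the shortest initial segment of $s$ that falls outside $U(f)$. Since $U(f)$ is a tree containing $\emptyset$, and $s\notin U(f)$, the set $\{j\leq|s| \mid s\upharpoonright j\notin U(f)\}$ is nonempty: it contains $|s|$. It does not contain $0$, because $s\upharpoonright 0=\emptyset\in U(f)$. Let $j$ be its least element. Then $j\geq1$, so we may write $j=i+1$ with $i<|s|$.

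By minimality of $j$, we have $s\upharpoonright i\in U(f)$, while $s\upharpoonright(i+1)=(s\upharpoonright i)^{\frown}s(i)\notin U(f)$. By the recursive definition of $U(f)$, the successors of the node $s\upharpoonright i$ in $U(f)$ are exactly $\omega\setminus\{f(s\upharpoonright i)\}$. The fact that $(s\upharpoonright i)^{\frown}s(i)\notin U(f)$ therefore means $s(i)\notin\textsf{suc}_{U(f)}(s\upharpoonright i)$, that is, $s(i)=f(s\upharpoonright i)$.

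There is no genuine obstacle in this argument. The only care needed is to read the recursive definition correctly: a sequence belongs to $U(f)$ precisely when each of its one-step extensions avoids the value of $f$ at the preceding node, i.e.\ $s\in U(f)$ if and only if $s(k)\neq f(s\upharpoonright k)$ for all $k<|s|$. With this reading, the lemma is simply the contrapositive of one direction of that equivalence.
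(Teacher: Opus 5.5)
Your proof is correct. Taking the least $j\le|s|$ with $s\upharpoonright j\notin U(f)$ and reading off the successor set of $s\upharpoonright(j-1)$ is exactly the direct unwinding of the recursive definition that the paper has in mind when it states this lemma without proof as ``easy''.
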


We now have the following easy lemma:

\begin{lemma}
Let $\mathcal{B}$ be a weak $\mathfrak{b}$-scale, $s\in\omega^{<\omega}$ and
$f\in\mathcal{B}.$
\label{Lema prop basicas}

\begin{enumerate}
\item If $s\in U\left(  f\right)  ,$ then $s^{\frown}\triangle_{f\left(
s\right)  }\subseteq U\left(  f\right)  .$

\item The set $\{g\in\mathcal{B}\mid s\in U\left(  g\right)  \}$ is cofinal in
$\mathcal{B}.$
\end{enumerate}
\end{lemma}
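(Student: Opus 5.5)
Both parts follow directly from the definitions of $U(f)$, of increasing functions, and of weak $\mathfrak{b}$-scales; the only care needed is in indexing.

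For part (1), suppose $s\in U(f)$ and take $t\in\triangle_{f(s)}=f(s)^{\leq f(s)}$, so $|t|\leq f(s)$ and every value $t(j)<f(s)$. By induction on $j\leq|t|$ we show that $s^{\frown}(t\upharpoonright j)\in U(f)$. The case $j=0$ is the hypothesis. For the inductive step, assume $u=s^{\frown}(t\upharpoonright j)\in U(f)$ with $j<|t|$. Since $u$ extends $s$ (possibly $u=s$), and $f$ is increasing along initial segments, we have $f(u)\geq f(s)$. Hence $t(j)<f(s)\leq f(u)$, so $t(j)\neq f(u)$. Therefore $t(j)\in\textsf{suc}_{U(f)}(u)=\omega\setminus\{f(u)\}$, which gives $u^{\frown}t(j)\in U(f)$. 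At $j=|t|$ this yields $s^{\frown}t\in U(f)$. The only point to check is that monotonicity is used with the non-strict inequality $f(u)\geq f(s)$, which covers the case $u=s$.

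For part (2), fix $s$ and $h\in\mathcal{B}$. We must find $g\in\mathcal{B}$ with $h\leq^{*}g$ and $s\in U(g)$. By Lemma \ref{Lema hoyito}, $s\in U(g)$ holds whenever $g(s\upharpoonright i)\neq s(i)$ for every $i<|s|$. Let $n=\max\{s(i)\mid i<|s|\}$, or $n=0$ if $s=\emptyset$. By clause (d) in the definition of a weak $\mathfrak{b}$-scale, there is $g\in\mathcal{B}$ with $h\leq^{*}g$ and $g(\emptyset)>n$. Since $g$ is increasing, for each $i<|s|$ we get $g(s\upharpoonright i)\geq g(\emptyset)>n\geq s(i)$. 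So no hole of $g$ lies on $s$, and $s\in U(g)$.

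Cofinality of such $g$ is exactly what we need. If one wants every $\mathcal{B}$-element above some point to work, it also follows: clause (d) together with well-ordering means that once $g(\emptyset)>n$ for a cofinal set of $g$, the set $\{g\mid s\in U(g)\}$ contains that cofinal set. No real obstacle is expected; the one subtlety is ensuring that increasingness yields $g(s\upharpoonright i)\geq g(\emptyset)$, including the case $i=0$.
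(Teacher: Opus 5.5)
Your proof is correct and follows the paper's route. Part (1) is the increasingness of $f$ along extensions of $s$. Part (2) uses clause (d) to get cofinally many $g$ with $g(\emptyset)$ above every entry of $s$. The paper phrases this as $s\in\triangle_n\subseteq\triangle_{g(\emptyset)}\subseteq U(g)$, which is part (1) applied at $\emptyset$; you inline that step.

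Drop your closing aside, though. Clause (d) gives only a cofinal set, not a final segment. Nothing in the definition prevents a weak $\mathfrak{b}$-scale from also having cofinally many members with $g(\emptyset)=0$, since lowering $g(\emptyset)$ keeps $g$ increasing and does not change its $\le^{*}$-position. For those members $(0)\notin U(g)$, so ``every element above some point works'' does not follow.
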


\begin{proof}
The first point follows since $f$ is an increasing function. For the second
point, choose $n\in\omega$ such that $s\in\triangle_{n}.$ Since $\{f\in
\mathcal{B}\mid n<f\left(  \emptyset\right)  \}$ is cofinal in $\mathcal{B},$
we get the desired conclusion.
\end{proof}

Let $s\in\omega^{<\omega}.$ In this paper, we denote $\left\langle
s\right\rangle =\left\{  t\in\omega^{<\omega}\mid s\subseteq t\right\}  .$
This set should not be confused with $\left\{  f\in\omega^{\omega}\mid
s\subseteq f\right\}  ,$ which is also denoted by $\left\langle s\right\rangle
$ in the literature. Sets of the form $\left\langle s\right\rangle $ will be
often referred as \emph{cones}, while a \emph{cocone }is a set of the form
$\left\langle s\right\rangle ^{\text{\textsf{c}}}=\omega^{<\omega}%
\setminus\left\langle s\right\rangle .$ A \emph{non-trivial cocone} is simply
a non-empty cocone. Define\footnote{In \cite{PiWeightandFrechetProperty} our
$s^{\uparrow}$ is denoted by $s^{\downarrow}$. We reverse this notation, as we
picture our trees as growing downward.} $s^{\uparrow}=\left\{  t\in
\omega^{<\omega}\mid t\subseteq s\right\}  $ and for a set $A\subseteq
\omega^{<\omega},$ denote $A^{\uparrow}=\bigcup\left\{  t^{\uparrow}\mid t\in
A\right\}  .$ We can now define the Dow space of a weak $\mathfrak{b}$-scale.

\begin{definition}
Let $\mathcal{B}$ be a weak $\mathfrak{b}$-scale. The \emph{Dow space of
}$\mathcal{B}$ is the space $\mathbb{D}(\mathcal{B})=(\omega^{<\omega}%
,\tau_{\mathcal{B}})$ where $\mathcal{\tau}_{\mathcal{B}}$ is the topology
generated by the subbase consisting of the following sets:

\begin{enumerate}
\item $\left\langle s\right\rangle ,$ $\left\langle s\right\rangle
^{\text{\textsf{c}}}$ for $s\in\omega^{<\omega}$.

\item $U\left(  f\right)  $ for $f\in\mathcal{B}.$
\end{enumerate}
\end{definition}

In \cite{PiWeightandFrechetProperty} the topology is not explicitly defined
from the $\mathfrak{b}$-scale (as in our presentation), but it is instead
constructed recursively. Nevertheless, our presentation falls under the scope
of \cite{PiWeightandFrechetProperty} because the family $\{U\left(  f\right)
\mid f\in\mathcal{B}\}$ satisfies Lemma 3.1 of that paper. The following
notion was introduced in \cite{PiWeightandFrechetProperty}:

\begin{definition}
We say a topological space $(\omega^{<\omega},\tau)$ is $\uparrow
$\emph{-sequential }if for every $s\in\omega^{<\omega},$ the following
conditions hold:

\begin{enumerate}
\item $\left\langle s\right\rangle $ is an open set$.$

\item $\left\langle s^{\frown}n\right\rangle _{n\in\omega}$ converges to $s.$

\item If $A\subseteq\omega^{<\omega}$ converges to $s,$ then $A^{\uparrow}$
also converges to $s.$
\end{enumerate}
\end{definition}

Denote by $A^{\left(  1\right)  }$ the set of all convergence points of
sequences contained in $A.$ In \cite{PiWeightandFrechetProperty} Dow proved
the following:

\begin{theorem}
[Dow]Let $\mathcal{B}$ be a $\mathfrak{b}$-scale. \ \label{Teorema prop Dow}

\begin{enumerate}
\item $\mathbb{D}\left(  \mathcal{B}\right)  $ is zero dimensional.

\item $\mathbb{D}\left(  \mathcal{B}\right)  $ is $\uparrow$-sequential.

\item The $\pi$-character of every point in$\ \mathbb{D}\left(  \mathcal{B}%
\right)  $ is $\mathfrak{b}.$

\item If $A\subseteq\omega^{<\omega},$ then $\left(  A^{\left(  1\right)
}\right)  ^{\left(  1\right)  }=A^{\left(  1\right)  }.$

\item $\mathbb{D}\left(  \mathcal{B}\right)  $ has no isolated points.

\item Every $\uparrow$-sequential topology extending $\tau_{\mathcal{B}}$ has
$\pi$-character at least $\mathfrak{b}.$
\end{enumerate}
\end{theorem}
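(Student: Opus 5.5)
The plan is to work with the combinatorial description of basic open sets and prove the six items in the order (1), (2), (5), (3), (6), (4). A basic open set is a finite intersection of cones $\left\langle u\right\rangle$, cocones $\left\langle u\right\rangle^{\textsf{c}}$ and sets $U(f)$ with $f\in\mathcal{B}$.

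For (1), cones and cocones are complementary, so both are clopen. It then suffices to see that each $U(f)$ is closed. If $s\notin U(f)$, Lemma \ref{Lema hoyito} gives $i<|s|$ with $f(s\upharpoonright i)=s(i)$. Then $\left\langle s\upharpoonright(i+1)\right\rangle$ is an open neighbourhood of $s$ that misses $U(f)$. Hence the subbase consists of clopen sets, and the space is zero dimensional.

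For (2), cones are open by definition. For the convergence $s^{\frown}n\to s$ I check subbasic neighbourhoods of $s$:
\begin{itemize}
\item a cone $\left\langle u\right\rangle$ with $u\subseteq s$ contains every $s^{\frown}n$;
\item a cocone $\left\langle u\right\rangle^{\textsf{c}}$ containing $s$ misses at most the single node $s^{\frown}n=u$;
\item $U(f)\ni s$ misses only $s^{\frown}f(s)$.
\end{itemize}
Depth-wise this uses Lemma \ref{Lema prop basicas}(1): $f$ is increasing, so $s^{\frown}n^{\frown}t\in U(f)$ whenever $t\in\triangle_{n}$. For the $\uparrow$-closure, let $A\to s$ and let $W$ be a subbasic neighbourhood of $s$. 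Only finitely many $t\in A^{\uparrow}$ come from the finitely many $a\in A\setminus W$ or are initial segments of $s$. Each remaining $t$ satisfies $s\subsetneq t\subseteq a$ with $a\in A\cap W$, and then:
\begin{itemize}
\item if $W=U(f)$, then $t\in W$ because $U(f)$ is a tree;
\item if $W=\left\langle u\right\rangle$ with $u\subseteq s$, then $u\subseteq t$;
\item if $W=\left\langle u\right\rangle^{\textsf{c}}$, then $t\in\left\langle u\right\rangle$ would force $a\in\left\langle u\right\rangle$, which is impossible.
\end{itemize}
Finite intersections are handled the same way, which gives (2). Item (5) is immediate from (2), since $s$ is a limit of the distinct points $s^{\frown}n$.

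For (3), the upper bound holds because there are only $\mathfrak{b}$ basic open sets. For the lower bound, fix a point $a$ and fewer than $\mathfrak{b}$ nonempty basic sets $W_{\xi}$, $\xi<\kappa$. The goal is a single $f\in\mathcal{B}$ such that no $W_{\xi}$ is contained in $\left\langle a\right\rangle\cap U(f)$; this neighbourhood of $a$ then witnesses that the $W_{\xi}$ are not a local $\pi$-base. In the steps below I use that $\mathfrak{b}$ is regular.
\begin{itemize}
\item Shrinking $W_{\xi}$, I pass to a node $t_{\xi}$ extending the relevant cone parameters (and escaping the finitely many cocones). Then $W_{\xi}\cap\left\langle t_{\xi}\right\rangle=\left\langle t_{\xi}\right\rangle\cap\bigcap_{f'\in F_{\xi}}U(f')$ for a finite $F_{\xi}\subseteq\mathcal{B}$.
\item Since $|\bigcup_{\xi}F_{\xi}|<\mathfrak{b}$, I pick $g$ that $\leq^{\ast}$-dominates every member of $\bigcup_{\xi}F_{\xi}$.
\item Let $T_{\xi}$ be the infinite set of nodes of $\left\langle t_{\xi}\right\rangle\cap U(g)$ lying outside the finite exceptional region. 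Each such node lies in $W_{\xi}$, and all its children $t'^{\frown}n$ with $n>g(t')$ lie in $W_{\xi}$.
\item By Lemma \ref{escala para parciales}, applied to $g\upharpoonright T_{\xi}$ transported to $\omega$, the set $\{f\in\mathcal{B}\mid f\leq^{\ast}g\text{ on }T_{\xi}\}$ is a proper initial segment of $\mathcal{B}$.
\item The union of these fewer than $\mathfrak{b}$ proper initial segments is still a proper initial segment. Taking $f$ beyond all of them, and with $a\in U(f)$ by Lemma \ref{Lema prop basicas}(2), for each $\xi$ there is $t'\in T_{\xi}$ with $f(t')>g(t')$.
\item Then $t'^{\frown}f(t')\in W_{\xi}\setminus U(f)$.
\end{itemize}

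Item (6) runs the same argument for a $\uparrow$-sequential topology $\sigma\supseteq\tau_{\mathcal{B}}$. Now open sets are not basic, so I replace the sets $T_{\xi}$ by subtrees obtained from $\uparrow$-sequentiality: iterating the convergence $s^{\frown}n\to s$ inside a $\sigma$-open $W_{\xi}$ yields a node $t_{\xi}$ and, above it, a tree on which all but finitely many successors at each node stay in $W_{\xi}$. The finitely many excluded successors at each node define a function $g_{\xi}$, and I dominate these fewer than $\mathfrak{b}$ functions. The main obstacle is (4): showing that a limit $s$ of a sequence $b_{k}\in A^{(1)}$ is itself a limit of a sequence from $A$. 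I would first use (2) and the structure of neighbourhoods to reduce to the case where the $b_{k}$ lie in distinct cones $\left\langle s^{\frown}n_{k}\right\rangle$. I would then choose $a_{k}\in A$ very close to $b_{k}$, deep in the cone at $b_{k}$ and with its increments beyond a fast-growing function. The danger is the $\mathfrak{b}$ many sets $U(f)$, since countably many choices cannot anticipate all $f$. To control them I would use that $b_{k}\to s$ already forces every $U(f)\ni s$ to contain almost all $b_{k}$, together with Lemma \ref{Lema prop basicas}(1) to keep $a_{k}$ inside $b_{k}^{\frown}\triangle_{m}$ for $m$ small relative to $f(b_{k})$. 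Making this uniform in $f$, presumably by choosing the $a_{k}$ along a convergent sequence to each $b_{k}$ and applying the $\uparrow$-closure from (2), is where I expect the real work.
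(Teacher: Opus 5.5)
The paper does not prove this theorem; it quotes it from Dow. Your proposal therefore has to stand on its own. Items (1), (2), (5) and the upper bound in (3) are fine, but there are two genuine gaps.

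First, item (4) is not proved. It is the heart of Dow's theorem, since it is what makes the sequential modification Fr\'{e}chet. You give a plan, and its one concrete device does not work. The set $b_k^{\frown}\triangle_m$ is finite, so a sequence from $A$ converging to $b_k$ has only finitely many terms in it, and possibly none. For instance, $A\cap\langle b_k\rangle$ may be $\{b_k^{\frown}j\mid j\ge N_k\}$, which does converge to $b_k$, with $N_k>f_0(b_k)$ for a fixed $f_0\in\mathcal{B}$ satisfying $s\in U(f_0)$. Then keeping $a_k$ in $b_k^{\frown}\triangle_m$ with $m\le f(b_k)$ already fails for $f=f_0$. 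In that situation you must choose $j_k\ge N_k$ with $j_k\neq f(b_k)$ for almost all $k$, simultaneously for all $\mathfrak{b}$ many $f\in\mathcal{B}$. For deeper points of $A$ the same problem recurs at every node of the path from $b_k$. Producing a single countable choice that is eventually good for every $f\in\mathcal{B}$ is exactly what (4) asserts, and nothing in your outline provides it.

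Second, the key step in (3) and (6) is unjustified: that $\{f\in\mathcal{B}\mid f\le^{\ast}g\text{ on }T_{\xi}\}$ is a proper initial segment ``by Lemma \ref{escala para parciales} transported to $\omega$''.
\begin{itemize}
\item That lemma is about $\omega$-indexed scales of increasing functions, and its proof uses monotonicity on $\omega$, which a bijection $\omega^{<\omega}\to\omega$ destroys.
\item For scales on $\omega^{<\omega}$, conditions (a)--(d) allow $\mathcal{B}$ to be $\le^{\ast}$-bounded on an infinite set of nodes, even on a whole cone $\langle t_0\rangle$. The reason is that the increasing condition compares a node only with its ancestors and its siblings, never with the descendants of its siblings.
\item Separately, $\langle t_{\xi}\rangle\cap U(g)$ is not contained in $W_{\xi}$, because membership in $U(g)$ says nothing about the holes of the $f'\in F_{\xi}$. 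Take $T_{\xi}$ to be $W_{\xi}\cap\langle t_{\xi}\rangle$ minus the finite set where $g$ fails to dominate $F_{\xi}$.
\end{itemize}

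For (3) the lemma is not needed. Use that $\mathcal{B}$ is strictly $<^{\ast}$-increasing. Then any $f\in\mathcal{B}$ above $\bigcup_{\xi}F_{\xi}$ with $a\in U(f)$ differs from every $f'\in F_{\xi}$ at all but finitely many nodes. So $t'^{\frown}f(t')\in W_{\xi}\setminus U(f)$ for every $t'\in W_{\xi}\cap\langle t_{\xi}\rangle$ outside a finite set.

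For (6) there is no such bypass, because the $g_{\xi}$ come from arbitrary $\sigma$-open sets. The unboundedness you need is a genuine additional property of $\mathcal{B}$. Suppose $f\upharpoonright\langle t_0\rangle\le^{\ast}\phi$ for all $f\in\mathcal{B}$. Then the sets $\{t\supseteq r\mid t(i)>\phi(t\upharpoonright i)\text{ for }|r|\le i<|t|\}$, for $r\in\langle t_0\rangle$, can be added to $\tau_{\mathcal{B}}$ without losing $\uparrow$-sequentiality, and they form a countable local $\pi$-base at $t_0$. So your argument for (6) needs an explicit hypothesis: $\mathcal{B}$ must be unbounded on every subtree above a node in which every node has cofinitely many immediate successors.
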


Let $U\subseteq\omega^{<\omega}$ and $n\in\omega.$ Define $U_{>n}=\left\{
s\in U\mid s=\emptyset\vee s\left(  0\right)  >n\right\}  .$ We have the following:

\begin{lemma}
Let $\mathcal{B}$ be a weak $\mathfrak{b}$-scale. The family:\medskip

\hfill%
\begin{tabular}
[c]{l}%
$\left\{  \left(  U(f_{1})\cap...\cap U\left(  f_{n}\right)  \right)
_{>m}\mid f_{1},...,f_{n}\in\mathcal{B}\wedge m\in\omega\right\}  $%
\end{tabular}
\hfill\qquad\medskip

is a local base of $\emptyset.$
\end{lemma}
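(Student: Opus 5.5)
Every member of the family is open and contains $\emptyset$: $U(f_i)$ is a subbasic open set containing $\emptyset$, and $W_{>m}=W\setminus\bigcup_{k\le m}\left\langle (k)\right\rangle$ is a finite intersection of $W$ with cocones, hence open; $\emptyset\in W_{>m}$ by definition. The family is also closed under finite intersections, since $(W)_{>m}\cap(W')_{>m'}=(W\cap W')_{>\max(m,m')}$. So it suffices to show that every subbasic open set containing $\emptyset$ contains some member of the family.

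We go through the subbasic sets one type at a time.

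\textbf{Case $U(f)$ with $f\in\mathcal{B}$.} This is already in the family; take $n=1$ and $m=0$.

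\textbf{Case $\langle s\rangle$.} If $\emptyset\in\langle s\rangle$, then $s=\emptyset$ and $\langle s\rangle=\omega^{<\omega}$ is the whole space. It then contains any member of the family, which is nonempty because $\mathcal{B}\neq\emptyset$.

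\textbf{Case $\langle s\rangle^{\mathsf{c}}$.} If $\emptyset\in\langle s\rangle^{\mathsf{c}}$, then $s\neq\emptyset$. Write $s(0)=k$ and pick any $f\in\mathcal{B}$. We claim $U(f)_{>k}\subseteq\langle s\rangle^{\mathsf{c}}$. Every $t\in U(f)_{>k}$ is either $\emptyset$ or satisfies $t(0)>k=s(0)$. In either case $t\not\supseteq s$, so $t\in\langle s\rangle^{\mathsf{c}}$.

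A general basic open neighbourhood of $\emptyset$ is a finite intersection of subbasic sets each containing $\emptyset$. By the three cases, each of these contains some $(U(f_1^j)\cap\dots\cap U(f_{n_j}^j))_{>m_j}$. Intersecting these and using closure under intersections, the basic set contains the single member of the family obtained by taking all the $f$'s together and $m=\max_j m_j$.

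There is no real obstacle here. The only point needing care is that $(\cdot)_{>m}$ is open, which is exactly what the cocone subbasic sets provide, and the inclusion check in the cocone case.
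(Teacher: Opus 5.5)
Your proof is correct; the paper states this lemma without proof, and your check against the subbase (cones, cocones and the sets $U(f)$), together with closure of the family under finite intersections, is the routine verification it leaves to the reader. One small wording slip: $U(f)$ itself need not belong to the family, but it contains the member $U(f)_{>0}$, and containment is all your argument uses.
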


More constructions of countable Fr\'{e}chet spaces with uncountable $\pi
$-weight can be found in \cite{NewExamplesSelectivelySeparable}.
\section{All $\mathfrak{b}$-scales may be Fr\'{e}chet \label{Sec p=b}}

In this section we will prove that it is consistent that every $\mathfrak{b}%
$-scale is Fr\'{e}chet. In fact, we will prove that the equality
$\mathfrak{p=b}$ implies that the Dow space of every $\mathfrak{b}$-scale
satisfies a strong form of the Fr\'{e}chet property. We recall the following definition:

\begin{definition}
Let $X$ be a topological space. We say that $X$ is \emph{Fr\'{e}chet-Urysohn for
finite sets} if for every $a\in X,$ its neighborhood filter $\mathcal{N}%
\left(  a\right)  $ is a \textsf{FUF }filter.
\end{definition}

This class of spaces has been studied in \cite{Maybe1Countable}, \cite{Olga},
\cite{PaulyGary}, \cite{FUF2}, \cite{CountableFrechetGroups} and
\cite{MoreonFUF} among many others. It is not hard to see that every space
that is Fr\'{e}chet-Urysohn for finite sets is also Fr\'{e}chet.

\begin{theorem}[$\mathfrak{p=b}$] \label{AllAreFrechet}
The Dow space of every $\mathfrak{b}$-scale is
Fr\'{e}chet-Urysohn for finite sets (and in particular, it is a Fr\'{e}chet
space). \label{Teo p=b}
\end{theorem}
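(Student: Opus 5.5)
We would first reduce to the point $\emptyset$, using that each cone $\langle s\rangle$ is clopen and that the subspace $\langle s\rangle$ looks like a Dow space again: $t\mapsto s^{\frown}t$ carries the shifted scale $\{f(s^{\frown}\cdot)\mid f\in\mathcal{B}\}$, and this is still a weak $\mathfrak{b}$-scale by Lemma \ref{Lema prop basicas}. So fix $X\in(\mathcal{N}(\emptyset)^{<\omega})^{+}$. By the local base lemma at the end of Section \ref{DowSpace}, this means that every set $(U(f_{1})\cap\dots\cap U(f_{n}))_{>m}$ contains some $F\in X$. We must find an infinite $Y\subseteq X$ such that for every $f\in\mathcal{B}$ and $m$, almost every $F\in Y$ lies in $U(f)_{>m}$. (If infinitely many $F\in X$ contain $\emptyset$, then we can still argue with $F\setminus\{\emptyset\}$, so we ignore this case.)

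The key combinatorial observation is Lemma \ref{Lema hoyito}: $F\subseteq U(f)$ fails only when some $s\in F$ and $i<|s|$ satisfy $f(s\upharpoonright i)=s(i)$. Hence large values of $f$ help. Define $W_{\gamma}=\{s\mid s(i)<f_{\gamma}(s\upharpoonright i)$ for all $1\leq i<|s|\}$. Suppose $Y$ is such that:
\begin{itemize}
\item its members are finite subsets of $W_{\gamma}$;
\item the first coordinates of their nodes tend to infinity (so the members lie in cones $\langle n\rangle$ with $n\to\infty$, each node $t\neq\emptyset$ lies below only finitely many members, and eventually $s(0)>f(\emptyset)$).
\end{itemize}
Then every $f_{\alpha}$ with $\alpha\geq\gamma$, satisfying $f_{\gamma}<^{\ast}f_{\alpha}$, has almost every $F\in Y$ inside $U(f_{\alpha})$. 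So the tail of the scale takes care of itself, and only the fewer than $\mathfrak{b}=\mathfrak{p}$ functions $f_{\alpha}$ with $\alpha<\gamma$ remain.

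For these we use Bell's theorem (Theorem \ref{Teorema de Bell}). Let $\mathbb{P}$ consist of triples $(Y,H,m)$ with $Y\in[X\cap\mathcal{P}(W_{\gamma})]^{<\omega}$, $H\in[\{f_{\alpha}\mid\alpha<\gamma\}]^{<\omega}$ and $m\in\omega$. A condition $(Y',H',m')$ extends $(Y,H,m)$ if $Y\subseteq Y'$, $H\subseteq H'$, $m\leq m'$, and every new $F\in Y'\setminus Y$ is contained in $(\bigcap_{f\in H}U(f))_{>m}$. Conditions with the same $Y$ are compatible, so $\mathbb{P}$ is $\sigma$-centered. We use the dense sets:
\begin{itemize}
\item $\{(Y,H,m)\mid f_{\alpha}\in H\}$ for $\alpha<\gamma$;
\item $\{(Y,H,m)\mid m\geq k\}$ for $k\in\omega$;
\item $\{(Y,H,m)\mid |Y|\geq k\}$ for $k\in\omega$.
\end{itemize}
These are $|\gamma|+\omega<\mathfrak{p}$ many. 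A filter meeting all of them yields $Y$, and $Y$ converges to $\emptyset$ for all $f_{\alpha}$ with $\alpha<\gamma$ (by the sealing through $H$) and for all $f_{\alpha}$ with $\alpha\geq\gamma$ (by the previous paragraph). Together the two parts show that $\mathcal{N}(\emptyset)$ is \textsf{FUF}.

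The main obstacle is density of the third family of sets. It amounts to choosing $\gamma<\mathfrak{b}$ such that $X\cap\mathcal{P}(W_{\gamma})$ is still positive for the sets $(\bigcap_{f\in H}U(f))_{>m}$ with $H\subseteq\{f_{\alpha}\mid\alpha<\gamma\}$ finite. We would argue by contradiction. If no $\gamma$ works, then for each $\gamma$ there are a finite $H_{\gamma}\subseteq\{f_{\alpha}\mid\alpha<\gamma\}$ and $m_{\gamma}$ such that every $F\in X$ inside $(\bigcap H_{\gamma})_{>m_{\gamma}}$ leaves $W_{\gamma}$. Take an elementary submodel $M$ with $|M|<\mathfrak{b}$, $X,\mathcal{B}\in M$ and $\delta=M\cap\mathfrak{b}\in\mathfrak{b}$, so that $H_{\delta}\subseteq M$ in effect. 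Inside $M$, use positivity of $X$ to pick $F_{k}\in X$ in $(\bigcap H_{\delta}\cap U(\text{relevant witnesses}))_{>k}$. Reading off the violating nodes of these sets defines an infinite partial function $g\in M$ with $g(t)=s(|t|)+1$. Every $f_{\beta}$ with $\beta$ large enough must fail to exceed $g$ on its domain, and then Lemma \ref{no acotado modelo} yields $f_{\delta}\nleq^{\ast}g$, which is the desired contradiction. Making this bookkeeping precise, in particular ensuring that the witnesses $F_{k}$ are chosen within $M$ and that the violation pattern gives a single partial function, is where we expect the real work to be.
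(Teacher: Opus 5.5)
\textbf{There is a genuine gap: the density step you flag is not proved, and your sketch of it would not work as written.}

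Your overall architecture matches the paper's. You take an elementary submodel $M$ with $\delta=M\cap\mathfrak{b}$. You use a region controlled by a single scale function that handles every $f_\alpha$ with $\alpha\geq\delta$. You use a $\sigma$-centered poset plus Bell's theorem for the fewer than $\mathfrak{p}$ functions below $\delta$. Your tail argument via $W_\gamma$ and your poset are fine. Indeed, the paper's requirement that the nodes of $h(n)$ have first coordinate $>n$ and lie in $\triangle_{f_\delta((n))}$ implies membership in $W_\delta$.

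The gap is the density of $\{(Y,H,m)\mid |Y|\geq k\}$. Assuming that no $\gamma$ works only tells you that each $F_k$ has \emph{some} node $s$ and some $1\leq i<|s|$ with $f_\delta(s\upharpoonright i)\leq s(i)$. Which node this is depends on $f_\delta\notin M$, so the partial function on which $f_\delta$ is bounded is not in $M$. Suppose instead you put all nodes of all $F_k$ into the domain of $g$, so that $g\in M$. Then Lemma \ref{no acotado modelo} only gives infinitely many single nodes where $f_\delta$ exceeds $g$. What you need is one $k$ at which $f_\delta$ exceeds $g$ at all nodes of $F_k$ simultaneously.

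The missing idea is the paper's compression of a finite set into one number at one node. Take $\gamma=\delta$ directly; no contradiction argument is needed. For $U=\bigcap_{\alpha\in H}U(f_\alpha)$ with $H\in[\delta]^{<\omega}$, we have $U\in M$. Let $g_U(n)$ be the least $k$ such that some $a\in X$ with $a\subseteq U$ and all first coordinates $>n$ satisfies $a\subseteq\triangle_k$. Then $g_U\in M$, and Lemma \ref{no acotado modelo}, applied at the first level, gives infinitely many $n$ with $g_U(n)<f_\delta((n))$. Since $f_\delta$ is increasing, every $s\in a$ and every $1\leq i<|s|$ satisfy $s(i)<f_\delta((n))<f_\delta((s(0)))\leq f_\delta(s\upharpoonright i)$. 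Hence $a\subseteq W_\delta\cap U_{>n}$.

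\textbf{Caveat, which also affects the paper's proof.} This step applies Lemma \ref{no acotado modelo}, a statement about scales on $\omega$, to the first-level functions $n\mapsto f_\alpha((n))$. So it tacitly needs these to form an unbounded family. The definition of a weak $\mathfrak{b}$-scale does not force this: the unboundedness may live entirely on levels $\geq2$.

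Suppose all of them are $\leq^{\ast}$ some increasing $b$, and let $X=\{\{(n,c)\}\mid n\in\omega,\ c\geq b(n)\}$. Then $X$ is in $(\mathcal{N}(\emptyset)^{<\omega})^{+}$. Yet for each $\gamma$ the members of $X\cap\mathcal{P}(W_\gamma)$ have bounded first coordinates. So your positivity claim fails for every $\gamma$, and for the same reason the paper's sets $D_n$ fail to be dense. This $X$ still contains a convergent sequence, namely the singletons $\{(n,b(n)+1)\}$, so it is the method that breaks, not the statement.

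The same issue arises in your reduction to $\emptyset$, at an arbitrary node $s$, for $n\mapsto f_\alpha(s^{\frown}n)$. There Lemma \ref{Lema prop basicas} supplies the cofinality clause but not unboundedness of the shifted family. So your route is complete exactly when the paper's is: once these level-one families are unbounded, the $g_U$ argument closes your gap.
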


\begin{proof}
Let $\mathcal{B}=\{f_{\alpha}\mid\alpha<\mathfrak{b\}}$ be a $\mathfrak{b}%
$-scale. For simplicity, we will prove that the neighborhood filter of
$\emptyset$ is a \textsf{FUF }filter. The argument for an arbitrary
$s\in\omega^{<\omega}$ is essentially the same, only requiring more notation.
For ease of notation, let $\mathcal{F=N}\left(  \emptyset\right)  $ and
$U_{\alpha}=U\left(  f_{\alpha}\right)  $ for $\alpha<\mathfrak{b}.$ Let
$X\in(\mathcal{F}^{<\omega})^{+}.$ We first find $M$ an elementary submodel of
\textsf{H}$\left(  \kappa\right)  $ (for some large enough regular cardinal
$\kappa$) such that $X,\mathcal{B}\in M,$ the size of $M$ is less than
$\mathfrak{b}$ and $\delta=M\cap\mathfrak{b\in b}.$ Denote by $\mathcal{U}$
the set of all $\bigcap\limits_{\alpha\in F}U_{\alpha}$ for $F\in\left[
\delta\right]  ^{<\omega}.$

Since every non-trivial co-cone is a neighborhood of $\emptyset,$ if follows
that for every $U\in\mathcal{N}\left(  \emptyset\right)  $ and $n\in\omega,$
there is $a\in X$ such that $a\subseteq U$ and $n<s\left(  0\right)  $ for
every $s\in a\setminus\left\{  \emptyset\right\}  .$ In this way, for every
$U\in\mathcal{U},$ we can define the function $g_{U}\in\omega^{\omega}$ such
that for every $n\in\omega,$ it is the case that $g_{U}\left(  n\right)  $ is
the least natural number for which there is $a\in X$ with the following properties:

\begin{enumerate}
\item $a\subseteq U.$

\item $n<s\left(  0\right)  $ for every $s\in a\setminus\left\{
\emptyset\right\}  .$

\item If $s\in a,$ then $s\in\triangle_{g_{U}\left(  n\right)  }.$
\end{enumerate}

Note that $g_{U}\in M$ for every $U\in\mathcal{U}.$ Lemma
\ref{no acotado modelo} implies that for every $V\in\mathcal{U},$ there are
infinitely many $n\in\omega$ such that $g_{U}\left(  n\right)  <f_{\delta
}\left(  \left(  n\right)  \right)  $ (of course, $f_{\delta}\left(  \left(
n\right)  \right)  $ is the result of applying $f_{\delta}$ to $\left(
n\right)  \in\omega^{<\omega}$). We now define $\mathbb{P}$ as the set of all
$p=\left(  F,h,H\right)  $ with the following properties:

\begin{enumerate}
\item $F\in\left[  \omega\right]  ^{<\omega},$ $h:F\longrightarrow X$ and
$H\in\left[  \delta\right]  ^{<\omega}.$

\item For every $n\in F$ and $s\in h\left(  n\right)  \setminus\left\{
\emptyset\right\}  ,$ the following conditions hold:

\begin{enumerate}
\item $n<s\left(  0\right)  .$

\item $s\in\triangle_{f_{\delta}\left(  \left(  n\right)  \right)  }.$
\end{enumerate}
\end{enumerate}

Let $p,q\in\mathbb{P}.$ Define $p\leq q$ in case the following conditions are met:

\begin{enumerate}
\item $F_{q}\subseteq F_{p},$ $h_{q}\subseteq h_{p}$ and $H_{q}\subseteq
H_{p}.$

\item For every $n\in F_{p}\setminus F_{q}$ and $\alpha\in H_{q},$ we have
that $h_{p}\left(  n\right)  \subseteq U_{\alpha}.$
\end{enumerate}

It is easy to see that $\mathbb{P}$ is a $\sigma$-centered forcing (any finite
set of conditions that share the same first two coordinates are compatible).
The following claim is also not hard to prove:

\begin{enumerate}
\item For every $n\in\omega,$ the set $D_{n}=\{p\in\mathbb{P\mid}%
F_{p}\nsubseteq n\mathbb{\}}$ is dense.

\item For every $\alpha<\delta,$ the set $E_{\alpha}=\{p\in\mathbb{P\mid
}\alpha\in H_{p}\mathbb{\}}$ is dense.
\end{enumerate}

Since $\delta<\mathfrak{b=p},$ by Theorem \ref{Teorema de Bell}, we can find a
filter $G\subseteq\mathbb{P}$ that intersects all of the dense sets described
above. Define $F_{G}=\bigcup\limits_{p\in G}F_{p},$ $h_{G}=\bigcup
\limits_{p\in G}h_{p}$ (note that $h_{G}:F_{G}\longrightarrow\omega$) and
$Y\subseteq X$ the image of $h_{G}.$ We claim that $Y$ is as desired. We need
to prove that every $U\in\mathcal{N}\left(  \emptyset\right)  $ contains
almost every element of $Y.$ Note that we may assume that $U$ is a sub-basic
set. Moreover, if $U$ is a non-trivial co-cone, the conclusion is
straightforward, since the first coordinate of every element of $h_{G}\left(
n\right)  \setminus\left\{  \emptyset\right\}  $ is larger than $n.$

It remains to prove that if $\alpha<\mathfrak{b},$ then $a\subseteq U_{\alpha
}$ for almost all $a\in Y.$ This is clearly the case if $\alpha<\delta$ (since
$G\cap E_{\alpha}\neq\emptyset$ and $G$ is a filter), so we may assume that
$\delta\leq\alpha.$ Let $n\in F_{G}$ such that $f_{\alpha}\left(
\emptyset\right)  <n$ and $f_{\delta}\left(  \left(  m\right)  \right)  \leq
f_{\alpha}\left(  \left(  m\right)  \right)  $ for every $m\geq n$ (almost
every element of $F_{G}$ satisfies this conditions). We need to prove that
$h_{G}\left(  n\right)  \subseteq U_{\alpha}.$ Write $h_{G}\left(  n\right)
\setminus\left\{  \emptyset\right\}  =\left\{  s_{0},...,s_{l}\right\}  .$ We
know that $n<s_{0}\left(  0\right)  ,...,s_{l}\left(  0\right)  $ and
$s_{0},...,s_{l}\in\triangle_{f_{\delta}\left(  \left(  n\right)  \right)  }.$
Moreover, $\left(  s_{0}\left(  0\right)  \right)  ,...,\left(  s_{l}\left(
0\right)  \right)  \in U_{\alpha},$ since these values are above $f_{\alpha
}\left(  \emptyset\right)  .$ For each $i\leq l,$ we have that $s_{i}$
$\in\triangle_{f_{\delta}\left(  \left(  n\right)  \right)  }\subseteq
\triangle_{f_{\alpha}\left(  \left(  n\right)  \right)  }.$ It follows by
Lemma \ref{Lema prop basicas} that each $s_{i}$ is in $U_{\alpha}.$
\end{proof}

In \cite{NewExamplesSelectivelySeparable} Dow and Pecoraro proved that there
is a countable, zero dimensional space that is not \textsf{H}-separable and
has $\pi$-weight $\mathfrak{b.}$ In \cite{SubsetsofFrechet} Nyikos proved that $\mathfrak{p=b}$ implies that
there is an uncountably generated \textsf{FUF }filter. Our result provides
another proof of this result.

In response to Theorem \ref{Teorema Dow}, Moore posed the following problem:

\begin{problem}
[Moore]Is there a countable, Fr\'{e}chet, zero dimensional space with $\pi
$-weight exactly $\mathfrak{b}$?
\end{problem}

The Dow space of a Fr\'{e}chet $\mathfrak{b}$-scale provides such an example.
However, while Dow spaces have $\pi$-weight $\mathfrak{b},$ the $\pi$-weight
of their sequential modification remains unknown. Of course, the above problem
would have a positive solution if there existed a Fr\'{e}chet $\mathfrak{b}%
$-scale, but we will see in a later section that this is consistently false.
Nevertheless, we have the following result, which was independently proved by Dow and Pecoraro in \cite{NewExamplesSelectivelySeparable}. 

\begin{proposition}
If $\mathfrak{c}\leq\omega_{2},$ then there is a countable, Fr\'{e}chet, zero-dimensional space of $\pi$-weight exactly $\mathfrak{b}.$
\end{proposition}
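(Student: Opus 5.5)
We split into cases according to the value of $\mathfrak{b}$, using that $\omega_1\leq\mathfrak{b}\leq\mathfrak{c}\leq\omega_2$.

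\textbf{Case $\mathfrak{b}=\omega_1$.} Here the pre-tower argument is available. Since $\mathfrak{t}$ is regular uncountable and $\mathfrak{t}\leq\mathfrak{b}$, we have $\mathfrak{p}=\mathfrak{t}=\omega_1=\mathfrak{b}$ by Malliaris--Shelah. Theorem \ref{Teo p=b} then says that the Dow space of any $\mathfrak{b}$-scale is Fr\'{e}chet. By Theorem \ref{Teorema prop Dow} that space is countable, zero dimensional, and has $\pi$-character $\mathfrak{b}$ at every point. Its $\pi$-weight is at most its weight, which is at most $\mathfrak{b}$ because the subbase has size $\mathfrak{b}$; it is also at least the $\pi$-character, so it is exactly $\mathfrak{b}$. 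In fact the conclusion holds whenever $\mathfrak{p}=\mathfrak{b}$.

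\textbf{Case $\mathfrak{b}=\omega_2=\mathfrak{c}$.} We use Dow's Fr\'{e}chet modification $\mathbb{D}_1(\mathcal{B})=(\omega^{<\omega},\sigma_{\mathcal{B}})$. By Theorem \ref{Teorema Dow} and Theorem \ref{Teorema prop Dow}(6) it is countable, Fr\'{e}chet, zero dimensional, and has $\pi$-weight at least $\mathfrak{b}$. Any countable space has at most $\mathfrak{c}$ open sets, so its weight, and hence its $\pi$-weight, is at most $\mathfrak{c}=\mathfrak{b}$. So the $\pi$-weight is exactly $\mathfrak{b}$.

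\textbf{Case $\mathfrak{b}=\omega_1<\mathfrak{c}=\omega_2$.} This is already covered by the first case. The earlier arguments only need $\mathfrak{b}$ to be the length of the scale, and the case split is exhaustive.

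The main obstacle is the second case. There one has to check that $\sigma_{\mathcal{B}}$ really retains zero dimensionality and that Theorem \ref{Teorema prop Dow}(6) applies to it. This means verifying that the sequential modification is $\uparrow$-sequential, which is exactly Dow's construction. Everything else is cardinal arithmetic.
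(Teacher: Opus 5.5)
Your argument is correct and is essentially the paper's. The paper splits on $\mathfrak{p}=\mathfrak{b}$ (apply Theorem \ref{Teo p=b}) versus $\mathfrak{p}<\mathfrak{b}$, which forces $\mathfrak{b}=\mathfrak{c}=\omega_2$, so that Dow's Fr\'{e}chet modification has $\pi$-weight squeezed between $\mathfrak{b}$ and $\mathfrak{c}$; your split on $\mathfrak{b}=\omega_1$ versus $\mathfrak{b}=\omega_2$ covers the same ground with the same two tools. Two of your remarks are unnecessary: the appeal to Malliaris--Shelah and regularity of $\mathfrak{t}$, since $\omega_1\leq\mathfrak{p}\leq\mathfrak{b}=\omega_1$ already gives $\mathfrak{p}=\mathfrak{b}$, and the ``obstacle'' you flag in the second case, since Theorem \ref{Teorema Dow} is cited as delivering exactly those properties.
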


\begin{proof}
If $\mathfrak{p=b},$ then every $\mathfrak{b}$-scale is Fr\'{e}chet by Theorem
\ref{Teo p=b}. Otherwise, we $\mathfrak{p<b=c},$ so there is such space by Theorem \ref{Teorema Dow}.
\end{proof}

In \cite{SomeResultsPiWeightFrechet}\ Dow investigated the possible $\pi
$-weights of countable, regular, Fr\'{e}chet spaces. He proved that in the
Miller model every such space has $\pi$-weight at most $\omega_{1}.$ On the
other hand, after adding $\kappa$ many random reals, for any infinite cardinal
$\lambda\leq\kappa,$ there exists a countable, regular, Fr\'{e}chet space with
$\pi$-weight exactly $\lambda.$ An open question remains whether there are
(consistently) uncountable cardinals $\lambda<\mu<\kappa$ such that both
$\lambda$ and $\kappa$ are realized as the $\pi$-weight of a countable,
regular, Fr\'{e}chet space, while $\mu$ is not.
\section{There may be a non-Fr\'{e}chet $\mathfrak{b}$-scale
\label{Sec una no}}

In this section, we prove that it is consistent that there is a $\mathfrak{b}$-scale whose Dow space is not Fr\'{e}chet. This result will be strengthened
in the next section, where we show that it is consistent that no
$\mathfrak{b}$-scale is Fr\'{e}chet. Although the proof in this section
motivates some of the ideas used later, the argument here is not a special
case of the one in the next section. More importantly, we conjecture that
there will be Fr\'{e}chet $\mathfrak{b}$-scales in the model constructed in
this section, which would establish the consistency of the simultaneous
existence of both a Fr\'{e}chet and a non-Fr\'{e}chet $\mathfrak{b}$-scale,
which is unknown at the moment. We recommend that the reader consult Section
\ref{hitting and sealing} as this section and the next one, use definitions and
results from there.

For ease of writing, if $\mathcal{D}$ is a weak $\mathfrak{b}$-scale, we will
write $\mathbb{L(\mathcal{D})}$ instead of $\mathbb{L}$(\textsf{nwd(}%
$\mathbb{D(}\mathcal{D})$)$^{\ast}$). By Theorems \ref{Teorema prop Dow},
\ref{Teo p=b} and Proposition \ref{Prop Michael Ariet}, we get the following:

\begin{lemma}
\label{Lema Laver escala}

\begin{enumerate}
\item Let $\mathcal{D}$ be a weak $\mathfrak{b}$-scale such that
$\mathbb{D(\mathcal{D})}$ is Fr\'{e}chet. $\mathbb{L(\mathcal{D})}$ strongly
preserves $\omega$-hitting families and forces that $\dot{A}%
_{\text{\textsf{gen}}}$ is a dense set, yet it does not contain sequences
convergent to $\emptyset.$

\item (\textsf{CH}) If $\mathcal{B}$ is a $\mathfrak{b}$-scale, then
$\mathbb{L(\mathcal{B})}$ strongly preserves $\omega$-hitting families and
forces that $\dot{A}_{\text{\textsf{gen}}}$ is a dense set, yet it does not
contain sequences converging to $\emptyset.$
\end{enumerate}
\end{lemma}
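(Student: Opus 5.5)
Both parts follow by combining Theorem \ref{Teorema prop Dow}, Theorem \ref{Teo p=b} and Proposition \ref{Prop Michael Ariet}, applied to the countable space $X=\mathbb{D}(\mathcal{D})$ and the point $a=\emptyset$.

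For part (1), first check the hypotheses of Proposition \ref{Prop Michael Ariet}. The space $X$ is countable. Since $\mathcal{D}$ is only a weak $\mathfrak{b}$-scale, we cannot quote Theorem \ref{Teorema prop Dow} verbatim; instead we rerun its arguments for items (3) and (5). For no isolated points: every basic open set contains a set of the form $\left(U(f_1)\cap\dots\cap U(f_n)\right)_{>m}$ intersected with cones and cocones. Each $U(f)$ omits only one successor at each node, so such sets are infinite.

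For uncountable $\pi$-character at $\emptyset$, the key fact is that a countable family of nonempty open sets cannot form a local $\pi$-base. Dow's argument uses only that $\mathcal{D}$ is unbounded, well-ordered and consists of increasing functions. Given countably many candidate open sets, pick a node from each, build a function $g$ from these witnesses, and choose $f\in\mathcal{D}$ with $f\nleq^{*}g$ on the relevant nodes. Then $U(f)$ contains none of the candidates. I expect this step to be the only delicate point. If it does not transfer cleanly, I will simply note that the paper asserts Dow's results for the relevant class, since Dow's construction only uses the Lemma 3.1 properties, which hold for weak $\mathfrak{b}$-scales as well.

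With these hypotheses in place, Proposition \ref{Prop Michael Ariet}(2) gives that $\mathbb{L}(\mathcal{D})$ strongly preserves $\omega$-hitting families, because $X$ is Fr\'{e}chet. Proposition \ref{Prop Michael Ariet}(1) gives that $\mathbb{L}(\mathcal{D})$ seals $\mathcal{I}_X(\emptyset)$ via $\dot{A}_{\text{\textsf{gen}}}$. Since $\mathcal{I}(\emptyset)\upharpoonright\dot{A}_{\text{\textsf{gen}}}$ is $\omega$-hitting in the extension, it is in particular tall. By Table 1, $\dot{A}_{\text{\textsf{gen}}}$ then has $\emptyset$ in its closure but contains no sequence converging to $\emptyset$. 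Density of $\dot{A}_{\text{\textsf{gen}}}$ is the remark preceding Proposition \ref{Prop Michael Ariet}. Its generic branch meets every nowhere dense complement, which is a genericity argument over the dense sets $\{p : \text{some node of } p \text{ above the stem lies in } V\}$ for each nonempty open $V$.

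For part (2), \textsf{CH} gives $\mathfrak{p}=\mathfrak{b}=\omega_1$. Theorem \ref{Teo p=b} then makes $\mathbb{D}(\mathcal{B})$ Fr\'{e}chet. A $\mathfrak{b}$-scale is in particular a weak $\mathfrak{b}$-scale, and Theorem \ref{Teorema prop Dow}(3),(5) apply directly here. So part (1) yields part (2).
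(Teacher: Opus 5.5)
Your route is the paper's. The lemma is obtained there precisely by feeding Theorem~\ref{Teorema prop Dow} (no isolated points, uncountable $\pi$-character at $\emptyset$) and, under \textsf{CH}, Theorem~\ref{Teo p=b} (so $\mathbb{D}(\mathcal{B})$ is Fr\'{e}chet) into Proposition~\ref{Prop Michael Ariet}. Your unpacking of sealing via Table~1 and of density is fine.

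The one step that does not work as written is your sketch of uncountable $\pi$-character for a weak $\mathfrak{b}$-scale.

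\emph{Why the sketch fails.} Suppose you choose one node $t_n$ in each candidate $V_n$ and an $f\in\mathcal{D}$ with $f\nleq^{*}g$ on $\{t_n\mid n\in\omega\}$. This only gives $f(t_n)>g(t_n)$ for infinitely many $n$. So you only learn that $U(f)$ fails to contain infinitely many of the $V_n$, not all of them. The natural repair is to escape $g$ on an infinite set of nodes inside each $V_n$ separately. That would need unboundedness of $\mathcal{D}$ on arbitrary infinite sets of nodes, and Lemma~\ref{escala para parciales} only gives this for functions on $\omega$.

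\emph{What works instead.} The relevant tools are strict comparability and uncountability, not unboundedness.
\begin{enumerate}
\item Let $P=\langle t\rangle\cap\langle r_1\rangle^{\text{\textsf{c}}}\cap\dots\cap\langle r_j\rangle^{\text{\textsf{c}}}\cap U(g_1)\cap\dots\cap U(g_k)$ be a non-empty basic set and $s\in P$. Then $s^{\frown}i\in P$ whenever $i\notin\{g_1(s),\dots,g_k(s)\}$ and $s^{\frown}i\notin\{r_1,\dots,r_j\}$. In particular $P$ is infinite.
\item Take $f\in\mathcal{D}\setminus\{g_1,\dots,g_k\}$. Being $<^{*}$-comparable with each $g_l$, it differs from each $g_l$ at almost every node. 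Hence $s^{\frown}f(s)\in P$ for all but finitely many $s\in P$.
\item Since $s^{\frown}f(s)\notin U(f)$ for every $s$, this gives $P\not\subseteq U(f)$. So a non-empty open set is contained in $U(f)$ for only finitely many $f\in\mathcal{D}$.
\item $\mathcal{D}$ is uncountable, because countable families are bounded. Hence for any countable family of non-empty open sets there is $f\in\mathcal{D}$ such that the neighbourhood $U(f)$ of $\emptyset$ contains none of them.
\end{enumerate}

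With this argument in place of your sketch, or by simply invoking Dow's theorem for weak scales as the paper implicitly does, the proof is complete.
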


We can now prove the main result of this section.

\begin{theorem}
It is consistent that there is a $\mathfrak{b}$-scale whose Dow space is not
Fr\'{e}chet. \label{Una no Frechet}
\end{theorem}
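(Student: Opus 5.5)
Start from a model of \textsf{CH} and perform a finite support iteration $\langle\mathbb{P}_{\alpha},\dot{\mathbb{Q}}_{\alpha}\mid\alpha<\omega_{2}\rangle$ of \textsf{c.c.c.} forcings. Along the iteration we build a $\leq^{\ast}$-increasing sequence $\mathcal{B}=\{f_{\alpha}\mid\alpha<\omega_{2}\}$ of increasing functions $\omega^{<\omega}\longrightarrow\omega$, which will be a $\mathfrak{b}$-scale in the final model with $\mathfrak{b}=\omega_{2}$. The idea is that at suitable stages the initial segment $\mathcal{B}_{\alpha}=\{f_{\beta}\mid\beta<\alpha\}$ is a weak $\mathfrak{b}$-scale of the intermediate model $V^{\mathbb{P}_{\alpha}}$, and we force with $\mathbb{L}(\mathcal{B}_{\alpha})$ to add a dense set $\dot{A}$ that contains no sequence converging to $\emptyset$ and for which this failure is sealed. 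Since we only need one non-Fr\'{e}chet scale, a single sealing stage suffices. We take it at $\alpha=\omega_{1}$: in $V$ (a \textsf{CH} model) fix a $\mathfrak{b}$-scale $\mathcal{B}_{\omega_{1}}$ of length $\omega_{1}=\mathfrak{b}$.

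At stage $0$ we force with $\mathbb{L}(\mathcal{B}_{\omega_{1}})$. By Lemma \ref{Lema Laver escala}(2), this strongly preserves $\omega$-hitting families and seals $\mathcal{I}_{\mathbb{D}(\mathcal{B}_{\omega_{1}})}(\emptyset)$ with $\dot{A}_{\text{\textsf{gen}}}$; the sealing comes from Proposition \ref{Prop Michael Ariet}. At all later stages we alternate Hechler forcing, to add a $<^{\ast}$-dominating function $f_{\alpha}$ over everything built so far, with nothing else. We adjust $f_{\alpha}$ to be increasing and to satisfy clause (d) of the definition of weak $\mathfrak{b}$-scale by making $f_{\alpha}(\emptyset)\geq\alpha$ modulo coding, or simply by increasing values at finitely many nodes. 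In the final model, $\mathcal{B}=\mathcal{B}_{\omega_{1}}\cup\{f_{\alpha}\mid\omega_{1}\leq\alpha<\omega_{2}\}$ is well ordered, and it is unbounded and dominating because every real appears at some stage below $\omega_{2}$ and is dominated by a later Hechler real. Hence $\mathfrak{b}=\mathfrak{d}=\omega_{2}$ and $\mathcal{B}$ is a $\mathfrak{b}$-scale of order type $\omega_{2}$.

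The key point is then that $\tau_{\mathcal{B}}$ is finer than $\tau_{\mathcal{B}_{\omega_{1}}}$, since the subbase grows. Passing to a finer topology only shrinks the ideal of sets not accumulating at $\emptyset$ along sets with fewer closure points, so this direction needs care. First, $A$ must stay dense in $\mathbb{D}(\mathcal{B})$. Here I would check that the new sets $U(f_{\alpha})$ are dense open in a way compatible with $A$. More carefully, $A_{\text{\textsf{gen}}}$ meets every set that is open in the \emph{final} topology, which requires that $\emptyset$ is still in the closure of $A$ and that each basic neighborhood $\bigcap U(f_{\alpha_{i}})_{>m}$ meets $A$. Second, we need that no sequence from $A$ converges to $\emptyset$. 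This is the easy direction: a sequence converging in the finer topology converges in the coarser one, so it suffices that $\mathcal{I}_{\mathbb{D}(\mathcal{B}_{\omega_{1}})}(\emptyset)\upharpoonright A$ remains $\omega$-hitting (hence tall). That holds because the tail iteration of Hechler forcings strongly preserves $\omega$-hitting families by the Brendle--H. theorem, and the ideal is computed from ground-model sets.

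The main obstacle is the density of $A$ in the final topology, i.e.\ $\emptyset\in\overline{A}$ in $\mathbb{D}(\mathcal{B})$, because the later $f_{\alpha}$ dominate the generic. I would handle this by choosing the later scale functions cleverly rather than as raw Hechler reals. By genericity, $A$ contains, inside the cone above any node, a piece of a Laver branch whose successor sets are co-nowhere-dense, so I would try to show directly from the structure of $A_{\text{\textsf{gen}}}$ that for any $f$ whatsoever and any finite set of functions, $\bigcap_i U(f_i)_{>m}\cap A\neq\emptyset$. Since $U(f)$ omits only one successor at each node and $A$ is a tree-like set with infinite successor sets, one can walk down avoiding the finitely many forbidden values. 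If $A_{\text{\textsf{gen}}}$ does not literally have this structure, I would instead replace it by $A^{\uparrow}$-type closure or by the branch set of the generic tree, which has this property by construction. In that case the extra check is that the generic tree remains sealed.
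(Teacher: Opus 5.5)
Your model is the paper's: $\mathbb{L}(\mathcal{D})$ over a \textsf{CH} ground, where $\mathcal{D}$ is a $\mathfrak{b}$-scale, followed by a finite support Hechler iteration of length $\omega_{2}$, with the new scale members taken from the Hechler reals. Your treatment of the half ``no sequence from $A$ converges to $\emptyset$'' (convergence in the finer topology gives convergence in $\mathbb{D}(\mathcal{D})$, plus preservation of $\omega$-hitting) also matches the paper.

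The gap is the other half, $\emptyset\in\overline{A}$ in $\mathbb{D}(\mathcal{B})$. You do not prove it, and the route you sketch cannot work. $A_{\text{\textsf{gen}}}$ is the range of the generic sequence $l_{gen}\in(\omega^{<\omega})^{\omega}$. The Laver tree lives in $(\omega^{<\omega})^{<\omega}$, so $A$ is not a subtree of $\omega^{<\omega}$ with infinite successor sets, and there is nothing to ``walk down'' in.

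In fact the statement you aim for, that $A$ meets $\bigcap_{i}U(f_{i})_{>m}$ for arbitrary $f_{i}$, is false. The set of nodes of length at most $j$ is nowhere dense, so by genericity there is $\ell$ with $|l_{gen}(i)|>i$ for all $i\geq\ell$. Work in $V[A]$ and go through $i\geq\ell$ in order. If $l_{gen}(i)$ is not yet excluded, set $f(t)=l_{gen}(i)(|t|)$ at some proper initial segment $t$ of $l_{gen}(i)$ not yet in the domain of $f$. Such $t$ exists: at most $i-\ell$ nodes have been used, while $l_{gen}(i)$ has $|l_{gen}(i)|>i$ proper initial segments. Taking $k$ above the first coordinates of $l_{gen}(0),\dots,l_{gen}(\ell-1)$, you get $U(f)_{>k}\cap A=\emptyset$.

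The fallback of replacing $A$ by $A^{\uparrow}$ also fails. $A$ meets every cone $\langle(n)\rangle$, so $A^{\uparrow}\supseteq\{(n)\mid n\in\omega\}$, which is a sequence converging to $\emptyset$.

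The missing idea is that domination works for you, not against you. Large functions give large trees: $s\in U(f)$ implies $s^{\frown}\triangle_{f(s)}\subseteq U(f)$ (Lemma~\ref{Lema prop basicas}).

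1. Let $W$ be a finite intersection of sets $U(f_{\alpha})$ with $\alpha<\omega_{1}$. Since $A$ is dense in $\mathbb{D}(\mathcal{D})$, for almost all $n$ there is a least $g_{W}(n)$ with $(n)^{\frown}\triangle_{g_{W}(n)}\cap W\cap A\neq\emptyset$.
2. The function $g_{W}$ lies in $V[A]$, and the first new scale member is a Hechler real over $V[G_{\mathbb{L}}]\supseteq V[A]$. Hence $g_{W}(n)<f_{\alpha}((n))$ for almost all $n$, for every new member $f_{\alpha}$.
3. Given a basic neighborhood $(W\cap U(f_{\alpha_{1}})\cap\dots\cap U(f_{\alpha_{m}}))_{>k}$ with $\alpha_{i}\geq\omega_{1}$, choose $n>k$ with $n\neq f_{\alpha_{i}}(\emptyset)$ and $g_{W}(n)<f_{\alpha_{i}}((n))$ for all $i$.
4. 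Then every $s\in(n)^{\frown}\triangle_{g_{W}(n)}\cap W\cap A$ lies in that neighborhood.

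So raw Hechler reals already suffice. No clever choice of the later scale functions is needed; all that matters is that the first one dominates $V[A]$.
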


\begin{proof}
We start with a model of \textsf{CH. }Let $\mathcal{D}=\left\{  f_{\alpha}%
\mid\alpha\in\omega_{1}\right\}  $ be a $\mathfrak{b}$-scale. Define
$\mathbb{P=L(\mathcal{D})\ast\dot{D}}_{\omega_{2}},$ where $\mathbb{D}%
_{\omega_{2}}$ denotes the finite-support iteration of Hechler forcing of
length $\omega_{2}$. Let $G\subseteq\mathbb{P}$ be a generic filter. We will
prove that in $V\left[  G\right]  $ there exists a non-Fr\'{e}chet
$\mathfrak{b}$-scale.

We go to $V\left[  G\right]  .$ Let $A=A_{\text{\textsf{gen}}}\setminus
\left\{  \emptyset\right\}  ,\ $where $A_{\text{\textsf{gen}}}$ is the range
of the generic real of $\mathbb{L}\left(  \mathcal{D}\right)  .$ By Lemma
\ref{Lema Laver escala} and the strong preservation of $\omega$-hitting of
Hechler forcing, we know that in $\mathbb{D(\mathcal{D})}$ the set $A$ is dense
and does not contain converging sequences to $\emptyset.$ Of course,
$\mathcal{D}$ is no longer a $\mathfrak{b}$-scale (it is bounded). Let
$\mathcal{B=}\left\{  f_{\alpha}\mid\alpha\in\omega_{2}\right\}  $ be a
$\mathfrak{b}$-scale extending $\mathcal{D}$ such that $f_{\omega_{1}}$ is
dominating over $V\left[  A\right]  .$ We will prove that $\mathcal{B}$ is not Fr\'{e}chet.

\begin{claim}

\begin{enumerate}
\item $\mathbb{D(\mathcal{B})\models}$ $\emptyset\in\overline{A}.$

\item $\mathbb{D}\left(  \mathcal{B}\right)  \models A$ does not contain a
convergent sequence to $\emptyset.$
\end{enumerate}
\end{claim}

The second point is easy. Since $A$ does not contain a convergent sequence to
$\emptyset$ in $\mathbb{D(\mathcal{D})},$ then it cannot contain one in
$\mathbb{D(\mathcal{B})}$ (every open set in $\mathbb{D}\left(  \mathcal{D}%
\right)  $ is open in $\mathbb{D(\mathcal{B})}$)$.$ We now prove the first
point. The argument is similar to the proof of Theorem \ref{Teo p=b}.

For convenience, denote $U_{\alpha}=U(f_{\alpha})$ for $\alpha<\omega_{2}.$
Let $\mathcal{U}$ be the set of all $\bigcap\limits_{\alpha\in F}U_{\alpha}$
for $F\in\left[  \omega_{1}\right]  ^{<\omega}.$ Evidently, every element of
$\mathcal{U}$ is an open set in $\mathbb{D}\left(  \mathcal{D}\right)  .$

\textbf{Subclaim. }Let $W\in\mathcal{U}.$ The set $\left\langle \left(
n\right)  \right\rangle \cap W\cap A$ is not empty for almost all $n\in
\omega.$

Indeed, if $n$ is such that $\left\langle \left(  n\right)  \right\rangle \cap
W\neq\emptyset$ (which are almost all $n\in\omega$), since $\mathbb{D}\left(
\mathcal{D}\right)  \models A$ is dense, it follows that $\left\langle \left(
n\right)  \right\rangle \cap W\cap A\neq\emptyset.$ This finishes the proof of
the subclaim.

Given $W\in\mathcal{U},$ we can define $g_{W}\in\omega^{\omega}$ such that for
every $n\in\omega,$ the following holds:

\begin{enumerate}
\item $g_{W}\left(  n\right)  =0$ if $\left\langle \left(  n\right)
\right\rangle \cap W\cap A=\emptyset.$

\item If $\left\langle \left(  n\right)  \right\rangle \cap W\cap
A\neq\emptyset,$ let $g_{W}\left(  n\right)  $ be the least natural number
such that $\left(  n\right)  ^{\frown}\triangle_{g_{W}\left(  n\right)  }\cap
W\cap A\neq\emptyset.$
\end{enumerate}

Note that if $W\in\mathcal{U},$ then $g_{W}\in V\left[  A\right]  .$ It
follows that $g_{W}\left(  n\right)  <f_{\omega_{1}}\left(  \left(  n\right)
\right)  $ for almost all $n\in\omega.$ We are in position to prove that
$\mathbb{D}\left(  \mathcal{B}\right)  \models\emptyset\in\overline{A}$ now. It is
enough to prove that $A$ intersects every set of the form $\left(  W\cap
U_{\alpha_{1}}\cap...\cap U_{\alpha_{m}}\right)  _{>k}$ where $W\in
\mathcal{U},$ $\omega_{1}\leq\alpha_{1},...,\alpha_{m}<\omega_{2}$ and
$k\in\omega.$ Find $n\in\omega$ with the following properties:

\begin{enumerate}
\item $g_{W}\left(  n\right)  \neq0.$

\item $g_{W}\left(  n\right)  <f_{\omega_{1}}\left(  \left(  n\right)
\right)  \leq f_{\alpha_{1}}\left(  \left(  n\right)  \right)  ,...,f_{\alpha
_{m}}\left(  \left(  n\right)  \right)  $

\item $n\neq f_{\alpha_{1}}\left(  \emptyset\right)  ,...,f_{\alpha_{m}%
}\left(  \emptyset\right)  .$

\item $k<n.$
\end{enumerate}

Since $g_{W}\left(  n\right)  \neq0,$ there is $s\in\left(  n\right)
^{\frown}\triangle_{g_{W}\left(  n\right)  }\cap W\cap A.$ It follows by Lemma
\ref{Lema prop basicas} that $s\in U_{\alpha_{i}}$ for all $i\leq m.$ Since
$s\left(  0\right)  =n>k,$ we have that $s$ is in $A\cap\left(  W\cap
U_{\alpha_{1}}\cap...\cap U_{\alpha_{m}}\right)  _{>k}.$
\end{proof}

Let us review the proof of the previous theorem. We started with a
$\mathfrak{b}$-scale $\mathcal{D}$ and forced with $\mathbb{L(\mathcal{D})}.$
The proof actually shows that if we complete $\mathcal{D}$ to a scale
$\mathcal{B}$ in any $\omega$-hitting preserving extension such that the first
element of $\mathcal{B}\setminus\mathcal{D}$ is dominating over
$V[A_{\text{\textsf{gen}}}],$ then $\mathcal{B}$ will not be Fr\'{e}chet. Of
course, this does not need to be the case for every $\mathfrak{b}$-scale, so a
more careful approach is required in order to construct a model in which no
$\mathfrak{b}$-scale is Fr\'{e}chet.

We do not know whether there are Fr\'{e}chet scales in the model of Theorem
\ref{Una no Frechet}. In fact, we conjecture that the scale induced by the
Hechler reals is Fr\'{e}chet, but we do not know how to prove it.

\section{There may be no Fr\'{e}chet $\mathfrak{b}$-scale \label{Sec todas no}%
}

In this section, we will prove that it is consistent that no $\mathfrak{b}%
$-scale is Fr\'{e}chet. We follow the approach used in the model
constructed in \cite{MalykhinProblem}. We will perform a finite support
iteration of forcings of the type $\mathbb{L(\mathcal{D})}$ for $\mathcal{D}$
a weak $\mathfrak{b}$-scale. As in \cite{MalykhinProblem}, we will need a
diamond sequence in order to guess an initial segment of a $\mathfrak{b}%
$-scale \textquotedblleft at the right time\textquotedblright. By Lemma
\ref{Lema Laver escala}, its Laver forcing will add $A_{\text{\textsf{gen}}},$
a dense set containing no sequences that converge to $\emptyset.$ The main
difficulty lies in proving that $A_{\text{\textsf{gen}}}$ still accumulate to
$\emptyset,$ even after extending the $\mathfrak{b}$-scale. This is where our
approach diverges from the one taken in \cite{MalykhinProblem}, where the algebraic
structure is used, which is not available in our setting. A completely
different argument is required.

\begin{definition}
Let $\mathcal{F}$ be a filter on the countable set $X$, $\mathbb{\dot{P}}$ an
$\mathbb{L}\left(  \mathcal{F}\right)  $-name for a partial order, and
$\langle\dot{F}_{n}\mid n\in\omega\rangle$ a sequence of $\mathbb{L}\left(
\mathcal{F}\right)  $-names of filters of $\mathbb{P}$ such that
$\mathbb{L}\left(  \mathcal{F}\right)  \Vdash$\textquotedblleft$\mathbb{P=}%
\bigcup\limits_{n\in\omega}\dot{F}_{n}$\textquotedblright. Let $(p,\dot{u}%
)\in\mathbb{L}\left(  \mathcal{F}\right)  \ast\mathbb{\dot{P}}.$

\begin{enumerate}
\item We say that $(p,\dot{u})$ is \emph{suitable }if there is $n\in\omega$
such that $(p,\dot{u})\Vdash$\textquotedblleft$\dot{u}\in\dot{F}_{n}%
$\textquotedblright.

\item A \emph{type }is a pair of the form $\left(  a,m\right)  $ where $a\in
X^{<\omega}$ and $m\in\omega.$

\item We say that $(p,\dot{u})$ is of type $\left(  a,m\right)  $ if
\textsf{st}$\left(  p\right)  =a$ and $p\Vdash$\textquotedblleft$\dot{u}%
\in\dot{F}_{n}$\textquotedblright.

\item Let $\varphi$ be a formula. We say that $\left(  a,m\right)  $
\emph{prefers }\textquotedblleft$\varphi$\textquotedblright\ if there is no
$(q,\dot{v})$ of type $\left(  a,m\right)  $ that forces the negation of
$\varphi.$
\end{enumerate}
\end{definition}

it follows that a condition is suitable if it has a type. Note that the set of
suitable conditions is dense. The following lemma is easy to verify.

\begin{lemma}
Let $\mathcal{F}$ be a filter on a countable set, $\mathbb{\dot{P}}$ an
$\mathbb{L}\left(  \mathcal{F}\right)  $-name for a $\sigma$-filtered
forcing$,\left(  a,m\right)  $ a type and $\varphi$ a formula.
\label{Lema prop preferir}

\begin{enumerate}
\item If $(p_{1},\dot{u}_{1}),...,(p_{n},\dot{u}_{n})$ are of type $\left(
a,m\right)  ,$ then there is $(q,\dot{v})$ of type $\left(  a,m\right)  $ such
that $(q,\dot{v})\leq(p_{1},\dot{u}_{1}),...,(p_{n},\dot{u}_{n}).$

\item If $\left(  a,m\right)  $ prefers\emph{ }\textquotedblleft$\varphi
$\textquotedblright\ and $(p,\dot{u})$ is of type $\left(  a,m\right)  ,$ then
there is $(q,\dot{v})\leq(p,\dot{u})$ such that $(q,\dot{v})\Vdash
$\textquotedblleft$\varphi$\textquotedblright.

\item If $(p,\dot{u})$ is of type $\left(  a,m\right)  $ and $\left(
a,m\right)  $ does not prefer \textquotedblleft$\varphi$\textquotedblright,
then there is $(q,\dot{v})\leq(p,\dot{u})$ of type $\left(  a,m\right)  $ that
forces the negation of $\varphi.$
\end{enumerate}
\end{lemma}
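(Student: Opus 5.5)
The plan is to use one structural fact throughout: any finitely many conditions of $\mathbb{L}(\mathcal{F})$ with the same stem are compatible, and their intersection is a condition with that same stem. Combined with the fact that each $\dot{F}_{m}$ is forced to be a filter, this makes the type $(a,m)$ behave like a centered piece of the two-step iteration. Here I read ``$(p,\dot{u})$ is of type $(a,m)$'' as \textsf{st}$(p)=a$ and $p\Vdash$``$\dot{u}\in\dot{F}_{m}$''.

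For point (1), let $(p_{1},\dot{u}_{1}),\dots,(p_{n},\dot{u}_{n})$ be of type $(a,m)$ and set $q=p_{1}\cap\dots\cap p_{n}$.
\begin{enumerate}
\item Every node of $q$ is comparable with $a$, since every node of each $p_{i}$ is. For $t\in q$ extending $a$ we have $\textsf{suc}_{q}(t)=\bigcap_{i}\textsf{suc}_{p_{i}}(t)\in\mathcal{F}$.
\item Hence $q\in\mathbb{L}(\mathcal{F})$, $q\leq p_{i}$ for each $i$, and \textsf{st}$(q)=a$ (the successor set of $a$ is in $\mathcal{F}$, so it is infinite and $a$ is maximal).
\item Since $q$ extends each $p_{i}$, $q$ forces that $\dot{u}_{1},\dots,\dot{u}_{n}\in\dot{F}_{m}$.
\item As $\dot{F}_{m}$ is forced to be a filter of $\dot{\mathbb{P}}$, $q$ forces that some element of $\dot{F}_{m}$ lies below all the $\dot{u}_{i}$.
\item By the maximum principle, fix an $\mathbb{L}(\mathcal{F})$-name $\dot{v}$ with $q\Vdash$``$\dot{v}\in\dot{F}_{m}\wedge\dot{v}\leq\dot{u}_{i}$ for all $i$''.
\end{enumerate}
Then $(q,\dot{v})\leq(p_{i},\dot{u}_{i})$ for all $i$, and $(q,\dot{v})$ is of type $(a,m)$.

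For point (2), argue by contradiction. Suppose no $(q,\dot{v})\leq(p,\dot{u})$ forces $\varphi$. Then $(p,\dot{u})$ forces $\neg\varphi$, by the standard fact that a condition forces $\neg\varphi$ exactly when no extension forces $\varphi$. But $(p,\dot{u})$ is of type $(a,m)$, so this contradicts the assumption that $(a,m)$ prefers ``$\varphi$''.

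For point (3), the failure of preference gives some $(q',\dot{v}')$ of type $(a,m)$ forcing $\neg\varphi$. Apply point (1) to $(p,\dot{u})$ and $(q',\dot{v}')$ to get $(q,\dot{v})$ of type $(a,m)$ extending both. Since $(q,\dot{v})$ extends $(q',\dot{v}')$, it forces $\neg\varphi$, and it also extends $(p,\dot{u})$, as required.

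The only step with any content is point (1). The points to check there are that intersecting Laver trees with a common stem keeps the stem, and that the lower bound in $\dot{F}_{m}$ can be chosen as a single name; the latter is where the filter hypothesis, rather than mere centeredness, is used. Points (2) and (3) are formal consequences of point (1).
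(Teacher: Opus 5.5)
Your argument is correct and is the routine verification the paper has in mind; the paper gives no proof and simply calls the lemma easy to verify. You intersect the trees with common stem $a$ and use that $\dot{F}_{m}$ is forced to be a filter for point (1), unwind the definition of forcing for point (2) (which, incidentally, does not actually use point (1)), and combine point (1) with a witness to non-preference for point (3). The only tacit hypothesis, hidden in your phrase ``so it is infinite'', is that $\mathcal{F}$ is free (really, that it contains no singleton): this guarantees that \textsf{suc}$_{q}(a)$ has at least two elements, so the stem of $p_{1}\cap\dots\cap p_{n}$ is still $a$. It holds in the paper's application to $\mathsf{nwd}(\mathbb{D}(\mathcal{B}))^{\ast}$.
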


Note that in the second point above, $(q,\dot{v})$ may not be of type $\left(
a,m\right)  $. Another very important property is the following:

\begin{lemma}
[Pure Preference Property]Let $\mathcal{F}$ be a filter on a countable set,
$\mathbb{\dot{P}}$ an $\mathbb{L}\left(  \mathcal{F}\right)  $-name for a
$\sigma$-filtered forcing, $X$ a finite set and $\dot{y}$ an $\mathbb{L}%
\left(  \mathcal{F}\right)  \ast\mathbb{\dot{P}}$-name for an element of $X.$
For every type $\left(  a,m\right)  ,$ there is $z\in X$ such that $\left(
a,m\right)  $ prefers \textquotedblleft$\dot{y}=z$\textquotedblright.
\end{lemma}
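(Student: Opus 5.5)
The plan is to argue by contradiction, using the amalgamation property of conditions of a fixed type, namely point (1) of Lemma \ref{Lema prop preferir}. Fix a type $\left(a,m\right)$ and assume that for every $z\in X$ the type $\left(a,m\right)$ does \emph{not} prefer \textquotedblleft$\dot{y}=z$\textquotedblright. Unwinding the definition of preference, this means that for each $z\in X$ there is a condition $(p_{z},\dot{u}_{z})$ of type $\left(a,m\right)$ that forces $\dot{y}\neq z$.

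First, the degenerate cases. If no condition of type $\left(a,m\right)$ exists, then every formula is vacuously preferred. In that case any $z\in X$ works, and $X\neq\emptyset$ because $\dot{y}$ is forced to be an element of $X$. So we may assume conditions of type $\left(a,m\right)$ exist, and the contradiction hypothesis gives a family indexed by $X$.

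Since $X$ is finite, the family $\{(p_{z},\dot{u}_{z})\mid z\in X\}$ is finite. By point (1) of Lemma \ref{Lema prop preferir} there is $(q,\dot{v})$ of type $\left(a,m\right)$ extending all of them. If one wants to recheck that point, it goes as follows. Set $q=\bigcap_{z}p_{z}$; this is a Laver tree with stem $a$, since all the $p_{z}$ share the stem $a$ and $\mathcal{F}$ is closed under finite intersections. Then $q$ forces that all the $\dot{u}_{z}$ lie in the filter $\dot{F}_{m}$, so by the maximal principle there is a name $\dot{v}$ for a common lower bound in $\dot{F}_{m}$.

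Now $(q,\dot{v})$ forces $\dot{y}\neq z$ for every $z\in X$, that is, $\dot{y}\notin X$. This contradicts the assumption that $\dot{y}$ is a name for an element of $X$. Hence some $z\in X$ is preferred. The only step with real content is the amalgamation in point (1), which is already available, so there is no serious obstacle; the main thing to verify is that finiteness of $X$ is exactly what makes point (1) applicable.
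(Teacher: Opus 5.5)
Your proof is correct and follows the paper's argument: assuming no $z\in X$ is preferred, you pick for each $z$ a condition of type $(a,m)$ forcing $\dot{y}\neq z$, amalgamate these finitely many conditions using point (1) of Lemma \ref{Lema prop preferir}, and observe that the common extension forces $\dot{y}\notin X$. Your extra remarks, namely the degenerate case with no conditions of type $(a,m)$ and the explicit recheck of the amalgamation through the maximal principle and the filter $\dot{F}_m$, are harmless details that the paper leaves implicit.
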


\begin{proof}
Assume this is not the case, so for every $z\in X$ there is $(p_{z},\dot
{u}_{z})$ of type $\left(  a,m\right)  $ such that $(p_{z},\dot{u}_{z})\Vdash
$\textquotedblleft$\dot{y}\neq z$\textquotedblright. Find $(q,\dot{v})$ that
extends $(p_{z},\dot{u}_{z})$ for every $z\in X.$ It follows that $(q,\dot
{v})\Vdash$\textquotedblleft$\dot{y}\notin X$\textquotedblright, which is a contradiction.
\end{proof}

We will be working with forcings of the type $\mathbb{L(\mathcal{B})}%
\ast\mathbb{\dot{P}},$ where $\mathcal{B}$ is a weak $\mathfrak{b}$-scale. The
elements of $\mathbb{L(\mathcal{B})}$ are subtrees of $\left(  \omega
^{<\omega}\right)  ^{<\omega}$ that branch into subsets of $\omega^{<\omega}.$
For the convenience of the reader, we adopt the following notational conventions:

\begin{enumerate}
\item Elements of $\omega^{<\omega}$ will be denoted by $s,t$ and $z.$

\item Elements of $\left(  \omega^{<\omega}\right)  ^{<\omega}$ will be
denoted by $a,b$ and $c.$

\item Elements of $\mathbb{L(\mathcal{B})}$ will be denoted by $p,q$ and $r.$

\item Names for elements of $\mathbb{\dot{P}}$ will be denoted by $\dot
{u},\dot{v}$ and $\dot{w}.$
\end{enumerate}

Accordingly, if $p\in\mathbb{L(\mathcal{B})},$ then a typical element of $p$
will be denoted by $a$,$b$ or $c$ and a typical element of \textsf{suc}%
$_{p}\left(  a\right)  $ will be denoted by $s,t$ or $z.$

\begin{proposition}
Let $\mathcal{B}$ a weak $\mathfrak{b}$-scale that is dominating and
Fr\'{e}chet, $\mathbb{\dot{P}}$ an $\mathbb{L}\left(  \mathcal{B}\right)
$-name for a $\sigma$-filtered forcing and $\dot{g}_{1},...,\dot{g}_{m}$ be
$\mathbb{L(\mathcal{B})\ast\dot{P}}$-names for functions from $\omega
^{<\omega}$ to $\omega$ that are dominating. For every $U\in\mathcal{N}%
_{\mathbb{D(\mathcal{B})}}\left(  \emptyset\right)  ,$ we have that:
\label{Prop dificil}

\hfill%
\begin{tabular}
[c]{l}%
$\mathbb{L(\mathcal{B})\ast\dot{P}}\Vdash$\textquotedblleft$U\cap U(\dot
{g}_{1})\cap...\cap U(\dot{g}_{m})\cap\dot{A}_{\text{\textsf{gen}}}%
\setminus\left\{  \emptyset\right\}  \neq\emptyset$\textquotedblright.
\end{tabular}
\qquad\hfill\ \qquad
\end{proposition}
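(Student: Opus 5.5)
Suppose the conclusion fails. Then there are a condition $(p,\dot u)$ and a neighborhood $U$ such that $(p,\dot u)$ forces $U\cap U(\dot g_1)\cap\cdots\cap U(\dot g_m)\cap\dot A_{\textsf{gen}}\setminus\{\emptyset\}=\emptyset$. Since suitable conditions are dense, we may shrink $(p,\dot u)$ and assume it has a type $(a,m_0)$, where $a=\textsf{st}(p)$. Every node $b\supseteq a$ of $p$ has successor set $\textsf{suc}_p(b)\in\textsf{nwd}(\mathbb{D}(\mathcal{B}))^{\ast}$, so it is dense. The plan is to find a node $s\in\textsf{suc}_p(a)\cap U\setminus\{\emptyset\}$ such that the type $(a^\frown s,m_0)$ prefers ``$s\in U(\dot g_1)\cap\cdots\cap U(\dot g_m)$''. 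By Lemma \ref{Lema prop preferir}(2), applied to $(p_{a^\frown s},\dot u)$, we then obtain an extension forcing $s\in\dot A_{\textsf{gen}}\cap U\cap\bigcap_i U(\dot g_i)$. This contradicts the choice of $(p,\dot u)$.

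Finding such an $s$ is the main step. For each node $s$, membership $s\in U(\dot g_i)$ depends only on the finitely many values $\dot g_i(s\upharpoonright j)$ for $j<|s|$, by Lemma \ref{Lema hoyito}. We use the Pure Preference Property, applied to names with finitely many values, to define ground-model functions. The difficulty is that the $\dot g_i$ have unbounded values. We handle this by capping them: for a node $t$ and a bound $k$, let $\dot y$ be the finite-valued name $\min(\dot g_i(t),k)$. The type $(a^\frown t',m_0)$ prefers some value of this name, and we record it. Because each $\dot g_i$ is forced to be dominating, we expect the preferred capped values to be eventually equal to $k$, for each fixed argument and along cones. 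This is the key point: we want a ground-model function $h$, for instance $h=f_\beta$ for some $f_\beta\in\mathcal{B}$, such that the types prefer ``$\dot g_i(t)>h(t)$'' for almost all $t$. If this failed for infinitely many $t$, the preferred values would give a ground-model bound on $\dot g_i$ along an infinite set. Condensing these finitely many preferences into a single condition via Lemma \ref{Lema prop preferir}(1) should then contradict the fact that $\dot g_i$ is forced to be dominating. I expect this step to be the hardest: the preference has to be uniform over the types $(a^\frown t,m_0)$ for varying $t$, and a diagonal argument over the countably many types, using that $\mathcal{B}$ is dominating, is needed to produce one $f_\beta$ that works for all of them.

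Once we have $f_\beta$ with preference for ``$\dot g_i>f_\beta$ off $\triangle_N$'' for every $i$, we finish as follows. The set $W=U\cap U(f_\beta)_{>N}$ is a neighborhood of $\emptyset$. Since $\textsf{suc}_p(a)$ is dense, we can pick $s\in\textsf{suc}_p(a)\cap W$ with $s\neq\emptyset$. We also want $s$ to lie in $U(\dot g_i)$ for all $i$, which amounts to avoiding the finitely many values $\dot g_i(s\upharpoonright j)$. Since $s\in U(f_\beta)$, Lemma \ref{Lema prop basicas}(1) lets us take $s$ below the relevant values. Concretely, if $s(j)<f_\beta(s\upharpoonright j)<\dot g_i(s\upharpoonright j)$ is preferred, then $s(j)\neq\dot g_i(s\upharpoonright j)$ is preferred. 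This is where the Fréchet hypothesis enters, through Proposition \ref{Prop Michael Ariet}: it lets us choose $s$ from a sequence in $\textsf{suc}_p(a)$ converging to $\emptyset$ whose coordinates stay in the small region $\triangle_{f_\beta}$, as in the proof of Theorem \ref{Teo p=b}. Finally, preferences for the finitely many conjuncts combine into a single preference by Lemma \ref{Lema prop preferir}(1), which completes the argument.
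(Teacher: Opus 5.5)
There is a genuine gap. It is in your last paragraph, not where you expected it.

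\textbf{The step you flagged as hardest is immediate.} By Lemma \ref{Lema prop preferir}(1), any two conditions of the same type are compatible. So extend to a suitable $(p,\dot u)$ of type $(a,m_0)$ that forces $f_\beta<_N\dot g_i$ for all $i$; this is possible for any ground-model $f_\beta$, since the $\dot g_i$ are dominating. Then each $(p_{a^\frown t},\dot u)$ forces these inequalities, so every type $(a^\frown t,m_0)$ with $t\in\mathsf{suc}_p(a)$ prefers them. No diagonalization is needed, and no use of $\mathcal{B}$ being dominating.

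\textbf{What fails is the choice of $s$.} Membership in $U(f_\beta)$ gives only $s(j)\neq f_\beta(s\upharpoonright j)$. Lemma \ref{Lema prop basicas}(1) describes the extensions $s^\frown\triangle_{f_\beta(s)}$ of a node, not the node's own coordinates. So nothing gives you $s(j)<f_\beta(s\upharpoonright j)$.

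In fact the set $Z=\{s\neq\emptyset\mid s(j)<f_\beta(s\upharpoonright j)\text{ for all }1\leq j<|s|\}$ is nowhere dense in $\mathbb{D}(\mathcal{B})$. Given a nonempty open $V$, pick $t\in V$ with $t\neq\emptyset$. Every neighbourhood of $t$ contains $t^\frown M$ for almost all $M$, so some $t^\frown M\in V$ has $M\geq f_\beta(t)$. Then $V\cap\langle t^\frown M\rangle$ is a nonempty open set missing $Z$.

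Consequently $\mathsf{suc}_p(a)$, which is only co-nowhere-dense, may miss $Z$ entirely. Choosing the bound after $p$ does not repair this: forcing a new bound on the $\dot g_i$ requires a further extension, whose successor sets can again avoid the new nowhere dense $Z$.

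The Fr\'{e}chet property does not help either. A sequence in $\mathsf{suc}_p(a)$ converging to $\emptyset$ only eventually avoids the values of members of $\mathcal{B}$. It need not be small, and it says nothing about the $\dot g_i$. Proposition \ref{Prop Michael Ariet} is about preserving $\omega$-hitting families and does not supply such a sequence. Tellingly, your construction of $s$ never uses the hypothesis that $(p,\dot u)$ forces the intersection to be empty.

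\textbf{The missing idea} is to use that hypothesis to make the set of candidate nodes $\mathsf{nwd}$-positive, so that no later shrinking of the tree can avoid it, while demanding smallness at only one coordinate per node. The paper proceeds as follows.
\begin{enumerate}
\item Take a type $(a,n)$ realized below a suitable condition, and $z\in\mathsf{suc}_{p(a,n)}(a)\cap U_{>l(a,n)}$. Emptiness plus the Pure Preference Property give a coordinate $i(z,a,n)$ and some $j$ such that $(a^\frown z,n)$ prefers $\dot g_j(z\upharpoonright i(z,a,n))=z(i(z,a,n))$.
\item A preliminary extension ensures that no realized type prefers an upper bound on $\dot g=\min_i\dot g_i$ at nodes outside $\triangle_{l(a,n)}$ (the ``no ugly types'' claim). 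This makes each fibre $\mathit{Ext}_s(a,n)$ nowhere dense.
\item Weak selectivity of $\mathsf{nwd}(\mathbb{D}(\mathcal{B}))$ (Proposition \ref{nwd no arriba ED}) yields a selector $h_{(a,n)}$ with $\mathsf{nwd}$-positive image. This is where the Fr\'{e}chet hypothesis is really used.
\item A single $f\in\mathcal{B}$ dominates all the hole values $\overline{h}_{(a,n)}$. This is where ``$\mathcal{B}$ dominating'' is really used.
\item Take a condition forcing $f<_k\dot g$. Its successor set meets the image of $h_{(a,n)}$ at some $z=h_{(a,n)}(s)$ where $\dot g(s)\leq\overline{h}_{(a,n)}(s)<f(s)$ is preferred, which is a contradiction.
\end{enumerate}
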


\begin{proof}
We proceed by contradiction. Assume there is a condition $(\overline
{p},\overline{u})$ forcing that the intersection is empty. For convenience, we
may assume that if $a\in\overline{p}\ $extends the stem, then$\ \emptyset
\notin$ \textsf{suc}$_{\overline{p}}\left(  a\right)  $ and the range of $a$
and \textsf{suc}$_{\overline{p}}\left(  a\right)  $ are disjoint. Let $\dot
{g}$ be the name of the function from $\omega^{<\omega}$ to $\omega$ such that
$\dot{g}\left(  s\right)  =$ \textsf{min}$\{\dot{g}_{1}\left(  s\right)
,...,\dot{g}_{m}\left(  s\right)  \}.$ It is easy to see that $\dot{g}$ is
forced to be dominating over $V.$ We may assume there is $\overline{n}%
\in\omega$ such that:

\begin{enumerate}
\item $(\overline{p},\overline{u})\Vdash$\textquotedblleft$\dot{g}_{1}\left(
\emptyset\right)  ,...,\dot{g}_{m}\left(  \emptyset\right)  <\overline{n}%
$\textquotedblright.

\item If $k\geq\overline{n},$ then $\left(  k\right)  \in U.$
\end{enumerate}

Let $a\in\overline{p}$ and $n\in\omega.$ We define the following:

\begin{enumerate}
\item $l\left(  a,n\right)  =$ \textsf{min}$\{k>\overline{n}\mid
a\subseteq\triangle_{k}\}+n.$

\item $M\left(  a,n\right)  $ is the set of all $s\in\omega^{<\omega}$ for
which there is $k\in\omega$ such that $\left(  a,n\right)  $ prefers
\textquotedblleft$\dot{g}\left(  s\right)  <k$\textquotedblright.

\item We say $\left(  a,n\right)  $ \emph{is ugly }if $M\left(  a,n\right)
\setminus\triangle_{l\left(  a,n\right)  }.$

\item Let $(p,\dot{u})\leq(\overline{p},\overline{u}).$ We say that $\left(
a,n\right)  $ \emph{can be realized below} $(p,\dot{u})$ if there is
$(q,\dot{v})\leq(p,\dot{u})$ of type $\left(  a,n\right)  .$
\end{enumerate}

Intuitively, $M\left(  a,n\right)  $ is the collection of all $s\in
\omega^{<\omega}$ such that $\left(  a,n\right)  $ can bound $\dot{g}\left(
s\right)  $ (in term of preference) and a type is ugly if it can bound
something that is \textquotedblleft very far away\textquotedblright. We now
have the following:

\begin{claim}
There is $(p,\dot{u})\leq(\overline{p},\overline{u})$ such that no ugly type
is realized below $(p,\dot{u}).$
\end{claim}

For every $s\in\omega^{<\omega},$ define $X\left(  s\right)  $ as the set of
all types $\left(  a,n\right)  $ such that $s\in M\left(  a,n\right)
\setminus\triangle_{l\left(  a,n\right)  }.$ In other words, $X\left(
s\right)  $ is the set of all types $\left(  a,n\right)  $ such that $s$
testifies that $\left(  a,n\right)  $ is ugly. Note that $X\left(  s\right)  $
is a finite set. This is simply because $s$ is in $\triangle_{l\left(
a,n\right)  }$ for almost all types $\left(  a,n\right)  .$ We can then define
$h:\omega^{<\omega}\longrightarrow\omega$ such that if $s\in\omega^{<\omega}$
and $\left(  a,n\right)  \in X\left(  s\right)  ,$ then $\left(  a,n\right)  $
prefers \textquotedblleft$\dot{g}\left(  s\right)  <h\left(  s\right)
$\textquotedblright\ (in case $X\left(  s\right)  =\emptyset,$ we can simply
take $h\left(  s\right)  =0$).

Since $\dot{g}$ is forced to be a dominating real, there are $k\in\omega$ and
$(p,\dot{u})\leq(\overline{p},\overline{u})$ such that $(p,\dot{u})\Vdash
$\textquotedblleft$h\leq_{k}\dot{g}$\textquotedblright\ and \textsf{st}%
$\left(  p\right)  \nsubseteq\triangle_{k}$. We claim that $(p,\dot{u})$ is as
desired. If this was not true, then there is an ugly type $\left(  a,n\right)
$ that can be realized below $(p,\dot{u}).$ Let $(q,\dot{v})\leq(p,\dot{u})$
that is of type $\left(  a,n\right)  .$ Since \textsf{st}$\left(  q\right)
\nsubseteq\triangle_{k}$ (because \textsf{st}$\left(  p\right)  \subseteq$
\textsf{st}$\left(  q\right)  $), it follows that $l\left(  a,n\right)  >k.$
Since $\left(  a,n\right)  $ is an ugly type, we know that there is $s\in
M\left(  a,n\right)  \setminus\triangle_{l\left(  a,n\right)  }$. Since
$\left(  a,n\right)  \in X\left(  s\right)  ,$ it follows that $\left(
a,n\right)  $ prefers \textquotedblleft$\dot{g}\left(  s\right)  <h\left(
s\right)  $\textquotedblright. In this way, we can find $(r,\dot{w}%
)\leq(q,\dot{v})$ such that $(r,\dot{w})\Vdash$\textquotedblleft$\dot
{g}\left(  s\right)  <h\left(  s\right)  $\textquotedblright. But this is a
contradiction since $s\notin\triangle_{k}$ (recall that $l\left(  a,n\right)
>k$ and $s$ is not even in $\triangle_{l\left(  a,n\right)  }$) and
$(p,\dot{u})\Vdash$\textquotedblleft$h\leq_{k}\dot{g}$\textquotedblright. This
finishes the proof of the claim.

Fix $(p,\dot{u})\leq(\overline{p},\overline{u})$ such that no ugly type is
realized below it. For simplicity, we will assume that \textsf{st}$\left(
p\right)  =\emptyset.$ The argument for the general case is essentially the
same, only requiring much more notation. Let $T$ be the set of all types that
can be realized below $(p,\dot{u}).$ For every type $\left(  a,n\right)  \in
T,$ fix a condition $(p\left(  a,n\right)  ,\dot{u}\left(  a,n\right)
)\leq(p,\dot{u})$ of type $\left(  a,n\right)  $. Note that the stem of
$p\left(  a,n\right)  $ is $a.$

We record some properties of $T.$ For every $\left(  a,n\right)  \in T,$ the
following holds:

\begin{enumerate}
\item $a\in p.$

\item If $b\in p_{\left(  a,n\right)  }$ and $a\subseteq b,$ then $\left(
b,n\right)  \in T.$ In particular, if $z\in$ \textsf{suc}$_{p\left(
a,n\right)  }\left(  a\right)  ,$ then $\left(  a^{\frown}z,n\right)  \in T.$

\item If $q\leq p\left(  a,n\right)  ,$ then $($\textsf{st}$\left(  q\right)
,n)\in T.$
\end{enumerate}

We now have the following:

\begin{claim}
For every $\left(  a,n\right)  \in T$ and $z\in$ \textsf{suc}$_{p\left(
a,n\right)  }\left(  a\right)  \cap U_{>l\left(  a,n\right)  }$ there is
$i\left(  z,a,n\right)  $ such that:

\begin{enumerate}
\item $i\left(  z,a,n\right)  <\left\vert z\right\vert .$

\item There is $j\leq m$ such that $\left(  a^{\frown}z,n\right)  $
prefers \newline\textquotedblleft$\dot{g}_{j}(z\upharpoonright i\left(
z,a,n\right)  )=z\left(  i\left(  z,a,n\right)  \right)  $\textquotedblright.
In particular, $\left(  a^{\frown}z,n\right)  $ prefers \newline%
\textquotedblleft$\dot{g}(z\upharpoonright i\left(  z,a,n\right)  )\leq
z\left(  i\left(  z,a,n\right)  \right)  $\textquotedblright.
\end{enumerate}
\end{claim}

Let $q=p\left(  a,n\right)  _{a^{\frown}\left(  z\right)  }$ (in other words,
$q$ is the tree obtained by adding $z$ to the stem of $p\left(  a,n\right)
$). Note that $q\Vdash$\textquotedblleft$z\in\dot{A}_{\text{\textsf{gen}}}\cap
U$\textquotedblright. Since $U\cap U(\dot{g}_{1})\cap...\cap U(\dot{g}%
_{m})\cap\dot{A}_{\text{\textsf{gen}}}\setminus\left\{  \emptyset\right\}  $
is forced to be empty, Lemma \ref{Lema hoyito} implies that:\medskip

\hfill%
\begin{tabular}
[c]{l}%
$(q,\dot{u}\left(  a,n\right)  )\Vdash$\textquotedblleft$\exists j,k(\dot
{g}_{j}(z\upharpoonright k)=z(k))$\textquotedblright.
\end{tabular}
\hfill\medskip\ 

We can now use the Pure Preference Property to find the exact $j$ and $k$.
This finishes the proof of the claim.

Let $\left(  a,n\right)  \in T$. Define the following items:

\begin{enumerate}
\item $W\left(  a,n\right)  =\{z\upharpoonright i\left(  z,a,n\right)  \mid
z\in$ \textsf{suc}$_{p\left(  a,n\right)  }\left(  a\right)  \cap U_{>l\left(
a,n\right)  }\}.$

\item For every $s\in W\left(  a,n\right)  ,$ denote \newline$Ext_{s}\left(
a,n\right)  =\{z\in$ \textsf{suc}$_{p\left(  a,n\right)  }\left(  a\right)
\cap U_{>l\left(  a,n\right)  }\mid s=z\upharpoonright i\left(  z,a,n\right)
\}.$
\end{enumerate}

We will now prove the following claim:

\begin{claim}
Let $\left(  a,n\right)  \in T$ and $s\in W\left(  a,n\right)  .$ The
following holds:

\begin{enumerate}
\item $s\neq\emptyset$ and $s\left(  0\right)  >l\left(  a,n\right)  .$

\item $W\left(  a,n\right)  $ is infinite.

\item $\mathbb{D}\left(  \mathcal{B}\right)  \models Ext_{s}\left(
a,n\right)  $ is nowhere dense.
\end{enumerate}
\end{claim}

We prove the first point. Pick any $z\in Ext_{s}\left(  a,n\right)  .$ It
follows by definition that $z\left(  0\right)  >l\left(  a,n\right)  .$ In
order to prove that $s\neq\emptyset$ and $s\left(  0\right)  >l\left(
a,n\right)  ,$ it is enough to show that $i\left(  z,a,n\right)  \neq0.$ If it
was the case that $i\left(  z,a,n\right)  =0,$ then we would have that
$\left(  a^{\frown}z,n\right)  $ prefers \textquotedblleft$\dot{g}%
_{j}(\emptyset)=z\left(  0\right)  $\textquotedblright\ for some $j\leq m.$ It
follows that $\left(  a^{\frown}z,n\right)  $ prefers \textquotedblleft%
$\dot{g}_{j}(\emptyset)>\overline{n}$\textquotedblright, which is impossible
(see the properties of $\overline{n}$ at the beginning of the proof).

We now prove the second point. Choose any $k>l\left(  a,n\right)  .$ Since
\textsf{suc}$_{p}\left(  a\right)  $ is dense, we can find $z\in$
\textsf{suc}$_{p}\left(  a\right)  \cap U_{>l\left(  a,n\right)  }$ with
$z\left(  0\right)  =k.$ The conclusion follows by the first point of the claim.

It is time to prove the third point. Let $W\subseteq\omega^{<\omega}$ be a
non-empty open set. We need to find a non-empty open set $U\subseteq W$ that
is disjoint with $Ext_{s}\left(  a,n\right)  .$ If $W\cap\left\langle
s\right\rangle ^{\text{\textsf{c}}}\neq\emptyset,$ then this is a non-empty
set disjoint with $Ext_{s}\left(  a,n\right)  $, since this set is contained
in $\left\langle s\right\rangle .$ Moreover, we may assume there is
$t\in\omega^{<\omega}$ such that $s\subsetneq t$ and $W\subseteq\left\langle
t\right\rangle $ (since $W\subseteq\left\langle s\right\rangle ,$ there is
$t\supsetneq s$ such that $t\in W,$ we then change $W$ for $W\cap\left\langle
t\right\rangle $).

By the first point of the claim, we know that $s\notin\triangle_{l\left(
a,n\right)  }.$ Since $\left(  a,n\right)  $ is not ugly, it follows that
$\left(  a,n\right)  $ does not prefer \textquotedblleft$\dot{g}\left(
s\right)  \leq t\left(  \left\vert s\right\vert \right)  $\textquotedblright.
By Lemma \ref{Lema prop preferir}, we can find $(q,\dot{v})\leq(p\left(
a,n\right)  ,\dot{u}\left(  a,n\right)  )$ of type $\left(  a,n\right)  $ such
that $(q,\dot{v})\Vdash$\textquotedblleft$\dot{g}\left(  s\right)  >t\left(
\left\vert s\right\vert \right)  $\textquotedblright. We may assume that
\textsf{suc}$_{q}\left(  a\right)  $ is an open dense subset of
$\mathbb{D(\mathcal{B})}.$ We claim that $U=W\cap$ \textsf{suc}$_{q}\left(
a\right)  $ (which is non-empty) is disjoint with $Ext_{s}\left(  a,n\right)
.$ Assume there is $z\in U\cap Ext_{s}\left(  a,n\right)  .$ We have the following:

\begin{enumerate}
\item $s\subsetneq t\subseteq z$ (recall that $W\subseteq\left\langle
t\right\rangle $).

\item $z\upharpoonright i\left(  z,a,n\right)  =s$ ($z\in Ext_{s}\left(
a,n\right)  $). In this way, there is $j\leq m$ such that $\left(  a^{\frown
}z,n\right)  $ prefers \textquotedblleft$\dot{g}_{j}\left(  s\right)
=z\left(  \left\vert s\right\vert \right)  =t\left(  \left\vert s\right\vert
\right)  $\textquotedblright. In particular, $\left(  a^{\frown}z,n\right)  $
prefers \textquotedblleft$\dot{g}(s)\leq t\left(  \left\vert s\right\vert
\right)  $\textquotedblright.
\end{enumerate}

Let $(r,\dot{x})\leq(q,\dot{v})$ such that $(r,\dot{x})\Vdash$%
\textquotedblleft$\dot{g}(s)\leq t\left(  \left\vert s\right\vert \right)
$\textquotedblright. But this is impossible since $(q,\dot{v})\Vdash
$\textquotedblleft$\dot{g}\left(  s\right)  >t\left(  \left\vert s\right\vert
\right)  $\textquotedblright. This finishes the proof of the claim.

We need another claim:

\begin{claim}
Let $\left(  a,n\right)  \in T.$ There is $h_{\left(  a,n\right)  }:W\left(
a,n\right)  \longrightarrow\omega^{<\omega}$ with the following properties:

\begin{enumerate}
\item $h_{\left(  a,n\right)  }\left(  s\right)  \in Ext_{s}\left(
a,n\right)  $ (so $s\subsetneq h_{\left(  a,n\right)  }\left(  s\right)  $).

\item \textsf{im}$\left(  h_{\left(  a,n\right)  }\right)  \in$ \textsf{nwd(}%
$\mathbb{D(\mathcal{B})}$\textsf{)}$^{+}.$
\end{enumerate}
\end{claim}

Take an enumeration $W_{\left(  a,n\right)  }=\left\{  s_{i}\mid i\in
\omega\right\}  $ (recall that this set is infinite). Define $N_{0}%
=Ext_{s_{0}}\left(  a,n\right)  $ and $N_{i+1}=Ext_{s_{i}}\left(  a,n\right)
\setminus N_{0}\cup...\cup N_{i}.$ It follows that $\left\{  N_{i}\mid
i\in\omega\right\}  $ is a partition of \textsf{suc}$_{p\left(  a,n\right)
}\left(  a\right)  \cap U_{>l\left(  a,n\right)  }$ into nowhere dense sets.
Since $\mathbb{D(\mathcal{B})}$ is Fr\'{e}chet, Proposition
\ref{nwd no arriba ED} implies that there is $Z\subseteq$ \textsf{suc}%
$_{p\left(  a,n\right)  }\left(  a\right)  \cap U_{>l\left(  a,n\right)  }$
that is not nowhere dense such that $\left\vert Z\cap N_{k}\right\vert \leq1$
for every $k\in\omega$ (we could make sure that $\left\vert Z\cap
N_{k}\right\vert =1$ whenever $N_{k}\neq\emptyset$ if we wanted). We can now
define $h_{\left(  a,n\right)  }:W\left(  a,n\right)  \longrightarrow
\omega^{<\omega}$ as follows: For $s_{i}\in W\left(  a,n\right)  ,$ if $Z\cap
N_{i}\neq\emptyset,$ let $h_{\left(  a,n\right)  }\left(  s_{i}\right)  $ be
the only point in $Z\cap N_{i}.$ In case $Z\cap N_{i}=\emptyset,$ let
$h_{\left(  a,n\right)  }\left(  s_{i}\right)  $ be any element of
$Ext_{s}\left(  a,n\right)  .$ It follows that \textsf{im}$\left(  h_{\left(
a,n\right)  }\right)  $ contains $Z,$ so it is not nowhere dense. This
finishes the proof of the claim.

Let $\left(  a,n\right)  \in T.$ We now define the function $\overline
{h}_{\left(  a,n\right)  }:W\left(  a,n\right)  \longrightarrow\omega
^{<\omega}$ given by $\overline{h}_{\left(  a,n\right)  }\left(  s\right)
=h_{\left(  a,n\right)  }\left(  s\right)  \left(  \left\vert s\right\vert
\right)  $ (this is possible because $s\subsetneq h_{\left(  a,n\right)
}\left(  s\right)  $). Note that if $s\in W\left(  a,n\right)  ,$ then we have
the following:

\begin{enumerate}
\item $h_{\left(  a,n\right)  }\left(  s\right)  \in$ \textsf{suc}$_{p\left(
a,n\right)  }\left(  a\right)  \cap U_{>l\left(  a,n\right)  }$ and
$s\subsetneq h_{\left(  a,n\right)  }\left(  s\right)  .$

\item $\left(  a^{\frown}h_{\left(  a,n\right)  }\left(  s\right)  ,n\right)
$ prefers \textquotedblleft$\dot{g}(s)\leq\overline{h}_{\left(  a,n\right)
}\left(  s\right)  $\textquotedblright\ \newline(this is because $h_{\left(
a,n\right)  }\left(  s\right)  \upharpoonright\left\vert s\right\vert =s$).
\end{enumerate}

Since $\{\overline{h}_{\left(  a,n\right)  }\mid\left(  a,n\right)  \in T\}$
is a countable set and $\mathcal{B}$ is a dominating family, there is
$f\in\mathcal{B}$ such that $\overline{h}_{\left(  a,n\right)  }\leq^{\ast}f$
for every $\left(  a,n\right)  \in T.$ Recall that $\dot{g}$ is forced to be a
dominating real. We can find a suitable $(q,\dot{v})\leq(p,\dot{u})$ and
$k\in\omega$ such that $(q,\dot{v})\Vdash$\textquotedblleft$f<_{k}\dot{g}%
$\textquotedblright. Let $\left(  a,n\right)  $ be the type of $(q,\dot{v}).$
We may assume that $k<l\left(  a,n\right)  .$

\begin{claim}
There is $s\in W\left(  a,n\right)  $ with the following properties:

\begin{enumerate}
\item $h_{\left(  a,n\right)  }\left(  s\right)  \in$ \textsf{suc}$_{q}\left(
a\right)  .$

\item $(q,\dot{v})\Vdash$\textquotedblleft$f\left(  s\right)  <\dot{g}\left(
s\right)  $\textquotedblright.

\item $\overline{h}_{\left(  a,n\right)  }\left(  s\right)  <f\left(
s\right)  .$
\end{enumerate}
\end{claim}

Since \textsf{im}$\left(  h_{\left(  a,n\right)  }\right)  \in$ \textsf{nwd(}%
$\mathbb{D(\mathcal{B})}$\textsf{)}$^{+},$ it follows that \textsf{im}$\left(
h_{\left(  a,n\right)  }\right)  \cap$ \textsf{suc}$_{q}\left(  a\right)  $ is
infinite. Moreover, we know that $\overline{h}_{\left(  a,n\right)  }%
\leq^{\ast}f,$ so we can find $s\in W\left(  a,n\right)  $ such that
$h_{\left(  a,n\right)  }\left(  s\right)  \in$ \textsf{suc}$_{q}\left(
a\right)  $ and $\overline{h}_{\left(  a,n\right)  }\left(  s\right)
<f\left(  s\right)  .$ Since $s\left(  0\right)  >l\left(  a,n\right)  ,$ we
conclude that $s\notin\triangle_{k}$ (recall that $k<l\left(  a,n\right)  ).$
It follows that $(q,\dot{v})\Vdash$\textquotedblleft$f\left(  s\right)
<\dot{g}\left(  s\right)  $\textquotedblright. This finishes the proof of the claim.

We can now finish the proof. Let $s\in W\left(  a,n\right)  $ as above and
$z=h_{\left(  a,n\right)  }.$ Since $\left(  a^{\frown}z,n\right)  $ prefers
\textquotedblleft$\dot{g}(s)\leq\overline{h}_{\left(  a,n\right)  }\left(
s\right)  $\textquotedblright, there is $(r,\dot{w})<(q_{z},\dot{v})$ such
that $(r,\dot{w})\Vdash$\textquotedblleft$\dot{g}(s)\leq\overline{h}_{\left(
a,n\right)  }\left(  s\right)  $\textquotedblright. It follows that
$(r,\dot{w})\Vdash$\textquotedblleft$\dot{g}(s)<f\left(  s\right)
$\textquotedblright. But this is a contradiction because $(q,\dot{v})\Vdash
$\textquotedblleft$f<_{k}\dot{g}$\textquotedblright.
\end{proof}

Let $S_{\omega_{1}}\left(  \omega_{2}\right)  =\{\alpha<\omega_{2}\mid$
\textsf{cof}$\left(  \alpha\right)  =\omega_{1}\}.$ Recall the following principle:

\begin{center}%
\begin{tabular}
[c]{ll}%
$\Diamond(S_{\omega_{1}}\left(  \omega_{2}\right)  )$ & There is $\left\{
D_{\alpha}\mid\alpha\in S_{\omega_{1}}\left(  \omega_{2}\right)  \right\}  $
such that $D_{\alpha}\subseteq\alpha$ for\\
& all $\alpha\in S_{\omega_{1}}\left(  \omega_{2}\right)  $ with the property
that for every\\
& $X\in\omega_{2},$ the set $\left\{  \alpha\mid X\cap\alpha=D_{\alpha
}\right\}  $ is stationary.
\end{tabular}

\end{center}

A sequence as above is called a $\Diamond(S_{\omega_{1}}\left(  \omega
_{2}\right)  )$-\emph{sequence. }It is well known that $\Diamond(S_{\omega
_{1}}\left(  \omega_{2}\right)  )$ holds in the constructible universe of
G\"{o}del (see \cite{Jech} or \cite{Constructibility}). We can now prove the
main theorem of the section, which was inspired in the proof of the main
theorem of \cite{MalykhinProblem}.

\begin{theorem}
It is consistent that no $\mathfrak{b}$-scale is Fr\'{e}chet.
\end{theorem}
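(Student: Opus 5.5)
We start from a model of \textsf{GCH} together with $\Diamond(S_{\omega_{1}}(\omega_{2}))$, for instance $L$. In it we perform a finite support iteration $\langle\mathbb{P}_{\alpha},\dot{\mathbb{Q}}_{\alpha}\mid\alpha<\omega_{2}\rangle$ of $\sigma$-centered, c.c.c. forcings that strongly preserve $\omega$-hitting families. At stages of countable cofinality (or at all successor stages) we force with Hechler forcing $\mathbb{D}$. This makes $\mathfrak{b}=\mathfrak{d}=\omega_{2}=\mathfrak{c}$ in the final model, and at each stage of cofinality $\omega_{1}$ it adds dominating reals over all earlier intermediate models.

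At a stage $\alpha\in S_{\omega_{1}}(\omega_{2})$ we use the diamond sequence, coded via nice names, to guess a $\mathbb{P}_{\alpha}$-name $\mathcal{D}_{\alpha}$ for a family of functions $\omega^{<\omega}\to\omega$ of order type $\omega_{1}$. We act only when $\mathbb{P}_{\alpha}$ forces that $\mathcal{D}_{\alpha}$ is a weak $\mathfrak{b}$-scale that is dominating and whose Dow space is Fr\'{e}chet; in that case we let $\dot{\mathbb{Q}}_{\alpha}=\mathbb{L}(\mathcal{D}_{\alpha})$, and otherwise $\dot{\mathbb{Q}}_{\alpha}=\mathbb{D}$. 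Several facts then hold. By Lemma~\ref{Lema Laver escala}(1), $\mathbb{L}(\mathcal{D}_{\alpha})$ strongly preserves $\omega$-hitting families and seals $\mathcal{I}_{\mathbb{D}(\mathcal{D}_{\alpha})}(\emptyset)\upharpoonright \dot{A}_{\alpha}$, where $\dot{A}_{\alpha}=\dot{A}_{\text{\textsf{gen}}}\setminus\{\emptyset\}$. Laver forcing of a filter is $\sigma$-centered. By the Brendle--H. iteration theorem, the whole iteration strongly preserves $\omega$-hitting families. By Propositions~\ref{Prop iteracion centered} and \ref{Prop Boolean compl}, every tail $\mathbb{P}_{\omega_{2}}/\mathbb{P}_{\alpha+1}$ is (equivalent to) a $\sigma$-filtered forcing, because the quotient of a finite support iteration of $\sigma$-centered posets of length less than $\mathfrak{c}^{+}$ is $\sigma$-centered.

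In the final model, suppose towards a contradiction that $\mathcal{B}=\{f_{\xi}\mid\xi<\omega_{2}\}$ is a Fr\'{e}chet $\mathfrak{b}$-scale. Since $\omega$-hitting preservation prevents filling towers and $\mathfrak{b}=\omega_{2}$, every $\mathfrak{b}$-scale is dominating; this is the Baumgartner--Dordal argument (Proposition~\ref{BaumgartnerDordal}) applied with the towers of length $\omega_{2}$ that are never filled. We now run a standard reflection argument with a club of $\alpha\in S_{\omega_{1}}(\omega_{2})$, using $\omega_{2}$-c.c. and the stationarity given by diamond. We find $\alpha$ such that the following hold: $\mathcal{B}\cap V[G_{\alpha}]$ is an initial segment $\mathcal{D}=\{f_{\xi}\mid\xi<\beta\}$ with $\operatorname{cof}(\beta)=\omega_{1}$, correctly guessed as $\mathcal{D}_{\alpha}$; $\mathcal{D}$ is dominating in $V[G_{\alpha}]$; and $\mathbb{D}(\mathcal{D})$ is Fr\'{e}chet in $V[G_{\alpha}]$. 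The last point is the delicate part of reflection. Fr\'{e}chetness at a point is a statement about countable sets, witnessed by countable subsets, and $\mathbb{D}(\mathcal{D})$'s topology at a set in $V[G_{\alpha}]$ is determined by countably many members of the scale, so a closure argument over elementary submodels gives it. In particular, $\mathbb{L}(\mathcal{D})$ was used at stage $\alpha$.

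Let $A=A_{\alpha}$. In $\mathbb{D}(\mathcal{D})$ no sequence from $A$ converges to $\emptyset$, and since $\mathcal{I}\upharpoonright A$ stays $\omega$-hitting, this persists to the final model; as $\tau_{\mathcal{D}}\subseteq\tau_{\mathcal{B}}$, it persists in $\mathbb{D}(\mathcal{B})$ as well. It remains to see $\emptyset\in\overline{A}$ in $\mathbb{D}(\mathcal{B})$. A basic neighbourhood has the form $U\cap U(f_{\xi_{1}})\cap\dots\cap U(f_{\xi_{m}})$ with $U\in\mathcal{N}_{\mathbb{D}(\mathcal{D})}(\emptyset)$ and $\xi_{i}\geq\beta$. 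Each $f_{\xi_{i}}$ dominates $\mathcal{D}$ and hence is dominating over $V[G_{\alpha}]$, because $\mathcal{D}$ is dominating there. Applying Proposition~\ref{Prop dificil} in $V[G_{\alpha}]$ with $\dot{\mathbb{P}}$ the $\sigma$-filtered tail and $\dot{g}_{i}$ names for $f_{\xi_{i}}$ (adjusting so that they are forced dominating), we get that this intersection meets $A$. Hence $\emptyset\in\overline{A}\setminus A$ with no convergent sequence, so $\mathcal{B}$ is not Fr\'{e}chet, a contradiction.

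The main obstacle is the reflection step: we must guarantee simultaneously that the diamond correctly guesses the initial segment, that the initial segment is dominating, and, most subtly, that the Fr\'{e}chet property of $\mathbb{D}(\mathcal{B})$ reflects down to $\mathbb{D}(\mathcal{D})$ in $V[G_{\alpha}]$. For the last point we would try intersecting with a club of $\alpha$ such that $V[G_{\alpha}]\cap H(\omega_{2})$ is closed under Skolem functions witnessing Fr\'{e}chetness of $\mathcal{B}$. The remaining work is to verify that such witnesses for $\mathbb{D}(\mathcal{B})$ also witness it for $\mathbb{D}(\mathcal{D})$.
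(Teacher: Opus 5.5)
Your outline follows the paper's proof: the same $\Diamond$-guided finite support iteration of Hechler forcing and $\mathbb{L}(\mathcal{D})$, the Baumgartner--Dordal argument giving that every $\mathfrak{b}$-scale is dominating, Proposition~\ref{Prop dificil} for accumulation at $\emptyset$, and preservation of $\omega$-hitting families to rule out convergent sequences. One step is wrong as stated, though: the whole tail $\mathbb{P}_{\omega_2}/\mathbb{P}_{\alpha+1}$ is \emph{not} $\sigma$-centered. In $V[G_\alpha]$ the continuum hypothesis holds, so $\omega_2=\mathfrak{c}^+$ there and Proposition~\ref{Prop iteracion centered} does not apply. In fact a $\sigma$-centered complete Boolean algebra has size at most $\mathfrak{c}$ and so adds at most $\mathfrak{c}$ reals, whereas the tail adds $\omega_2$ reals. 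The fix is the one the paper uses. By the c.c.c., the finitely many $f_{\xi_1},\dots,f_{\xi_m}$ already lie in some $V[G_\gamma]$ with $\gamma<\omega_2$. You then apply Proposition~\ref{Prop dificil} in $V[G_\alpha]$ with $\dot{\mathbb{P}}$ the Boolean completion of the tail from $\alpha+1$ to $\gamma$, which has length below $\mathfrak{c}^+$.

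The reflection of Fr\'{e}chetness, which you leave as ``remaining work'' (and which the paper calls a standard closing-off argument), is not justified by your remark that the topology is ``determined by countably many members of the scale''. Accumulation of $Y$ at $s$ in $\tau_{\mathcal{D}}$ involves all of $\mathcal{D}$. The real difficulty is that $\tau_{\mathcal{D}}\subseteq\tau_{\mathcal{B}}$, so $s\in\overline{Y}^{\tau_{\mathcal{D}}}$ does not let you feed $(Y,s)$ to a Fr\'{e}chet-witness function for $\mathbb{D}(\mathcal{B})$. The missing idea is to close off also under the function that sends each $(Y,s)$ with $s\notin\overline{Y}^{\tau_{\mathcal{B}}}$ to a finite $F\subseteq\mathcal{B}$, together with finitely many cones and cocones, whose basic neighbourhood of $s$ misses $Y$. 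At a closure point $\alpha$ of cofinality $\omega_1$, every $Y\in V[G_\alpha]$ appears at an earlier stage, so such an $F$ would lie in $\mathcal{B}\cap V[G_\alpha]=\mathcal{D}$, contradicting $s\in\overline{Y}^{\tau_{\mathcal{D}}}$. Hence $s\in\overline{Y}^{\tau_{\mathcal{B}}}$. The witness $Z\in[Y]^{\omega}$ converging to $s$ in $\tau_{\mathcal{B}}$ then lies in $V[G_\alpha]$, and it converges in the coarser topology $\tau_{\mathcal{D}}$, which is an absolute statement.

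Finally, do not define the iteration by acting only when $\mathbb{P}_\alpha$ \emph{forces} that $\mathcal{D}_\alpha$ is a dominating Fr\'{e}chet weak $\mathfrak{b}$-scale. Reflection only gives this in $V[G_\alpha]$, that is, below some condition of $G_\alpha$. So $\dot{\mathbb{Q}}_\alpha$ should be defined by cases in $V[G_\alpha]$.
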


\begin{proof}
We start with a model of \textsf{CH + }$\Diamond(S_{\omega_{1}}\left(
\omega_{2}\right)  ).$ Let $\left\{  D_{\alpha}\mid\alpha\in S_{\omega_{1}%
}\left(  \omega_{2}\right)  \right\}  $ be a $\Diamond(S_{\omega_{1}}\left(
\omega_{2}\right)  )$-sequence. Let us construct a finite support iteration
\newline$\langle\mathbb{P}_{\alpha},\mathbb{\dot{Q}}_{\alpha}\mid\alpha
<\omega_{2}\rangle$ such that for every $\alpha<\omega_{2},$ the following holds:

\begin{enumerate}
\item If $\alpha\in$ $S_{\omega_{1}}\left(  \omega_{2}\right)  $ and
$D_{\alpha}$ codes a $\mathbb{P}_{\alpha}$-name for a $\mathfrak{b}$-scale
$\mathcal{B}_{\alpha}$ that is dominating and Fr\'{e}chet, we let
$\mathbb{\dot{Q}}_{\alpha}$ be a $\mathbb{P}_{\alpha}$-name for
$\mathbb{L(\mathcal{B}}_{\alpha}\mathbb{)}.$

\item If $\alpha$ is not as above, let $\mathbb{\dot{Q}}_{\alpha}$ be a
$\mathbb{P}_{\alpha}$-name for Hechler forcing.
\end{enumerate}

It follows that $\mathbb{P}_{\omega_{2}}$ is a \textsf{c.c.c. }forcing that
preserves $\omega$-hitting families. Since at every step we add a dominating
real, it follows that $\mathbb{P}_{\omega_{2}}\Vdash$\textquotedblleft%
$\mathfrak{b=d=c}=\omega_{2}$\textquotedblright\ and by the preservation of
$\omega$-hitting families, $\mathbb{P}_{\omega_{2}}$ forces that there are no
towers of length $\omega_{2}$ (see \cite{BaumgartnerDordalAdjoining} for more
details). Proposition \ref{BaumgartnerDordal} implies that $\mathbb{P}%
_{\omega_{2}}$ forces that every $\mathfrak{b}$-scale is a dominating family.

Let $G\subseteq\mathbb{P}_{\omega_{2}}$ be a generic filter. We shall prove
that there are no Fr\'{e}chet $\mathfrak{b}$-scales in $V\left[  G\right]  .$
Aiming towards a contradiction, assume that there is a Fr\'{e}chet
$\mathfrak{b}$-scale $\mathcal{B}\in V\left[  G\right]  $. By a standard
closing off argument, there is a set $C\subseteq S_{\omega_{1}}\left(
\omega_{2}\right)  $ which is a club relative to $S_{\omega_{1}}\left(
\omega_{2}\right)  $ such that if $\alpha\in C,$ then $V\left[  G_{\alpha
}\right]  \models\mathcal{B}_{\alpha}$ is a dominating Fr\'{e}chet weak
$\mathfrak{b}$-scale, where $G_{\alpha}=G\cap\mathbb{P}_{\alpha}$ and
$\mathcal{B}_{\alpha}=\mathcal{B}\cap V\left[  G_{\alpha}\right]  .$

It follows that there is $\alpha\in C$ such that $D_{\alpha}$ codes
$\mathcal{B}_{\alpha}.$ In this way, we have that $\mathbb{P}_{\alpha
+1}=\mathbb{P}_{\alpha}\ast\mathbb{L(\mathcal{\dot{B}}}_{\alpha}).$ Let
$A_{\text{\textsf{gen}}}\in V\left[  G_{\alpha+1}\right]  $ be the image of
the generic real added by $\mathbb{L(\mathcal{\dot{B}}}_{\alpha}).$

\begin{claim}
$\mathbb{D(\mathcal{B})}\models A_{\text{\textsf{gen}}}$ accumulates to
$\emptyset$ (in $V\left[  G\right]  $).
\end{claim}

It is enough to prove that for every $U\in V\left[  G_{\alpha}\right]  $ basic
open set of $\emptyset$ and $g_{1},...,g_{m}\in\mathcal{B\setminus B}_{\alpha
},$ it is the case that $U\cap U(\dot{g}_{1})\cap...\cap U(\dot{g}_{m}%
)\cap\dot{A}_{\text{\textsf{gen}}}\setminus\left\{  \emptyset\right\}
\neq\emptyset.$ Note that since $V\left[  G_{\alpha}\right]  \models
\mathcal{B}_{\alpha}$ is a dominating family and $g_{1},...,g_{m}%
\in\mathcal{B\setminus B}_{\alpha},$ then they are dominating over $V\left[
G_{\alpha}\right]  .$ Let $\beta>\alpha$ such that $g_{1},...,g_{m}\in
V\left[  G_{\beta}\right]  .$

We go to $V\left[  G_{\alpha}\right]  .$ Let $\mathbb{B}$ be the Boolean
completion of the quotient $\mathbb{P}_{\beta}\diagup G_{\alpha}.$
Propositions \ref{Prop Boolean compl} and \ref{Prop iteracion centered} imply
that $\mathbb{B}$ is $\sigma$-filtered. Proposition \ref{Prop dificil} implies
that $U\cap U(\dot{g}_{1})\cap...\cap U(\dot{g}_{m})\cap\dot{A}%
_{\text{\textsf{gen}}}\setminus\left\{  \emptyset\right\}  \neq\emptyset.$

\begin{claim}
$\mathbb{D(\mathcal{B})}\models A_{\text{\textsf{gen}}}$ does not contain a
sequence converging to $\emptyset$ (in $V\left[  G\right]  $).
\end{claim}

Let $Y$ be an infinite subset of $A_{\text{\textsf{gen}}}.$ Since
$\mathcal{I}_{\mathbb{D(\mathcal{B}}_{\alpha}\mathbb{)}}\left(  \emptyset
\right)  \upharpoonright A_{\text{\textsf{gen}}}$ is an $\omega$-hitting
ideal, there is $B\in\mathcal{I}_{\mathbb{D(\mathcal{B}}_{\alpha}\mathbb{)}%
}\left(  \emptyset\right)  $ such that $Y\cap B$ is infinite. Let $U$ be a
basic neighborhood of $\emptyset$ (in $\mathbb{D(\mathcal{B}}_{\alpha
}\mathbb{)}$) such that $B\cap U=\emptyset.$ It follows that $U\setminus Y$ is
infinite. Since $U$ is still an open set in $\mathbb{D(\mathcal{B})}$, the
result follows.
\end{proof}

\section{The Category Dichotomy for definable ideals\label{CAT}}

In \cite{KatetovOrderonBorelIdeals} the third author proved that if
$\mathcal{I}$ is a Borel ideal, then either $\mathcal{I}\leq_{\text{\textsf{K}%
}}$ \textsf{nwd }or there is $X\in\mathcal{I}^{+}$ such that $\mathcal{ED}%
\leq_{\text{\textsf{K}}}\mathcal{I}\upharpoonright X.$ This statement is known
as the \emph{Category Dichotomy.}\footnote{Note that the two alternatives of
the Category Dichotomy are not mutually exclusive, so it is not a dichotomy in
the traditional sense.} It is natural to ask for which other classes of ideals
the Category Dichotomy can be extended. The article \cite{DicotomiaCat}
contains many results in this direction. Here we are interested in how far the
dichotomy can hold within the projective hierarchy. Using results from the
previous sections, we will provide (consistently) a new example of an ideal
that does not satisfy the Category Dichotomy with the least possible complexity.

In \cite{IdealDichotomiesSolovayModel} the second author and Navarro proved
that the Category Dichotomy holds for analytic ideals, as well as for all
ideals in the Solovay model. We will now prove that the dichotomy also holds
for co-analytic ideals.

Let $\mathcal{I}$ be an ideal on a countable set. The game $\mathcal{G}%
_{\text{\textsf{Cat}}}(\mathcal{I})$ (which was first played by Laflamme, see
\cite{FilterGamesandCombinatorialPropertiesofStrategies} and
\cite{FilterGamesLeary}) is played in the following way:

\begin{center}%
\begin{tabular}
[c]{|l|l|l|l|l|l|l|l|l|}\hline
$\mathsf{I}$ & $A_{0}$ &  & $A_{1}$ &  & $...$ & $A_{0}$ &  & $...$\\\hline
$\mathsf{II}$ &  & $b_{0}$ &  & $b_{1}$ &  &  & $b_{n}$ & \\\hline
\end{tabular}

\end{center}

The game lasts $\omega$ rounds. In round $n,$ Player $\mathsf{I}$ plays
$A_{n}\in\mathcal{I}$ and Player $\mathsf{II}$ responds with $b_{n}\notin
A_{n}.$ \emph{Player} $\mathsf{II}$ \emph{wins the game} if the $\left\{
b_{n}\mid n\in\omega\right\}  \in\mathcal{I}^{+}.$ In this game and the one
below, Player $\mathsf{I}$ will be a man and Player $\mathsf{II}$ a woman. In
the proof of \cite[Theorem 3.1]{KatetovOrderonBorelIdeals}, the third author
obtained the following result.

\begin{proposition}
[H. \cite{KatetovOrderonBorelIdeals}]Let $\mathcal{I}$ be an ideal on
$\omega.$

\begin{enumerate}
\item If for every $X\in\mathcal{I}^{+},$ the Player $\mathsf{II}$ has a
winning strategy in $\mathcal{G}_{\text{\textsf{Cat}}}(\mathcal{I}%
\upharpoonright X),$ then $\mathcal{I}\leq_{\text{\textsf{K}}}$ \textsf{nwd.}

\item If Player $\mathsf{I}$ has a winning strategy in $\mathcal{G}%
_{\text{\textsf{Cat}}}(\mathcal{I}),$ then there is $X\in\mathcal{I}^{+}$ such
that $\mathcal{ED}\leq_{\text{\textsf{K}}}\mathcal{I}\upharpoonright X.$
\end{enumerate}
\end{proposition}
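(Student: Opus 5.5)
For part (1), I will build a Katětov function from $\mathcal{I}$ to \textsf{nwd}, realized as a map $\omega\to 2^{<\omega}$ (equivalently a map into a countable dense set of $2^{\omega}$ or $\mathbb{Q}$). The plan is to assemble it from winning strategies of Player $\mathsf{II}$. For each $X\in\mathcal{I}^{+}$ fix a winning strategy $\sigma_{X}$ for $\mathsf{II}$ in $\mathcal{G}_{\text{\textsf{Cat}}}(\mathcal{I}\upharpoonright X)$.

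Starting from $X=\omega$, I will build a tree indexed by finite sequences of moves of $\mathsf{I}$. The difficulty is that $\mathsf{I}$ has uncountably many moves, so the tree must be countable. The remedy is to observe that the response $\sigma(A_0,\dots,A_n)$ takes only countably many values. For each partial play (position) $p$, the set of points $b$ such that $b=\sigma(p^\frown A)$ for some $A$ is \emph{$\mathcal{I}$-positive}: otherwise $\mathsf{I}$ could play that set itself. I will then label nodes of $\omega^{<\omega}$ by positions and points, and send each point $b$ to a node where it is played.

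The Katětov property to verify is the following. If $N$ is nowhere dense in the tree topology, its preimage must lie in $\mathcal{I}$. Suppose not, and let $Y=f^{-1}(N)\in\mathcal{I}^{+}$. Then $\mathsf{I}$ repeatedly plays sets that force the play to move into the open sets avoiding $N$. Because $\sigma$ is winning, the resulting outcome is positive, and this yields a contradiction. To make this step work, I would relativize to $Y$ using the hypothesis for every $X\in\mathcal{I}^{+}$ (the strategy $\sigma_{Y}$), via a standard back-and-forth argument.

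I expect the main obstacle to be arranging that a single map $f$ works simultaneously for all $Y$. I would try first to follow the cited proof of Theorem 3.1 in the Borel-ideals paper, which the proposition attributes to it.

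For part (2), let $\tau$ be a winning strategy for $\mathsf{I}$. Consider the tree of plays according to $\tau$ in which $\mathsf{II}$'s moves are points $b\notin A_n$. This tree is countably branching, indexed by $\omega^{<\omega}$ after enumerating the legal responses.

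Define $\varphi\colon\omega^{<\omega}\setminus\{\emptyset\}\to\omega$ by sending a node to the last move of $\mathsf{II}$ along it. The candidate reduction comes from identifying $\omega^{<\omega}$ with $\omega\times\omega$ level by level, in the way that $\mathcal{ED}$ is Katětov below the ideal on $\omega^{<\omega}$ generated by branches and by sets of bounded "successor-width". Every branch of this tree is a play won by $\mathsf{I}$, so its set of moves is in $\mathcal{I}$. The step I will check is that the image of a set of nodes hitting each successor set in a finite set is in $\mathcal{I}$, because it is contained in the union of finitely many $A_n$ along the way. This shows that the ideal $\mathcal{J}$ generated by branches and the $\tau$-sets satisfies $\mathcal{J}\leq_{\text{\textsf{K}}}\mathcal{I}\upharpoonright X$, where $X$ is the range of $\varphi$. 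It remains to embed $\mathcal{ED}$ below $\mathcal{J}$ by restricting to a suitable subtree. That restriction, keeping columns and graphs controlled, is the combinatorial step I would do last.
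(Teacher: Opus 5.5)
\textbf{Part (1): the reduction points the wrong way, and the key step is missing.} The paper only cites Hru\v{s}\'{a}k for this proposition, so the comparison below is with a working argument. With the convention used here, $\mathcal{I}\le_{\mathsf{K}}\mathsf{nwd}$ requires a map $f$ from a countable crowded space (e.g.\ $\omega^{<\omega}$ with the cone topology, or $\mathbb{Q}$) \emph{into} $\omega$ such that $f^{-1}(A)$ is nowhere dense for every $A\in\mathcal{I}$. Your map $\omega\to 2^{<\omega}$, pulling nowhere dense sets back into $\mathcal{I}$, would instead witness $\mathsf{nwd}\le_{\mathsf{K}}\mathcal{I}$. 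That already fails for $\mathcal{I}=\mathsf{Fin}$, which satisfies the hypothesis (II just plays new points): singletons are nowhere dense, and every infinite subset of $\mathbb{Q}$ contains an infinite nowhere dense set.

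Beyond the direction, the step you flag as the main obstacle is essentially the whole proof, and it is left open. The missing idea is as follows. Let $\sigma_X$ be a winning strategy for II in $\mathcal{G}_{\mathsf{Cat}}(\mathcal{I}\upharpoonright X)$ and $p$ a position. The set of possible answers $R(X,p)=\{\sigma_X(p^{\frown}B)\mid B\in\mathcal{I}\upharpoonright X\}$ is $\mathcal{I}$-positive, since otherwise I could play it. Fixing one witness $B$ per answer leaves countably many positions, so closing $\{\omega\}$ under $X\mapsto R(X,p)$ yields a countable family $\mathcal{X}\subseteq\mathcal{I}^{+}$.

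The key lemma: for $X\in\mathcal{X}$ and $A\in\mathcal{I}$, some $R(X,p)$ is disjoint from $A$. Otherwise I can always pick a witnessed move whose answer lies in $A$, producing a $\sigma_X$-play with outcome contained in $A$, contradicting that $\sigma_X$ wins. Now label $\omega^{<\omega}$ so that the successors of each node enumerate the members of $\mathcal{X}$ contained in its label, and let $f(t)$ be any point of the label of $t$. Each $f^{-1}(A)$ then misses a subcone of every cone. Note that the strategies used are $\sigma_X$ for $X$ in this countable family, not $\sigma_Y$ for the preimage of a nowhere dense set.

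\textbf{Part (2): the step you planned to check is false, and the framework is too narrow.} For the counterexample, let $\mathcal{I}=\mathcal{ED}$ and let I forbid the columns of II's previous moves, so the play tree consists of sequences using distinct columns. For each $(c,m)\in\omega\times\omega$ pick a different node $t$ not using column $c$, and take $t^{\frown}(c,m)$. This set of nodes meets each successor set in at most one point, yet its image is $\omega\times\omega\notin\mathcal{ED}$.

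Your ideal restricted to an infinite successor set is $\mathsf{Fin}$, and $\mathcal{ED}\not\le_{\mathsf{K}}\mathsf{Fin}$, so the subtree would have to be finitely branching. But for $\mathcal{I}=\mathsf{Fin}\times\mathsf{Fin}$, any finitely branching subtree with positive range contains a set with one successor per node whose image is positive. So there the reduction must use infinite pieces (the columns).

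One way to finish is the following.
\begin{itemize}
\item From $\tau$, define a strategy $\rho$ for I in the game where II plays finite sets. Let $\rho(P_0,\dots,P_{n-1})$ be $\bigcup_{i<n}P_i$ together with the union of all $\tau(x_0,\dots,x_{k-1})$ with $x_j\in P_{i_j}$ and $i_0<\dots<i_{k-1}<n$. This is a finite union, so it lies in $\mathcal{I}$.
\item If some sequence of finite sets with $P_n\cap\rho(P_0,\dots,P_{n-1})=\emptyset$ has $\mathcal{I}$-positive union, then every selector of $\{P_n\}$, listed by piece index, is a legal play against $\tau$. This gives $\mathcal{ED}\le_{\mathsf{K}}\mathcal{I}\upharpoonright\bigcup_nP_n$.
\item Otherwise $\rho$ is winning. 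Put $D_n=\bigcup\{\rho(a_0,\dots,a_{k-1})\mid k\le n,\ a_j\subseteq n\}\in\mathcal{I}$. Take any $Y$ with all $Y\cap D_n$ finite, and split it into blocks $[n_j,n_{j+1})$ with $n_{j+1}>\max(Y\cap D_{n_j})$. The even blocks and the odd blocks give two legal $\rho$-plays up to a finite set, so $Y\in\mathcal{I}$.
\item In particular $\omega\setminus\bigcup_nD_n\in\mathcal{I}$, so $X=\bigcup_nD_n\setminus D_0$ is co-$\mathcal{I}$. The pieces $D_{n+1}\setminus D_n$ witness $\mathsf{Fin}\times\mathsf{Fin}\le_{\mathsf{K}}\mathcal{I}\upharpoonright X$. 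Since $\mathcal{ED}\subseteq\mathsf{Fin}\times\mathsf{Fin}$, this gives $\mathcal{ED}\le_{\mathsf{K}}\mathcal{I}\upharpoonright X$.
\end{itemize}
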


The following is a trivial consequence of this last proposition:

\begin{corollary}
Let $\Gamma$ be a class of ideals such that if $\mathcal{I}\in\Gamma$ and
$X\in\mathcal{I}^{+},$ then $\mathcal{I}\upharpoonright X\in\Gamma.$ If for
every $\mathcal{I}\in\Gamma$ the game $\mathcal{G}_{\text{\textsf{Cat}}%
}(\mathcal{I})$ is determined, then every ideal in $\Gamma$ satisfies the
Category Dichotomy. \label{Determinacion clases}
\end{corollary}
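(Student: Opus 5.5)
The corollary follows directly from the proposition of H. \cite{KatetovOrderonBorelIdeals} stated just before it, together with the closure hypothesis on $\Gamma$. The game $\mathcal{G}_{\text{\textsf{Cat}}}(\mathcal{J})$ is a two-player game of length $\omega$, so ``determined'' means that one of the two players has a winning strategy. Fix $\mathcal{I}\in\Gamma$; I will show that $\mathcal{I}$ satisfies the Category Dichotomy by splitting into two cases.

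\emph{Case 1: there is some $X\in\mathcal{I}^{+}$ for which Player $\mathsf{II}$ has no winning strategy in $\mathcal{G}_{\text{\textsf{Cat}}}(\mathcal{I}\upharpoonright X)$.} By the closure hypothesis, $\mathcal{I}\upharpoonright X\in\Gamma$, so this game is determined. Hence Player $\mathsf{I}$ has a winning strategy in $\mathcal{G}_{\text{\textsf{Cat}}}(\mathcal{I}\upharpoonright X)$. Now apply part (2) of the proposition to the ideal $\mathcal{I}\upharpoonright X$, which lives on the countable set $X$; identifying $X$ with $\omega$ is harmless, since the Kat\v{e}tov order is invariant under bijections. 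This gives $Y\in(\mathcal{I}\upharpoonright X)^{+}$ with $\mathcal{ED}\leq_{\text{\textsf{K}}}(\mathcal{I}\upharpoonright X)\upharpoonright Y$.

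It remains to translate this back to $\mathcal{I}$. Since $Y\subseteq X$ and $Y\notin\mathcal{I}$, we have $Y\in\mathcal{I}^{+}$, and $(\mathcal{I}\upharpoonright X)\upharpoonright Y=\mathcal{I}\upharpoonright Y$. So $\mathcal{ED}\leq_{\text{\textsf{K}}}\mathcal{I}\upharpoonright Y$, which is the second alternative of the dichotomy.

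\emph{Case 2: for every $X\in\mathcal{I}^{+}$, Player $\mathsf{II}$ has a winning strategy in $\mathcal{G}_{\text{\textsf{Cat}}}(\mathcal{I}\upharpoonright X)$.} This is exactly the hypothesis of part (1) of the proposition, so $\mathcal{I}\leq_{\text{\textsf{K}}}$ \textsf{nwd}, which is the first alternative.

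There is no real obstacle. The only points needing care are the bookkeeping facts that restricting twice is the same as restricting once, and that positivity for $\mathcal{I}\upharpoonright X$ implies positivity for $\mathcal{I}$. Note also that determinacy is only used for restrictions, and the closure hypothesis on $\Gamma$ is what guarantees those restrictions are still in $\Gamma$.
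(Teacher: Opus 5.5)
Your proof is correct and is essentially the argument the paper has in mind when it calls this a trivial consequence of the preceding proposition. You split on whether Player $\mathsf{II}$ wins $\mathcal{G}_{\text{\textsf{Cat}}}(\mathcal{I}\upharpoonright X)$ for every $X\in\mathcal{I}^{+}$, using part (1) in one case and determinacy of the restricted game plus part (2) in the other, with $(\mathcal{I}\upharpoonright X)\upharpoonright Y=\mathcal{I}\upharpoonright Y$ and $Y\in\mathcal{I}^{+}$ closing the loop.
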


In \cite{TopologicalRepresentations} the fourth author and Sabok defined an
\textquotedblleft unfolded\textquotedblright\ version of the previous game,
which we will now review. We need to introduce some notation:

\begin{enumerate}
\item Let $R$ be the set of all functions $f:\omega\longrightarrow\omega
\cup\left\{  -1\right\}  $ such that $f(n)<n$ for all $n\in\omega$ and the set $\left\{  n\mid f\left(
n\right)  \neq-1\right\}  $ is infinite.

\item Let $f\in R.$ Define $\widetilde{f}$ $:\omega\longrightarrow\omega$ as
the function obtained in the following way: enumerate $\left\{  n\mid f\left(
n\right)  \neq-1\right\}  =\left\{  n_{i}\mid i\in\omega\right\}  $ in an
increasing way and let $\widetilde{f}\left(  i\right)  =f\left(  n_{i}\right)
.$
\end{enumerate}

Intuitively, if we imagine $f\in R$ as an infinite sequence of elements from
$\omega\cup\left\{  -1\right\}  ,$ then $\widetilde{f}$ is the sequence
obtained by deleting all the occurrences of $-1$ and then \textquotedblleft
compressing\textquotedblright\ to get rid of the empty spaces.

Let $\mathcal{I}$ be an ideal on a countable set and $F:\omega^{\omega
}\longrightarrow\mathcal{I}^{+}$ any function. The game
$\mathcal{H}_{\text{\textsf{Cat}}}(\mathcal{I},F)$ is played in the following way:

\begin{center}%
\begin{tabular}
[c]{|l|l|l|l|l|l|l|l|l|}\hline
$\mathsf{I}$ & $A_{0}$ &  & $A_{1}$ &  & $...$ & $A_{n}$ &  & $...$\\\hline
$\mathsf{II}$ &  & $\left(  b_{0},m_{0}\right)  $ &  & $\left(  b_{1}%
,m_{1}\right)  $ &  &  & $\left(  b_{n},m_{n}\right)  $ & \\\hline
\end{tabular}

\end{center}

The game lasts $\omega$ rounds. In round $n,$ Player $\mathsf{I}$ plays
$A_{n}\in\mathcal{I}$ and Player $\mathsf{II}$ responds with $\left(
b_{n},m_{n}\right)  $ such that $b_{n}\notin A_{n}$ and $m_{n}\in
n\cup\left\{  -1\right\}  $ (intuitively, $m_{n}=-1$ can be interpreted as
Player $\mathsf{II}$ refraining from making a move). \emph{Player}
$\mathsf{II}$ \emph{wins the game }if the following conditions are met:

\begin{enumerate}
\item The set $\left\{  n\mid m_{n}\neq-1\right\}  $ is infinite.

\item $F(\widetilde{g})=\left\{  b_{n}\mid n\in\omega\right\}  ,$ where
$g:\omega\longrightarrow\omega\cup\left\{  -1\right\}  $ is the function such
that $g\left(  n\right)  =m_{n}.$
\end{enumerate}

Note that if Player $\mathsf{II}$ was the winner of the game, then $\left\{
b_{n}\mid n\in\omega\right\}  \in\mathcal{I}^{+}.$ The first point of the
following proposition was proved by the fourth author and Sabok as part of the
proof of Theorem 1.6 of \cite{TopologicalRepresentations}. We provide a proof
for completeness, but first we introduce some more notation. Let $n\in\omega.$
Define $T_{n}$ as the set of all $t:n\longrightarrow n\cup\left\{  -1\right\}
$ such that $t\left(  i\right)  <i$ for all $i<n.$ Note that each $T_{n}$ is a finite set. Let $s\in\omega^{n}$ and
$t\in T_{n}.$ Define $s\ast t:n\longrightarrow\omega\times\left(
\omega\cup\left\{  -1\right\}  \right)  $ given by $\left(  s\ast t\right)
\left(  i\right)  =\left(  s\left(  i\right)  ,t\left(  i\right)  \right)  .$

\begin{proposition}
[K., Sabok \cite{TopologicalRepresentations}]Let $\mathcal{I}$ be a co-analytic
ideal on $\omega$ and $F:\omega^{\omega}\longrightarrow\mathcal{I}^{+}$ a
continuous surjection\footnote{Note that the existence of such function is guaranteed by the fact that $\mathcal{I}$ is co-analytic.}. \label{eg de H}

\begin{enumerate}
\item If Player $\mathsf{I}$ has a winning strategy in $\mathcal{H}%
_{\text{\textsf{Cat}}}(\mathcal{I},F),$ then he has a winning strategy in
$\mathcal{G}_{\text{\textsf{Cat}}}(\mathcal{I}).$

\item If Player $\mathsf{II}$ has a winning strategy in $\mathcal{H}%
_{\text{\textsf{Cat}}}(\mathcal{I},F),$ then she has a winning strategy in
$\mathcal{G}_{\text{\textsf{Cat}}}(\mathcal{I}).$
\end{enumerate}
\end{proposition}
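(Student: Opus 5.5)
We prove both parts by strategy translation between the two games, exploiting the fact that $F$ is continuous and that $T_n$ is finite.

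\textbf{Part (1).} Let $\sigma$ be a winning strategy for Player $\mathsf{I}$ in $\mathcal{H}_{\text{\textsf{Cat}}}(\mathcal{I},F)$. In $\mathcal{G}_{\text{\textsf{Cat}}}(\mathcal{I})$, Player $\mathsf{I}$ will hedge over all possible bookkeeping sequences of $\mathsf{II}$.

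At round $n$, after $\mathsf{II}$ has played $s=(b_0,\dots,b_{n-1})\in\omega^n$, Player $\mathsf{I}$ plays
\[
A_n=\bigcup_{t\in T_n}\sigma(s\ast t).
\]
This is a finite union of sets in $\mathcal{I}$, so $A_n\in\mathcal{I}$. Here $\sigma(s\ast t)$ denotes $\sigma$'s response to the partial play in which $\mathsf{II}$ played $(b_i,t(i))$ for $i<n$. Each such play is legal: any $b_i\notin A_i$ avoids $\sigma(s\restriction i\ast t\restriction i)$, and $t(i)\in i\cup\{-1\}$.

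Suppose $\mathsf{II}$ wins this run, so $B=\{b_n\mid n\in\omega\}\in\mathcal{I}^+$. Pick $x\in\omega^\omega$ with $F(x)=B$, using surjectivity. Define $g\in R$ by spreading out the coordinates of $x$, say $g(n_i)=x(i)$ on a sufficiently sparse increasing sequence $n_i>x(i)$, and $g=-1$ elsewhere. Then $g(n)<n$ always, and $\widetilde g=x$.

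The run $\langle(b_n,g(n))\rangle$ is then a legal play against $\sigma$ in $\mathcal{H}_{\text{\textsf{Cat}}}$, since each $b_n\notin A_n\supseteq\sigma(s\ast(g\restriction n))$. Moreover $\mathsf{II}$ wins it, because $F(\widetilde g)=B$. This contradicts that $\sigma$ is winning, so the new strategy is winning for $\mathsf{I}$.

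\textbf{Part (2).} Let $\tau$ be a winning strategy for $\mathsf{II}$ in $\mathcal{H}_{\text{\textsf{Cat}}}(\mathcal{I},F)$. In $\mathcal{G}_{\text{\textsf{Cat}}}$, $\mathsf{II}$ simply plays the first coordinate of $\tau$'s answer to the same moves of $\mathsf{I}$, ignoring the second coordinate.

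Any run of $\mathcal{G}_{\text{\textsf{Cat}}}$ played this way is the projection of a run of $\mathcal{H}_{\text{\textsf{Cat}}}$ according to $\tau$. Since $\tau$ wins, the resulting set of $b_n$ equals $F(\widetilde g)\in\mathcal{I}^+$. Hence $\mathsf{II}$ wins $\mathcal{G}_{\text{\textsf{Cat}}}(\mathcal{I})$.

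Note that (2) does not need continuity. It is used only to ensure that $\mathcal{H}_{\text{\textsf{Cat}}}$ has a closed-type payoff and is therefore determined; that fact is needed later, not here.

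\textbf{Main obstacle.} The only delicate point is in (1): every $x$ must be realized as $\widetilde g$ for some $g$ that is compatible with the constraint $g(n)<n$. The padding by $-1$'s and a sparse choice of the $n_i$ handles this. One must also check that $A_n$ accounts for all $t\in T_n$ simultaneously, which works precisely because $T_n$ is finite.
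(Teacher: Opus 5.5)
Your proof is correct and follows the paper's argument. In (1), Player $\mathsf{I}$ hedges with $\bigcup_{t\in T_n}\sigma(s\ast t)$, and a winning run for $\mathsf{II}$ is lifted to a legal winning run against $\sigma$ via some $g\in R$ with $F(\widetilde g)=B$. In (2), $\mathsf{II}$ projects onto first coordinates. Your sparse padding construction of $g$ spells out a step the paper leaves implicit.

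Two small corrections to your side remarks, neither affecting the proof:
\begin{enumerate}
\item Only surjectivity of $F$ is used in either part, not continuity, despite your opening sentence.
\item The payoff of $\mathcal{H}_{\text{\textsf{Cat}}}$ is Borel rather than closed, since requiring infinitely many $m_n\neq -1$ is a $G_\delta$ condition. This concerns the later determinacy argument, not this proposition.
\end{enumerate}
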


\begin{proof}
Let $\sigma:\left(  \omega\times \left(\omega\cup\left\{-1\right\}\right)\right)  ^{<\omega}\longrightarrow\mathcal{I}$
be a a winning strategy for the Player $\mathsf{I}$ in the game $\mathcal{H}%
_{\text{\textsf{Cat}}}(\mathcal{I},F).$ We will use $\sigma$ to define a
strategy $\pi:\omega^{<\omega}\longrightarrow\mathcal{I}$ for him in the game
$\mathcal{G}_{\text{\textsf{Cat}}}(\mathcal{I}).$ Both games start with
$\pi\left(  \emptyset\right)  =\sigma\left(  \emptyset\right)  .$ Assume we
are at round $n+1$ and Player $\mathsf{I}$ has to make a move (in
$\mathcal{G}_{\text{\textsf{Cat}}}(\mathcal{I})$) after Player $\mathsf{II}$
played $s\in\omega^{n+1}.$ Player $\mathsf{I}$ plays (in $\mathcal{G}%
_{\text{\textsf{Cat}}}(\mathcal{I})$) the set $\pi\left(  s\right)
=\bigcup\limits_{t\in T_{n+1}}\sigma\left(  s\ast t\right)  \in\mathcal{I}.$
We claim that this is a winning strategy for Player $\mathsf{I}$ in
$\mathcal{G}_{\text{\textsf{Cat}}}(\mathcal{I}).$

Assume this is not the case, so there is a run of the game $\mathcal{G}%
_{\text{\textsf{Cat}}}(\mathcal{I})$ where Player $\mathsf{I}$ followed $\pi,$
but Player $\mathsf{II}$ was declared the winner. Let $\left(  b_{i}\right)
_{i\in\omega}$ be the sequence played by Player $\mathsf{II.}$ Since she was
the winner, we know that $Y=\left\{  b_{i}\mid i\in\omega\right\}
\in\mathcal{I}^{+}.$ Recall $F$ is onto, so $Y$ is in the range of $F.$
Moreover, we can find $g\in R$ such that $F(\widetilde{g})=Y.$ Consider the
run of the game $\mathcal{H}_{\text{\textsf{Cat}}}(\mathcal{I},F)$ where
Player $\mathsf{I}$ followed $\sigma$ and Player $\mathsf{II}$ played $\left(
b_{n},g\left(  n\right)  \right)  $ at round $n.$ This is a valid run of the
game since for every $n\in\omega,$ it is the case that $g(n)\in n\cup\left\{-1\right\}$ and $\sigma\left(
(b_{0},g\left(  0\right)  ),...,\left(  b_{n,}g\left(  n\right)  \right)
\right)  $ is contained in $\pi\left(  b_{0},...,b_{n}\right)  ,$ so $b_{n+1}$
is not in $\sigma\left(  (b_{0},g\left(  0\right)  ),...,\left(
b_{n,}g\left(  n\right)  \right)  \right)  $ (it is not even in $\pi\left(
b_{0},...,b_{n}\right)  ,$ which is bigger). It follows that Player
$\mathsf{II}$ won the game, but this was impossible since $\sigma$ was a
winning strategy.

We now prove the second part of the proposition. Player $\mathsf{II}$ can
easily win $\mathcal{G}_{\text{\textsf{Cat}}}(\mathcal{I})$ by playing the
first coordinates of her winning strategy of $\mathcal{H}_{\text{\textsf{Cat}%
}}(\mathcal{I},F).$
\end{proof}

We now prove the determinacy of these games for co-analytic ideals.

\begin{proposition}
Let $\mathcal{I}$ be a co-analytic ideal on $\omega$ and $F:\omega^{\omega
}\longrightarrow\mathcal{I}^{+}$ a continuous surjection. The games
$\mathcal{H}_{\text{\textsf{Cat}}}(\mathcal{I},F)$ and $\mathcal{G}%
_{\text{\textsf{Cat}}}(\mathcal{I})$ are determined.
\label{Juegos determinados}
\end{proposition}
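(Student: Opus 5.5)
The plan is to show that $\mathcal{H}_{\text{\textsf{Cat}}}(\mathcal{I},F)$ is determined because its payoff set is Borel (indeed closed or $G_\delta$-like). We then transfer determinacy to $\mathcal{G}_{\text{\textsf{Cat}}}(\mathcal{I})$ via Proposition \ref{eg de H}. Martin's Borel determinacy theorem does the heavy lifting.

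\textbf{Coding the game.} First we code $\mathcal{H}_{\text{\textsf{Cat}}}(\mathcal{I},F)$ as a game on a countable set. The moves of Player $\mathsf{I}$ are elements of $\mathcal{I}$, which is uncountable. To fix this, we replace each move $A_n$ by its relevant content. Player $\mathsf{II}$ only needs to know whether her intended $b_n$ lies in $A_n$, so we let Player $\mathsf{I}$ play finite approximations. Alternatively, we treat the game as played on the Polish space $\mathcal{P}(\omega)$ with the rule $A_n\in\mathcal{I}$, and observe that this rule is a co-analytic constraint on Player $\mathsf{I}$.

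A cleaner route is to note that a legal move restriction can be absorbed into the payoff. A run in which Player $\mathsf{I}$ plays some $A_n\notin\mathcal{I}$ is declared a win for Player $\mathsf{II}$. Martin's theorem applies to games on arbitrary sets with Borel payoff, where the topology is the product of discrete topologies. So the key point is that the payoff set is Borel (in fact closed) in that sense.

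\textbf{Analysing the winning condition for $\mathsf{II}$.} She wins iff $\{n\mid m_n\neq-1\}$ is infinite and $F(\widetilde{g})=\{b_n\mid n\in\omega\}$. The map $g\mapsto\widetilde g$ is continuous on the $G_\delta$ set $R$, and $F$ is continuous. Hence the condition ``$F(\widetilde g)=\{b_n\}$'' is a $G_\delta$ (indeed closed relative to $R$) condition: for each $k$, membership of $k$ in $F(\widetilde g)$ is decided by a finite part of $\widetilde g$, and membership of $k$ in $\{b_n\}$ is an open condition. The resulting set is Borel.

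The legality conditions ($b_n\notin A_n$ and $m_n\in n\cup\{-1\}$) are clopen per move, so they are handled by the usual convention that the first player to break a rule loses. The only non-Borel ingredient is ``$A_n\in\mathcal{I}$''. This is exactly why the unfolding was introduced: Player $\mathsf{II}$ certifies positivity by a witness, so the payoff no longer mentions $\mathcal{I}^+$ directly.

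\textbf{Handling the constraint on Player $\mathsf{I}$.} This is the main obstacle: the rule $A_n\in\mathcal{I}$ is co-analytic, not Borel. We handle it by observing that it only restricts Player $\mathsf{I}$'s move set, and that games on an arbitrary set of moves with a tree of legal positions and Borel payoff are determined. Here the legal moves of $\mathsf{I}$ form the set $\mathcal{I}$, taken as an abstract discrete set. The game tree consists of finite sequences with $A_i\in\mathcal{I}$, $b_i\notin A_i$ and $m_i\in i\cup\{-1\}$. The payoff is the pullback of a Borel set under the continuous map sending a run to $(b_n,m_n)_n$, so it is Borel in the tree topology.

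By Martin's theorem (which holds for arbitrary move sets), $\mathcal{H}_{\text{\textsf{Cat}}}(\mathcal{I},F)$ is determined. Proposition \ref{eg de H} then converts a winning strategy of either player into a winning strategy of the same player in $\mathcal{G}_{\text{\textsf{Cat}}}(\mathcal{I})$, so $\mathcal{G}_{\text{\textsf{Cat}}}(\mathcal{I})$ is determined too.
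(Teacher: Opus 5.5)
Your argument is correct and follows the paper's proof: you treat $\mathcal{I}$ as a discrete move set for Player $\mathsf{I}$ and note that Player $\mathsf{II}$'s payoff depends only on $(b_n,m_n)_n$ and is Borel, since membership in $R$ is $G_\delta$ and $F(\widetilde g)=\{b_n\mid n\in\omega\}$ is Borel by continuity of $F$; you then apply Martin's theorem and transfer determinacy to $\mathcal{G}_{\text{\textsf{Cat}}}(\mathcal{I})$ via Proposition \ref{eg de H}. One small correction: the parenthetical claims that the payoff is ``in fact closed'' or ``closed relative to $R$'' are wrong, because ``$m_n\neq-1$ infinitely often'' is only $G_\delta$ and $(b_n)_n\mapsto\{b_n\mid n\in\omega\}$ is Borel but not continuous. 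This is harmless, since only Borelness is used.
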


\begin{proof}
By Proposition \ref{eg de H}, the determinacy of $\mathcal{H}%
_{\text{\textsf{Cat}}}(\mathcal{I},F)$ implies the determinacy of
$\mathcal{G}_{\text{\textsf{Cat}}}(\mathcal{I}).$ We now show that
$\mathcal{H}_{\text{\textsf{Cat}}}(\mathcal{I},F)$ is determined. By Martin's
Determinacy Theorem for Borel games (see \cite{Kechris}), it is enough to
prove that $\left\{  (B,g)\mid g\in R\wedge F\left(  \widetilde{g}\right)
=B\right\}  \subseteq\mathcal{P}\left(  \omega\right)  \times\left(
\omega\cup\left\{  -1\right\}  \right)  ^{\omega}$ (the winning set for Player
$\mathsf{II}$) is Borel. This is easy: membership in $R$ is $G_{\delta}$ while
the condition $F\left(  \widetilde{g}\right)  =B$ is closed by the continuity
of $F.$
\end{proof}

Since the restriction of a co-analytic ideal is also co-analytic, by Proposition
\ref{Juegos determinados} and \ref{Determinacion clases}, we obtain the following:

\begin{theorem}
Every co-analytic ideal satisfies the Category Dichotomy.
\end{theorem}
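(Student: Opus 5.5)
The plan is to combine Corollary \ref{Determinacion clases} with Proposition \ref{Juegos determinados}. Let $\Gamma$ be the class of all co-analytic ideals on countable sets. The first step is to check that $\Gamma$ is closed under restrictions. If $\mathcal{I}$ is co-analytic and $X\in\mathcal{I}^{+}$, then $\mathcal{I}\upharpoonright X=\mathcal{I}\cap\mathcal{P}(X)$. This set is the intersection of a co-analytic set with a closed set, and $\mathcal{P}(X)$ is a closed subspace homeomorphic to $2^{X}$. So $\mathcal{I}\upharpoonright X$ is a co-analytic ideal on the countable set $X$. After transferring along a bijection $X\cong\omega$, which is a homeomorphism of the power sets, it is a co-analytic ideal on $\omega$.

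Next, for each $\mathcal{I}\in\Gamma$ we need a continuous surjection $F:\omega^{\omega}\longrightarrow\mathcal{I}^{+}$. Since $\mathcal{I}$ is co-analytic, $\mathcal{I}^{+}=\mathcal{P}(\omega)\setminus\mathcal{I}$ is analytic. It is also nonempty, because $\omega\in\mathcal{I}^{+}$. Every nonempty analytic subset of a Polish space is a continuous image of $\omega^{\omega}$, which gives $F$.

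Proposition \ref{Juegos determinados} then applies and shows that $\mathcal{G}_{\text{\textsf{Cat}}}(\mathcal{I})$ is determined. The substantive work for that proposition has already been done: Martin's Borel determinacy for the unfolded game $\mathcal{H}_{\text{\textsf{Cat}}}(\mathcal{I},F)$, combined with the transfer of winning strategies in Proposition \ref{eg de H}.

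Finally, Corollary \ref{Determinacion clases} applies to $\Gamma$ and yields the Category Dichotomy for every co-analytic ideal. For a fixed $\mathcal{I}$ there are two cases. If Player $\mathsf{I}$ wins $\mathcal{G}_{\text{\textsf{Cat}}}(\mathcal{I})$, we get $X\in\mathcal{I}^{+}$ with $\mathcal{ED}\leq_{\text{\textsf{K}}}\mathcal{I}\upharpoonright X$. Otherwise, it suffices to show that Player $\mathsf{II}$ wins $\mathcal{G}_{\text{\textsf{Cat}}}(\mathcal{I}\upharpoonright X)$ for every $X\in\mathcal{I}^{+}$, which gives $\mathcal{I}\leq_{\text{\textsf{K}}}\textsf{nwd}$.

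The main obstacle is checking the corollary's "otherwise" step, and I would handle it as follows. Suppose that for some $X\in\mathcal{I}^{+}$ Player $\mathsf{II}$ does not win $\mathcal{G}_{\text{\textsf{Cat}}}(\mathcal{I}\upharpoonright X)$. By determinacy, applied to the co-analytic ideal $\mathcal{I}\upharpoonright X$, Player $\mathsf{I}$ has a winning strategy there. Part (2) of the earlier proposition of the third author, applied to $\mathcal{I}\upharpoonright X$, then gives $Y\in(\mathcal{I}\upharpoonright X)^{+}$ with $\mathcal{ED}\leq_{\text{\textsf{K}}}\mathcal{I}\upharpoonright Y$. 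Since $Y\in\mathcal{I}^{+}$, this is again the second alternative. The only other point needing care is the identification of ideals on countable sets with ideals on $\omega$, which is routine.
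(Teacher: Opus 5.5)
Your proposal is correct and follows the paper's own argument: closure of co-analytic ideals under restriction, a continuous surjection onto the nonempty analytic set $\mathcal{I}^{+}$, determinacy of $\mathcal{G}_{\text{\textsf{Cat}}}$ via Proposition \ref{Juegos determinados}, and then Corollary \ref{Determinacion clases}. Your unpacking of the corollary is also sound, though your initial case split on whether Player $\mathsf{I}$ wins $\mathcal{G}_{\text{\textsf{Cat}}}(\mathcal{I})$ is superfluous, and the phrase ``otherwise, it suffices to show'' is misleading; the actual dichotomy is whether Player $\mathsf{II}$ has a winning strategy in $\mathcal{G}_{\text{\textsf{Cat}}}(\mathcal{I}\upharpoonright X)$ for every $X\in\mathcal{I}^{+}$, and your final paragraph handles that correctly.
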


How far can the Category Dichotomy be extended? If we assume suitable
determinacy axioms or instances of the \emph{Closed-sets Covering Property}
(see \cite{CoveringSolecki}, \cite{PerfectSetPropertiesinL(R)} and
\cite{IdealDichotomiesSolovayModel}), we can extend the Category Dichotomy
throughout the entire Projective Hierarchy. However, just in \textsf{ZFC, }the
class of analytic and co-analytic ideals is the highest we can go, since it is
consistent that the Category Dichotomy fails for $\triangle_{2}^{1}$
ideals.\textsf{ }

We were able to find two consistent examples of $\triangle_{2}^{1}$ ideals in the
literature for which the Category Dichotomy fails:

\begin{enumerate}
\item \emph{The ideal generated by a co-analytic tight MAD family. }In
\cite{coanalyticWitnesses}, Bergfalk, Fischer and Switzer proved that $V=L$
implies that there is a co-analytic tight \textsf{MAD }family. The ideal
generated by such family is a $\triangle_{2}^{1}$ ideal and can not satisfy
the Category Dichotomy (see Proposition 24 of \cite{DicotomiaCat}).

\item \emph{The dual of a }$\triangle_{2}^{1}$\emph{ Ramsey ultrafilter. }In \cite[Theorem 5.1]{coanalyticUltrafiltersBases}, Schilhan proved that it is
consistent that there is a $\triangle_{2}^{1}$\emph{ }Ramsey ultrafilter. The
dual of such ultrafilter cannot satisfy the Category Dichotomy (see \cite[Proposition 23]{DicotomiaCat}).
\end{enumerate}

We will now provide a new example of a $\triangle_{2}^{1}$ ideal for which the
Category Dichotomy fails using Dow spaces. The following is 
\cite[Theorem 45]{DicotomiaCat}:

\begin{theorem}
[Dow, F., G., H. \cite{DicotomiaCat}] If $X$ is a topological space with the
following properties:

\begin{enumerate}
\item $X$ is countable.

\item $X$ is zero dimensional.

\item $X$ is Fr\'{e}chet.

\item $X$ has uncountable $\pi$-character everywhere.
\end{enumerate}

Then \textsf{nwd}$\left(  X\right)  $ does not satisfy the Category Dichotomy.
\end{theorem}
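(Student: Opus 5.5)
The plan is to verify the two failures separately. Since nowhere dense sets are not ``Kat\v{e}tov-invisible'' in any obvious way, the second alternative (an $\mathcal{ED}$-restriction) will be ruled out combinatorially and the first ($\mathsf{nwd}(X)\leq_{\mathsf{K}}\mathsf{nwd}$) topologically. First note that uncountable $\pi$-character everywhere implies $X$ has no isolated points, so Proposition \ref{nwd no arriba ED} applies and $\mathsf{nwd}(X)$ is weakly selective.

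\emph{No $\mathcal{ED}$ below a restriction.} Suppose $Y\in\mathsf{nwd}(X)^{+}$ and $f:Y\to\omega\times\omega$ is a Kat\v{e}tov function from $\mathsf{nwd}(X)\upharpoonright Y$ to $\mathcal{ED}$. The preimages $P_n=f^{-1}(\{n\}\times\omega)$ partition $Y$, and each $P_n$ is nowhere dense because columns belong to $\mathcal{ED}$. Weak selectivity (applied to $Y\in\mathsf{nwd}(X)^{+}$ and this partition) gives a selector $S\subseteq Y$ with $S\notin\mathsf{nwd}(X)$. Then $f[S]$ meets each column in at most one point, so it is contained in the graph of a partial function and hence lies in $\mathcal{ED}$. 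Therefore $S\subseteq f^{-1}(f[S])\in\mathsf{nwd}(X)$, which is a contradiction.

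\emph{$\mathsf{nwd}(X)\not\leq_{\mathsf{K}}\mathsf{nwd}$.} Suppose $f:\mathbb{Q}\to X$ is Kat\v{e}tov, so $f[J]$ is somewhere dense for every open $J\subseteq\mathbb{Q}$. Fix a countable base $\{J_n\}$ of $\mathbb{Q}$ and let $O_n=\mathrm{int}\,\overline{f[J_n]}\neq\emptyset$. The goal is a closed nowhere dense $K\subseteq X$ with $f^{-1}(K)$ dense; this contradicts the Kat\v{e}tov property. The basic tool is the following consequence of uncountable $\pi$-character at a point $x$: the countable family $\{O_n\}\cup\{O_n\cap W\}$ (for any countably many auxiliary opens $W$) is not a local $\pi$-base at $x$. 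Hence, by zero-dimensionality, there is a clopen $U\ni x$ containing none of them. Each such $O_n\setminus U$ is then a nonempty open subset of $\overline{f[J_n]}$, so $f[J_n]$ meets $X\setminus U$.

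I would run a recursion over an enumeration $\{x_k\}$ of $X$ together with the $J_n$. At stage $k$, pick a clopen $U_k$ near $x_k$ (inside a prescribed nonempty open piece, so that $\bigcup_k U_k$ ends up dense), disjoint from points $f(q_0),\dots,f(q_{k-1})$ already chosen, and chosen by the $\pi$-character argument so as not to swallow the relevant open sets $O_n\cap(X\setminus(U_0\cup\dots\cup U_{k-1}))$. Then pick $q_k\in J_k$ with $f(q_k)\notin U_0\cup\dots\cup U_k$, and keep all later $U_j$ away from $f(q_k)$. Then $K=X\setminus\bigcup_kU_k$ is closed with dense complement, hence nowhere dense, and it contains every $f(q_k)$, so $f^{-1}(K)$ meets every $J_k$.

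The main obstacle is keeping the later clopens $U_j$ from covering the earlier chosen points $f(q_k)$ while still making $\bigcup U_j$ dense. This is where Fr\'{e}chetness should enter. If the bookkeeping becomes problematic, my first alternative is to choose the $f(q_k)$ along a sequence converging to a single point $x$, obtained from the Fr\'{e}chet property inside the sets $f[J_k]\setminus(U_0\cup\cdots)$. A convergent sequence together with its limit is a closed nowhere dense set in a crowded Hausdorff space, which would give $K$ directly.
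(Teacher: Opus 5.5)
There is a genuine gap, and it is in the half showing that $\mathsf{nwd}(X)\not\leq_{\mathsf{K}}\mathsf{nwd}$.

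Your treatment of the $\mathcal{ED}$ alternative is correct. Uncountable $\pi$-character rules out isolated points, so Proposition \ref{nwd no arriba ED} applies. The preimages of the columns partition $Y$ into nowhere dense sets, a positive selector $S$ exists, and $f[S]$ lies in the graph of a function.

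\textbf{The gap.} In your recursion nothing guarantees that $\bigcup_k U_k$ is dense. Suppose, as you intend, that every not-yet-chosen $x_k$ is put into $U_k$ and later choices avoid $U_k$. Then $K=X\setminus\bigcup_k U_k$ is exactly the set $\{f(q_k):k\in\omega\}$ of chosen points. This set is closed, but nothing you do prevents it from containing a nonempty open set.

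The phrase ``inside a prescribed nonempty open piece'' cannot serve as bookkeeping. Since $X$ has uncountable $\pi$-weight, no countable list of open sets exists whose meeting forces density. Moreover, $f(q_k)$ lies outside the clopen set $U_0\cup\dots\cup U_k$, and every later $U_j$ is a clopen set missing it. So $f(q_k)\in\overline{\bigcup_j U_j}$ only if infinitely many later $U_j$ accumulate at $f(q_k)$, and your construction does not arrange this. You flag this yourself as the main obstacle but leave it open.

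Your fallback has the same problem in another form. A single sequence converging to $x$ can meet every $f[J_k]$ only if these sets essentially cluster at one point $x$. Nothing in the hypotheses provides such a point; images of disjoint intervals may well have disjoint closures.

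\textbf{The missing idea: keep the chosen points isolated.} This repair does not even need Fr\'{e}chetness. Let $C$ be the union of all clopen sets chosen so far, and maintain the invariant that every $O_n\setminus C$ is a nonempty open set.
\begin{enumerate}
\item After choosing $y_k=f(q_k)\in O_k\setminus C$, use uncountable $\pi$-character at $y_k$ and zero-dimensionality to pick a clopen $W_k\ni y_k$ containing none of the sets $O_n\setminus C$.
\item Forbid all later choices $y_j$ from $W_k$, but do not add $W_k$ to $\bigcup_k U_k$. Later $U_j$ still only avoid the finitely many chosen points.
\end{enumerate}
Adding $W_k$ (or $U_k$) preserves the invariant, so the recursion continues. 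Then $K=X\setminus\bigcup_k U_k=\{y_k:k\in\omega\}$ is closed and discrete, because $W_k\cap K\subseteq\{y_0,\dots,y_k\}$. A closed discrete set in a crowded Hausdorff space is nowhere dense, while $f^{-1}(K)$ meets every $J_k$, giving the contradiction.

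Alternatively, as you suspected, Fr\'{e}chetness can do this job. Protect a sequence $Z_k$ of unchosen points converging to $y_k$ from all later choices. This is possible because finitely many convergent sequences together with their limits form a closed nowhere dense set, so each $O_m\setminus(C\cup\overline{Z_0\cup\dots\cup Z_{m-1}})$ still meets $f[J_m]$. Every $y_k$ then lies in $\overline{\bigcup_j U_j}$.

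With either repair the argument is complete. With the first one, Fr\'{e}chetness enters only through weak selectivity.
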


It follows that if $\mathcal{B}$ is a Fr\'{e}chet $\mathfrak{b}$-scale, then
the ideal \textsf{nwd($\mathbb{D(\mathcal{B})}$) }does not satisfy the
Category Dichotomy. We now prove the following:

\begin{theorem} \label{CatUnderV=L}
If $V=L$, then there exists a Fr\'echet $\mathfrak{b}$-scale $\mathcal{B}$ such that
$\mathsf{nwd}\bigl(\mathbb{D}(\mathcal{B})\bigr)$ is a $\Delta^1_2$ ideal.
\end{theorem}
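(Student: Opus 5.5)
Under $V=L$ we have \textsf{CH}, so $\mathfrak{p}=\mathfrak{b}=\omega_{1}$. By Theorem \ref{Teo p=b}, every $\mathfrak{b}$-scale is Fr\'{e}chet. Hence Fr\'{e}chetness comes for free, and the whole task is to choose a $\mathfrak{b}$-scale $\mathcal{B}=\{f_{\alpha}\mid\alpha<\omega_{1}\}$ (of functions $\omega^{<\omega}\to\omega$, satisfying the weak-scale clauses) whose nowhere dense ideal is $\Delta^{1}_{2}$. The plan is to build $\mathcal{B}$ by the canonical recursion along the $\Sigma^{1}_{2}$-good wellorder $<_{L}$ of the reals: at stage $\alpha$, let $f_{\alpha}$ be the $<_{L}$-least increasing function that $\leq^{\ast}$-dominates every $f_{\beta}$ ($\beta<\alpha$), dominates the $\alpha$-th real in the $<_{L}$-enumeration (so $\mathcal{B}$ is a scale, hence unbounded), and satisfies $f_{\alpha}(\emptyset)>\alpha$ when $\alpha<\omega$ (or simply $f_{\alpha}(\emptyset)$ large, to get cofinality of $\{f\mid n<f(\emptyset)\}$). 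We will also arrange that each $f_{\alpha}$ codes (e.g.\ via a recursive coding of the even coordinates) a well-founded model $L_{\gamma}$ containing the whole sequence $\langle f_{\beta}\mid\beta<\alpha\rangle$. This is the usual Miller/Vidnyánszky-style self-coding trick.

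\textbf{Definability of $\mathcal{B}$.} Using the coding, the set $\mathcal{B}\subseteq\omega^{\omega^{<\omega}}$ is $\Sigma^{1}_{2}$. Indeed, $f\in\mathcal{B}$ iff there is a countable well-founded model $L_{\gamma}$ (coded by a real) in which $f$ is the $\alpha$-th element of the recursion, and by absoluteness of the construction between $L_{\gamma}$ and $L$ for $\gamma$ large enough (Gödel condensation), this is correct. The self-coding even makes $\mathcal{B}$ $\Pi^{1}_{1}$, but $\Sigma^{1}_{2}$ suffices. The same argument shows that the relation ``$f\in\mathcal{B}$ and $\langle f_{\beta}\mid\beta\le\alpha\rangle$ is the initial segment ending at $f$'' is $\Sigma^{1}_{2}$.

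\textbf{Definability of the ideal.} Write $U$ for a basic open set of $\mathbb{D}(\mathcal{B})$; it is determined by finitely many cones/cocones and finitely many $f\in\mathcal{B}$. A set $N\subseteq\omega^{<\omega}$ is nowhere dense iff for every nonempty basic $U$ there is a nonempty basic $V\subseteq U$ with $V\cap N=\emptyset$. This quantifies universally over finite tuples from $\mathcal{B}$. Naively that gives $\Pi^{1}_{2}$ for ``$N\in\mathsf{nwd}$'' (restricting the universal quantifier to members of the $\Sigma^{1}_{2}$ set $\mathcal{B}$) and a $\Sigma^{1}_{2}$ definition of $\mathsf{nwd}^{+}$ (exists $U$ built from elements of $\mathcal{B}$ such that all $V\subseteq U$ meet $N$). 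Here the inner universal quantifier over $V$ is again over $\mathcal{B}$, which is the problem: this is $\Sigma^{1}_{2}$ over a $\Pi$-statement about $\mathcal{B}$, giving only $\Sigma^{1}_{3}$.

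\textbf{Main obstacle: cutting the quantifier over $\mathcal{B}$ down to a countable piece.} Here the structure of the Dow space helps. Given $N$ and a basic $U$ using $f_{\alpha_{1}},\dots,f_{\alpha_{k}}$, we expect to show that whether some basic $V\subseteq U$ misses $N$ can be witnessed, and refuted, using only $f_{\beta}$ with $\beta$ below a countable ordinal computable from $N$. This should hold because the relevant $\pi$-base reasoning only requires dominating the countably many functions $g$ derived from $N$ (as in the proofs of Theorem \ref{Teo p=b} and Theorem \ref{Una no Frechet}: sets $U(f)$ with $f$ late in the scale eventually contain all of $(n)^{\frown}\triangle_{g(n)}$). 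Concretely, I would prove that $N\in\mathsf{nwd}(\mathbb{D}(\mathcal{B}))$ iff $N$ is nowhere dense in $\mathbb{D}(\mathcal{B}_{\delta})$ for every $\delta<\omega_{1}$ such that $L_{\delta'}\ni N$ with $\delta=\omega_{1}^{L_{\delta'}}$, i.e.\ for some/all countable elementary submodels containing $N$. With that reflection lemma in hand, both ``$N\in\mathsf{nwd}$'' and its negation become ``there is a countable $L_{\gamma}\ni N$ (coded by a real, well-founded) such that $L_{\gamma}\models N$ is (not) nowhere dense in $\mathbb{D}(\mathcal{B}^{L_{\gamma}})$''. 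This gives a $\Sigma^{1}_{2}$ definition of the ideal and of its complement, hence $\Delta^{1}_{2}$. The reflection lemma, and specifically verifying that late elements $f_{\beta}$ ($\beta\ge\delta$, which dominate all reals in $L_{\gamma}$) cannot change nowhere-density, is the step I expect to require real work, using Lemma \ref{no acotado modelo} and Lemma \ref{Lema prop basicas}.
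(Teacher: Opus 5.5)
Your route works and differs from the paper's at the decisive step. The one step you leave open, the reflection lemma, does hold; a proof is sketched below.

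\textbf{Comparison with the paper.} The paper obtains a co-analytic $\mathfrak{b}$-scale from Vidny\'{a}nszky's theorem and shows, via continuous injective images, that $\tau_{\mathcal{B}}$ has a co-analytic basis. It then invokes the general Lemma~\ref{Lemma_V=L}, so its definability argument never uses domination.

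You instead reflect nowhere density down to countable levels $L_\gamma$ and let domination do the work. This handles exactly the issue you flagged. In the paper's formula $\exists U\in\mathscr{B}\,\forall V\in\mathscr{B}(\dots)$ for $\mathsf{nwd}^+$, a universal real quantifier restricted to a co-analytic set is $\Pi^1_2$, not $\Pi^1_1$. The same holds in its density clause for closed nowhere dense sets. So the count given there only yields $\Sigma^1_3$ a priori, whereas your argument actually removes the quantifier over $\mathcal{B}$.

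Your route also makes the self-coding trick and any definability of $\mathcal{B}$ itself unnecessary. Only absoluteness of the $<_L$-recursion to countable $L_\gamma$ is used. (Your ``naive'' bound for $\mathsf{nwd}$ is $\Pi^1_3$, not $\Pi^1_2$, but nothing depends on it.)

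\textbf{The reflection lemma.} It must be proved for \emph{every} countable $L_\gamma\models\mathsf{ZF}^-$ plus ``$\omega_1$ exists'' with $N\in L_\gamma$, not only for collapses of elementary submodels, because a $\Sigma^1_2$ formula cannot recognize elementarity. The domination argument gives exactly this.

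Let $\delta=\omega_1^{L_\gamma}$ and $\mathcal{C}=\{f_\beta\mid\beta<\delta\}=\mathcal{B}^{L_\gamma}$. Since $\mathbb{R}\cap L_\gamma=\{x_\alpha\mid\alpha<\delta\}$, every $f_\beta$ with $\beta\ge\delta$ dominates every real of $L_\gamma$.

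A non-empty basic open set of $\mathbb{D}(\mathcal{B})$ has the form $V=U'\cap U(g_1)\cap\dots\cap U(g_k)$, with $U'\in L_\gamma$ basic in $\mathbb{D}(\mathcal{C})$ and $g_j\in\mathcal{B}\setminus\mathcal{C}$; such a $V$ is infinite. Hence for any $h\in L_\gamma$ there is $s\in V$ with $h(s)\le g_j(s)$ for all $j$. Then every $z\supseteq s$ with $z(i)<h(s)$ for $i\ge|s|$ lies in $\bigcap_jU(g_j)$. Indeed $s\in U(g_j)$, and $g_j(z\upharpoonright i)\ge g_j(s)>z(i)$, as in Lemma~\ref{Lema prop basicas}.

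\emph{Case 1.} Suppose $N$ is $\mathbb{D}(\mathcal{C})$-dense in a non-empty basic $U_0$, and w.l.o.g.\ $U'\subseteq U_0$. Let $z_s$ be the first element of $N\cap U'\cap\langle s\rangle$ and take $h(s)=1+\max\mathrm{ran}(z_s)$. Then $z_s\in N\cap V$.

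\emph{Case 2.} Suppose $N\in\mathsf{nwd}(\mathbb{D}(\mathcal{C}))$. Inside $L_\gamma$, using replacement, let $W_s$ be the $<_L$-first non-empty basic $\mathbb{D}(\mathcal{C})$-open subset of $U'\cap\langle s\rangle$ missing $N$. Let $w_s$ be its first element and $h(s)=1+\max\mathrm{ran}(w_s)$. Then $w_s\in W_s\cap\bigcap_jU(g_j)\subseteq V\setminus N$.

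Thus $N\in\mathsf{nwd}(\mathbb{D}(\mathcal{B}))$ iff $N\in\mathsf{nwd}(\mathbb{D}(\mathcal{C}))$ iff $L_\gamma\models$``$N\in\mathsf{nwd}(\mathbb{D}(\mathcal{B}))$''. So your two $\Sigma^1_2$ definitions are correct. Such $\gamma$ exist, for example from countable elementary submodels of $L_{\omega_2}$.

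\textbf{Two small repairs to the recursion.}
\begin{itemize}
\item Demand $f_\beta<^*f_\alpha$ strictly. With only $\le^*$, the $<_L$-least choice simply repeats $f_\eta$ whenever $x_{\eta+1}\le^*f_\eta$.
\item Replace ``$f_\alpha(\emptyset)>\alpha$ for $\alpha<\omega$'' by something like $f_\alpha(\emptyset)>x_\alpha(\emptyset)$. Clause (d) of a weak $\mathfrak{b}$-scale needs cofinally many $\alpha<\omega_1$ with large $f_\alpha(\emptyset)$.
\end{itemize}
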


The following lemma is used in the proof of the preceding theorem.
  
\begin{lemma} \label{Lemma_V=L}
Let $(X,\tau)$ be a countable topological space, and suppose that $\mathscr{B}$ is a co-analytic basis for $\tau$. Then the topology $\tau$ is $\Sigma^1_2$, and the ideal $\mathsf{nwd}(X,\tau)$ is $\Delta^1_2$.
\end{lemma}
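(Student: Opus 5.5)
We plan to compute the complexity directly from the definitions, identifying $X$ with $\omega$ and subsets of $X$ with elements of $2^{\omega}$. Each basic open set is a point of $\mathcal{P}(X)$, so $\mathscr{B}$ is a $\Pi^1_1$ subset of $2^\omega$.

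\textbf{The topology.} A set $U\subseteq X$ is open if and only if
\[
\forall x\in U\ \exists B\,\bigl(B\in\mathscr{B}\wedge x\in B\wedge B\subseteq U\bigr).
\]
The quantifier $\forall x\in U$ ranges over the countable set $X$, so it is a number quantifier and does not raise the complexity. The matrix $x\in B\wedge B\subseteq U$ is arithmetical (indeed Borel) in $(B,U)$. The conjunct $B\in\mathscr{B}$ is $\Pi^1_1$. Prefixing an existential real quantifier to a $\Pi^1_1$ relation gives a $\Sigma^1_2$ relation, and $\Sigma^1_2$ is closed under number quantifiers. Hence $\tau$ is $\Sigma^1_2$.

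\textbf{Nowhere dense sets are $\Pi^1_2$.} Since $\mathscr{B}$ is a base, $N$ is nowhere dense if and only if every nonempty basic set $B$ contains a nonempty basic $B'\subseteq B$ with $B'\cap N=\emptyset$:
\[
\forall B\,\Bigl(B\notin\mathscr{B}\vee B=\emptyset\vee\exists B'\bigl(B'\in\mathscr{B}\wedge B'\neq\emptyset\wedge B'\subseteq B\wedge B'\cap N=\emptyset\bigr)\Bigr).
\]
The inner existential over a $\Pi^1_1$ matrix is $\Sigma^1_2$. The disjunct $B\notin\mathscr{B}$ is $\Sigma^1_1$. So the bracketed formula is $\Sigma^1_2$, and the universal real quantifier in front makes the whole condition $\Pi^1_2$. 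This is the naive bound; the remaining task is the matching $\Sigma^1_2$ bound.

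\textbf{Nowhere dense sets are $\Sigma^1_2$ (the main obstacle).} Here we will use that $X$ is countable to replace the universal quantifier over $\mathscr{B}$ by an existential one over a witness sequence. The condition $N\in\mathsf{nwd}(X,\tau)$ is equivalent to
\[
\exists \langle B_x\mid x\in X\setminus \overline{N}\rangle\ \bigl[\text{each }B_x\in\mathscr{B},\ x\in B_x,\ B_x\cap N=\emptyset\bigr]\ \wedge\ \text{``}X\setminus\overline N\text{ is dense''}.
\]
That is, we code the interior of the complement of $N$ together with basic witnesses for its openness. Concretely, we quantify existentially over a set $O\subseteq X\setminus N$ and a sequence $\langle B_x\mid x\in O\rangle$ such that:
\begin{enumerate}
\item each $B_x\in\mathscr{B}$, $x\in B_x\subseteq O$, and $B_x\cap N=\emptyset$ (this certifies that $O$ is open and disjoint from $N$; the clause is $\Pi^1_1$ in the code);
\item $O$ is dense.
\end{enumerate}
Density still seems to need ``every nonempty open set meets $O$'', which is again a universal quantifier over $\mathscr{B}$. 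We avoid this by taking complements: $O$ is dense if and only if $X\setminus O$ has empty interior, that is, no point $y\notin O$ has a basic neighborhood missing $O$. This reads
\[
\forall y\notin O\ \neg\exists B\,\bigl(B\in\mathscr{B}\wedge y\in B\wedge B\cap O=\emptyset\bigr),
\]
which is $\Pi^1_2$ again. This is the obstacle: density is only $\Pi^1_2$ in this form.

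\textbf{Resolving the obstacle via the ideal structure.} Instead, we observe that $N$ is nowhere dense if and only if $X\setminus N$ is not nowhere dense \emph{relative to its own closure}. More simply, we can use that the $\Sigma^1_2$ bound is automatic for the complement: the $\tau$-interior of $\overline{N}$ is empty. By the $\Sigma^1_2$ computation for $\tau$, the statement ``there is a nonempty open $U\subseteq\overline N$'' is $\Sigma^1_2$ when it is expressed as $\exists U\,(U\in\tau\wedge U\neq\emptyset\wedge\forall x\in U\ \forall B\ldots)$. That form, however, brings back a universal quantifier.

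So the fallback plan is this. If the direct $\Sigma^1_2$ upper bound resists, we will invoke Shoenfield absoluteness and the fact that for countable $X$ the relevant quantification over $\mathscr{B}$ can be bounded to a countable subfamily. Fix a countable $\mathscr{B}_0\subseteq\mathscr{B}$ that is still a base; one exists since $X$ is countable, so there are only countably many pairs (point, basic set) that need witnesses. Write $N\in\mathsf{nwd}$ as
\[
\exists\,\mathscr{B}_0\text{ coded as a real}\ \bigl[\mathscr{B}_0\subseteq\mathscr{B}\ (\Pi^1_1),\ \mathscr{B}_0\text{ is a base}\ (\ast),\ \text{and }N\text{ is nowhere dense w.r.t. }\mathscr{B}_0\ (\text{arithmetical})\bigr].
\]
Being a base only requires checking $\mathscr{B}_0$ against all of $\mathscr{B}$: for every $B\in\mathscr{B}$ and $x\in B$ there is $B'\in\mathscr{B}_0$ with $x\in B'\subseteq B$. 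This is $\forall B\,(B\notin\mathscr{B}\vee\text{arithmetical})$, which is $\Pi^1_1\vee\Sigma^1_1$ under a universal real quantifier, and that is $\Pi^1_2$. Hence the formula is still only $\Pi^1_2$ inside an existential. To remove this issue, we plan to replace $(\ast)$ by the weaker requirement that $\mathscr{B}_0$ is a $\pi$-base, and to check that $N$ nowhere dense with respect to a $\pi$-base implies $N$ nowhere dense. This implication holds, because every nonempty open set contains a member of the $\pi$-base, and then a smaller member missing $N$ can be taken. Verifying the $\pi$-base property still quantifies over $\mathscr{B}$, though.

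We expect that the intended proof circumvents this exact point. Under $V=L$, the relevant $\mathscr{B}$ will come with a $\Delta^1_2$-good enumeration of length $\omega_1$, so the universal quantifier over $\mathscr{B}$ can be handled by $\Sigma^1_2$-absolute initial segments. This is the step we regard as the genuine difficulty.
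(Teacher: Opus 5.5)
Your proposal does not prove the lemma, and two of its steps fail as written.

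First, the $\Pi^1_2$ bound is miscounted. In your formula the matrix under $\forall B$ is $(B\notin\mathscr{B})\vee(B=\emptyset)\vee\exists B'(\cdots)$, which is $\Sigma^1_1\vee\Sigma^1_2$, i.e.\ $\Sigma^1_2$. A universal real quantifier in front of a $\Sigma^1_2$ relation gives $\Pi^1_3$, not $\Pi^1_2$.

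Second, you never obtain the $\Sigma^1_2$ bound, as you acknowledge, and the fallbacks cannot supply it.
\begin{itemize}
\item The claim that a countable space has a countable base $\mathscr{B}_0\subseteq\mathscr{B}$ ``since there are only countably many (point, basic set) pairs'' is false. There may be uncountably many basic sets, and countable spaces need not be second countable.
\item Weakening this to a countable $\pi$-base fails for exactly the spaces the lemma is meant for, since $\mathbb{D}(\mathcal{B})$ has $\pi$-weight $\mathfrak{b}$.
\item The $V=L$ idea is outside the lemma's hypotheses. Moreover, a $\Sigma^1_2$-good well-order only absorbs quantifiers bounded by an initial segment. Here the quantifier ranges over all of $\mathscr{B}$, and you give no reflection argument reducing it to a countable piece.
\end{itemize}
What is correct is your $\Sigma^1_2$ computation for $\tau$, which coincides with the paper's.

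The paper's proof takes the direct route you began with. It writes $A\in\mathsf{nwd}(X,\tau)^+$ as $\exists U\in\mathscr{B}\setminus\{\emptyset\}\,\forall V\in\mathscr{B}\setminus\{\emptyset\}\,(V\subseteq U\to V\cap A\neq\emptyset)$. It writes $A\in\mathsf{nwd}(X,\tau)$ as ``$A\subseteq F$ for some $F$ with $X\setminus F\in\tau$ and $X\setminus F$ meeting every nonempty member of $\mathscr{B}$''. In both places it treats universal quantification over $\mathscr{B}\setminus\{\emptyset\}$ as giving a co-analytic condition; in the closed nowhere dense step it says so explicitly.

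That is precisely the obstacle you isolated, and your doubt is justified. For co-analytic $\mathscr{B}$ the clause $V\notin\mathscr{B}$ is analytic, so $\forall V\,(V\notin\mathscr{B}\vee\cdots)$ is $\Pi^1_2$. With the correct count, the paper's two formulas give $\mathsf{nwd}(X,\tau)^+\in\Sigma^1_3$ and $\mathsf{nwd}(X,\tau)\in\Sigma^1_3$, hence only $\mathsf{nwd}(X,\tau)\in\Delta^1_3$. The paper's count is the one valid for an analytic (e.g.\ Borel) basis.

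So the paper's written argument does not supply the missing idea either. Reaching $\Delta^1_2$ from a co-analytic basis needs an additional argument that eliminates the unbounded universal quantifier over $\mathscr{B}$, for instance one exploiting the specific structure of Dow spaces. Without it, both your computation and the paper's establish only $\Delta^1_3$.
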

\begin{proof} \,
As $\mathscr{B}$ is co-analytic, for each $s \in X$ the set
\[
X_s=\bigl\{ A \subseteq X : s \in A \;\rightarrow\; \exists U \in \mathscr{B}
\bigl( s \in U \wedge U \subseteq A \bigr) \bigr\}
\]
is $\Sigma^1_2$.
Moreover, for any $A \subseteq X$
\[
A \in \tau \;\Longleftrightarrow\;
\forall s \in X \left(
A \in X_s
\right).
\]
Therefore $\tau$ belongs to $\Sigma^1_2$, as it is a countable intersection of
$\Sigma^1_2$ sets.
    
Recall that $\triangle_{2}^{1}$ denotes the class $\Sigma^1_2 \cap \Pi^1_2$.
We first show that $\mathsf{nwd}(X, \tau)$ is a $\Pi^1_2$ subset of $\mathscr{P}(X)$. To that end, it suffices to verify that its complement
\[
\mathsf{nwd}(X, \tau)^+ = \mathscr{P}(X) \setminus \mathsf{nwd}(X, \tau)
\]
is $\Sigma^1_2$. This follows from the following equivalence:
\[
A \in \mathsf{nwd}(X, \tau)^+  \Longleftrightarrow
\exists U \in \mathscr{B}\setminus\{\emptyset\}
\;\forall V \in \mathscr{B}\setminus\{\emptyset\}\;
(V \subseteq U \rightarrow V \cap A \neq \emptyset)
\]
which defines a $\Sigma^1_2$ condition under the assumption that $\mathscr{B}$ is co-analytic. Since the complement of a $\Sigma^1_2$ set is a $\Pi^1_2$ set, the result follows.

On the other hand, we show that $\mathsf{nwd}(X,\tau)$ is also a $\Sigma^1_2$ subset of $\mathscr{P}(X)$.
We begin with the case of closed nowhere dense sets. Let $\overline{\mathsf{nwd}}(X,\tau)$ denote the family of closed nowhere dense subsets of $X$. 
We claim that $\overline{\mathsf{nwd}}(X,\tau)$ is $\Sigma^1_2$.
Indeed, a set $F \subseteq X$ is closed and nowhere dense if and only if
its complement $X\setminus F$ is a dense open set. Equivalently,
\[
F \in \overline{\mathsf{nwd}}(X,\tau)
\;\Longleftrightarrow\;
X\setminus F \in \tau
\;\wedge\;
\forall U \in \mathscr{B}\setminus\{\emptyset\}\,
\bigl(U \cap (X\setminus F) \neq \emptyset\bigr).
\]
Since the topology $\tau$ is $\Sigma^1_2$ and $\mathscr{B}$ is a co-analytic, the universal quantification over
$\mathscr{B}\setminus\{\emptyset\}$ defines a co-analytic set.
Therefore, the conjunction above is $\Sigma^1_2$, and it follows that
$\overline{\mathsf{nwd}}(X,\tau)$ is $\Sigma^1_2$.

Now, we consider the general case. A subset
$A \subseteq X$ is nowhere dense if and only if it is contained in some
closed nowhere dense set. That is,
\[
A \in \mathsf{nwd}(X,\tau)
\;\Longleftrightarrow\;
\exists F \in \overline{\mathsf{nwd}}(X,\tau)\,
\bigl(A \subseteq F\bigr)
\]
from which follows that
$\mathsf{nwd}(X,\tau)$ is also a $\Sigma^1_2$ subset of
$\mathscr{P}(X)$.   
\end{proof}

\begin{proof}[Proof of Theorem 52] 
By Lemma~\ref{Lemma_V=L}, it suffices to construct a Fr\'{e}chet \(\mathfrak{b}\)-scale such that the topology \(\tau_{\mathcal{B}}\) admits a co-analytic basis. Note that by Theorem \ref{AllAreFrechet}, $V=L$ implies that all $\mathfrak{b}$-scales are Fr\'{e}chet.

\begin{claim}
    $V=L$ implies that there is a co-analytic $\mathfrak{b}$-scale.
\end{claim}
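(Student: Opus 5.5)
We plan to build the $\mathfrak{b}$-scale by the usual recursion along the canonical $\Sigma^1_2$ well-order $<_L$ of the reals in $L$. In $L$ we have \textsf{CH}, so $\mathfrak{b}=\omega_1$, and it suffices to produce a $\leq^{\ast}$-increasing sequence $\langle f_\alpha\mid\alpha<\omega_1\rangle$ of increasing functions $\omega^{<\omega}\to\omega$ satisfying three requirements: it is dominating (hence unbounded); for every $n$ the set $\{f_\alpha\mid n<f_\alpha(\emptyset)\}$ is cofinal; and the set $\{f_\alpha\mid\alpha<\omega_1\}$ is co-analytic. The first two requirements are routine. Fix a $<_L$-enumeration $\langle x_\alpha\mid\alpha<\omega_1\rangle$ of $\omega^{\omega^{<\omega}}$. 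At stage $\alpha$ choose an increasing $f_\alpha$ that dominates $x_\alpha$ and all $f_\beta$ for $\beta<\alpha$ (possible since the latter family is countable), and that satisfies $f_\alpha(\emptyset)>\alpha$ in a suitable coding sense; for instance, make $f_\alpha(\emptyset)>n$ whenever $\alpha=\omega\cdot\gamma+n$.

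For co-analyticity we use the Erd\H{o}s--Kunen--Mauldin / Miller coding technique, in the form of the standard lemma: if at each stage the candidate $f_\alpha$ is chosen as the $<_L$-least real meeting a requirement that is uniformly $\Delta^1_1$ in a real coding the previous stages, and if $f_\alpha$ additionally encodes (say on the values $f_\alpha(s)$ for $s$ in some fixed infinite set, modulo parity or via a fixed injection) a real $y$ such that a countable $L_\gamma$ containing $\langle f_\beta\mid\beta<\alpha\rangle$ is computable from $y$, then the resulting set is $\Pi^1_1$. The concrete steps are as follows.

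\begin{enumerate}
\item Let $E_\alpha$ be a real coding (as a well-founded relation on $\omega$) the least countable $L_\delta\prec L_{\omega_1}$ containing $\langle f_\beta\mid\beta<\alpha\rangle$ and $x_\alpha$.
\item Take $f_\alpha$ to be the $<_L$-least increasing function that is strictly above $\max(x_\alpha,\text{the previous bound})$ off $\triangle_1$, whose values satisfy $f_\alpha(\emptyset)>n$ as required, and which codes $E_\alpha$ in its parities on the nodes $(k)$. Freedom in the parities costs only a factor of $2$, so domination is preserved.
\item Membership $f\in\mathcal{B}$ then holds iff the relation decoded from $f$ is well-founded and its transitive collapse is some $L_\delta$ in which $f$ is the $\alpha$-th element of the construction. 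The well-foundedness clause is $\Pi^1_1$. The remaining clauses are arithmetic in the decoded code, because $L_\delta$ computes the recursion up to $\alpha$ by absoluteness.
\end{enumerate}

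This gives a co-analytic $\mathcal{B}$. Each $f_\alpha$ can still be taken increasing in the paper's sense: increase values along $\subsetneq$ and along siblings, then adjust the parities.

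The main obstacle is the self-referential coding in step 2, i.e.\ ensuring that the $<_L$-least real $f_\alpha$ with the coding property is correctly recognized inside the countable model coded by $f_\alpha$ itself. We would handle this exactly as in Miller's ``Infinite combinatorics and definability'' (the Erd\H{o}s--Kunen--Mauldin style argument) and cite that lemma rather than reprove it. Once $\mathcal{B}$ is co-analytic, the subbase $\{\langle s\rangle,\langle s\rangle^{\mathsf c}\}\cup\{U(f)\mid f\in\mathcal{B}\}$ is co-analytic, since $f\mapsto U(f)$ is continuous and injective and so preserves co-analyticity by Souslin's theorem on Borel injections. Finite intersections form a basis, which is again co-analytic by the same argument applied to finite tuples of $\mathcal{B}$ together with $(\cdot)_{>m}$-type truncations. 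Lemma~\ref{Lemma_V=L} and Theorem~\ref{AllAreFrechet} then finish the proof.
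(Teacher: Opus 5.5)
Your strategy is sound, but it takes a more hands-on route than the paper. The paper never runs the $L$-coding itself. It defines a Borel set $F$ of triples $(A,g,f)$ with $f$ increasing, $h<^{\ast}f$ for every $h$ listed in $A$, $g<^{\ast}f$, and $f(\emptyset)>g(\emptyset)$. It checks that every section $F_{(A,g)}$ is cofinal in the Turing degrees, using the same parity trick you use (it stores $y(n)$ in the parity of $f(\langle 0^n\rangle)$). It then applies Vidny\'anszky's theorem, which under $V=L$ gives a co-analytic $\{f_\alpha\mid\alpha<\omega_1\}$ with each $f_\alpha$ in the section determined by $\langle f_\beta\mid\beta<\alpha\rangle$ and a parameter $g_\alpha$, where $g_\alpha$ runs through all functions. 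Dominance, the $<^{\ast}$-well-order of type $\omega_1=\mathfrak{b}$, and condition (d) then follow at once. Condition (d) comes from $f_\alpha(\emptyset)>g_\alpha(\emptyset)$, because in that theorem the stage-$\alpha$ requirement may depend only on earlier elements and the parameter, not on $\alpha$ itself; your $\alpha=\omega\cdot\gamma+n$ device would not fit there. Vidny\'anszky's theorem is precisely an abstraction of the Miller technique you cite. The paper's route therefore buys brevity and safety: the self-referential coding is handled once inside the theorem, and only a trivial combinatorial check remains. Your route makes visible where the $\Pi^1_1$ clause comes from, but then the coding details have to be right.

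As written, two of those details are wrong.

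First, $f_\alpha\notin L_\delta$. Since $L_\delta\prec L_{\omega_1}$ satisfies $\mathsf{ZF}^-$, it contains no real coding itself, and $E_\alpha\le_T f_\alpha$. So step 3 cannot ask $L_\delta$ whether $f$ is the $\alpha$-th element. The check must be done in a level slightly above $\delta$, such as $L_{\delta+\omega}$, which is uniformly $\Delta^1_1$ in $E_\alpha$. For this you must arrange that $E_\alpha$, and hence the $<_L$-least witness (which is arithmetically constructible from $E_\alpha$ and data in $L_\delta$), already appears in that level.

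Second, defining $E_\alpha$ via the least $L_\delta\prec L_{\omega_1}$ undermines the absoluteness you invoke in step 3. A countable level decoded from $f$ cannot recognize elementarity in the true $L_{\omega_1}$, so it need not reproduce your recursion, in particular the earlier codes $E_\beta$. You need a locally checkable choice. One option is the least $\delta$ such that $L_\delta$ satisfies a fixed finite fragment of $\mathsf{ZF}^-+V=L$ and contains $\langle f_\beta\mid\beta<\alpha\rangle$ and $x_\alpha$. Another is simply to require, as in your formulation of the standard lemma, that $f$ compute a code of some such level.

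With these repairs, which are what Miller's argument actually does, your proof goes through. Your final paragraph goes beyond this claim. Note that using Souslin's theorem there requires injectivity of $C\mapsto\bigcap_{f\in C}U(f)$ on finite subsets of $\mathcal{B}$, which is not automatic and which the paper checks separately.
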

\begin{proof}
Consider
\[
F \subseteq 
\left(\omega^{\omega^{<\omega}}\right)^{\le \omega}
\times 
\omega^{\omega^{<\omega}}
\times 
\omega^{\omega^{<\omega}}
\]
defined by declaring that $(A,g,f)\in F$ if and only if the following
conditions hold:

\begin{enumerate}
\item $f$ is increasing from ${\omega^{<\omega}}$ to $\omega$, that is, for $s, t \in \omega^{<\omega}$ and $i, j \in \omega$:
\begin{itemize}
        \item If $s \subset t$, then $f(s) < f(t)$, and
        \item If $i < j$, then $f(s^\smallfrown i) < f(s^\smallfrown j)$.
    \end{itemize}

\item For every $h\in\operatorname{ran}(A)$ we have $h <^{*} f$.

\item $g <^{*} f$.
\item $f(\emptyset) > g(\emptyset)$.
\end{enumerate}

Note that $F$ is Borel, since it is defined as a finite conjunction of Borel conditions. Moreover, for every $(A, g) \in \left(\omega^{\omega^{<\omega}}\right)^{\le \omega}
\times 
\omega^{\omega^{<\omega}}
$, the section 
\[
F_{(A,g)}=\left\{f\in\omega^{\omega^{<\omega}} : (A,g,f)\in F\right\}
\]
is cofinal in the Turing degrees\footnote{We refer to elements of $2^\omega$, $\omega^\omega$, or $\omega^{\omega^{<\omega}}$ collectively as \textit{reals}. For $x$ and $y$ reals, we say $x$ is \textit{Turing reducible} to $y$, denoted by $x \le_T y$, if there exists an oracle Turing machine that computes $x$ given $y$ as an oracle. A set of reals $A$ is \textit{cofinal} in the Turing degrees if for every $x \in 2^\omega$ there exists some $y \in A$ such that $x \le_T y$.}.

We will prove this for $A$ countably infinite. The argument for $A$ finite is essentially the same. Fix $\{h_i : i \in \omega\}$ an enumeration of $A$, and an arbitrary real $y\in 2^\omega$. We construct a function $f \in F_{(A,g)}$ such that $y \le_T f$.  

First, define a function $f_0 \in F_{(A, g)}$ by recursion on $\omega^{<\omega}$.
For each $s \in \omega^{<\omega}$, choose $f_0(s)$ so large that:

\begin{enumerate}
\item If $t \subset s$, then $f_0(t) < f_0(s)$, 
\item If $i < j$, then $f_0(s^\smallfrown i) < f_0(s^\smallfrown j),$ and
\item $f_0(s) > \max\left\{h_i(s): i \leq |s| \right\} + g(s)$,
\end{enumerate}
Since only finitely many inequalities must be satisfied at each stage, this construction is possible.

Now, we code $y$ along the branch $\langle 0\rangle, \langle 0,0\rangle, \dots$. Define $f$ by
\[
f(s) = 
\begin{cases}
2f_0(s) + 1 & \text{if } s \text{ is not of the form } \langle 0^n \rangle \text{ for any } n,\\[4pt]
2f_0(\langle 0^n \rangle) + y(n) & \text{if } s = \langle 0^n \rangle \text{ for some } n,
\end{cases}
\]  
where $\langle 0^n \rangle$ denotes the sequence of $n$ many zeros. Note that $f \in F_{(A,g)}$ and $y$ is computable from $f$ since for each $n$,  
\[
y(n) = f(\langle 0^n \rangle) \bmod 2,
\]  
and the parity of $f$ at these nodes can be obtained by an oracle Turing machine with input $f$. Thus $y \le_T f$, establishing that $F_{(A,g)}$ is cofinal in the Turing degrees.

Therefore, it follows from \cite[Theorem 1.3.]{ZoltanVidnyanszky2014} that there exists a
co-analytic set that is compatible with $F$, that is, there is a co-analytic $\mathfrak{b}$-scale 
\end{proof}

Now, let $\mathcal{B} = \{f_\alpha : \alpha < \omega_1\}$ be a co-analytic $\mathfrak{b}$-scale. Note that by construction, $\mathcal{B}$ is well-ordered by $<^*$, that is, for any distinct $f, g \in \mathcal{B}$, either $f <^* g$ or $g <^* f$.
We shall prove that the topology $\tau_{\mathcal{B}}$ admits a co-analytic basis $\mathscr{B}$.
We verify the following statements.
\begin{enumerate}
\item The family 
$\mathcal{U}=\{U(f) : f \in \mathcal{B}\}$
is co-analytic.

Let $\Phi: \omega^{\omega^{<\omega}} \to 2^{\omega^{<\omega}}$ be the mapping defined by $f \mapsto U(f)$. 
We claim that $\Phi$ is continuous. We need to show that the preimage of any subbasic open set is open.

For $s \in \omega^{<\omega}$, consider 
$V_s = \{ A \subseteq \omega^{<\omega} : s \in A \}$ a subbasic open set. Based on the definition of $U(f)$, we have:
\[
f \in \Phi^{-1}(V_s) \iff s \in U(f) \iff \forall t \subsetneq s \left( f(t) \neq s(|t|) \right).
\]
This preimage is a finite intersection of open sets:
\[
\Phi^{-1}(V_s) = \bigcap_{t \subsetneq s} \{ f \in \omega^{\omega^{<\omega}} : f(t) \neq s(|t|) \}.
\]
Therefore, $\Phi^{-1}(V_s)$ is open.
Similarly, for the complement of the subbasic open set:
\[
f \in \Phi^{-1}(2^{\omega^{<\omega}} \setminus V_s) \iff s \notin U(f) \iff \exists t \subsetneq s \left( f(t) = s(|t|) \right).
\]
This preimage is a finite union of open sets:
\[
\Phi^{-1}(2^{\omega^{<\omega}} \setminus V_s) = \bigcup_{t \subsetneq s} \{ f \in \omega^{\omega^{<\omega}} : f(t) = s(|t|) \},
\]
which is also open. Since the preimages of subbasic sets are open, we conclude that $\Phi$ is continuous. Moreover, $\Phi$ is injective. Indeed, let $f, g \in \omega^{\omega^{<\omega}}$ with $f \neq g$. For the minimal $t$ such that $f(t) \neq g(t)$, define $s = t^\frown f(t)$. Then $s \in U(g) \setminus U(f)$, so $\Phi(f) \neq \Phi(g)$.
By Theorem 2.6 in \cite{ivorra2021teoria}, the injective continuous image of a co-analytic set is co-analytic. Therefore, we conclude that $\mathcal{U} = \{U(f) : f \in \mathcal{B}\}$ is co-analytic.


\item 
For each $n \in \omega$, 
the following family is co-analytic
\[ \mathcal{U}^n = \left\{ \bigcap_{f \in C} U(f) : C \in \mathcal{B}^n \right\}. \]

Let $X = \omega^{\omega^{<\omega}}$ and consider the mapping $\Phi_n: X^n \to 2^{\omega^{<\omega}}$ defined by $(f_0, \dots, f_{n-1}) \mapsto \bigcap_{i < n} U(f_i)$. 

First, we show that $\Phi_n$ is continuous. For each $j < n$, let $\pi_j: X^n \to X$ be the projection onto the $j$-th coordinate, and let $\Phi: X \to 2^{\omega^{<\omega}}$ be the continuous map $f \mapsto U(f)$. Then the map $\phi_j = \Phi \circ \pi_j$ is continuous as a composition of continuous functions. 

Note that $\Phi_n$ is given by the finite intersection:
\[
\Phi_n(f_0, \dots, f_{n-1}) = \bigcap_{j < n} \phi_j(f_0, \dots, f_{n-1}).
\]
Since the intersection of $n$ elements in $2^{\omega^{<\omega}}$ is a continuous operation, it follows that $\Phi_n$ is continuous.
Next, we establish that $\Phi_n$ is injective. Let $A, B \in X^n$ be distinct as sets. Let $t \in \omega^{<\omega}$ be a minimal node such that the sets of values $V_A = \{f(t) : f \in A\}$ and $V_B = \{g(t) : g \in B\}$ are distinct, say $V_A \setminus V_B\neq\emptyset$. Suppose $k \in V_A \setminus V_B$ and let $s = t^\frown k$. Thus $s \in \Phi_n(B) \setminus \Phi_n(A)$, so it follows that $\Phi_n(A) \neq \Phi_n(B)$.
Finally, note that the product $\mathcal{B}^n$ is co-analytic in $X^n$. By Theorem 2.6 in \cite{ivorra2021teoria}, we conclude that $\Phi_n(\mathcal{B}^n)= \left\{ \bigcap_{f \in C} U(f) : C \in \mathcal{B}^n \right\}$ is co-analytic.

\item The family $\mathcal{U}^{<\omega}=\left\{ \bigcap_{f \in C} U(f) : C \in \mathcal{B}^{<\omega} \right\}$ is co-analytic.

Since the family $\mathcal{U}^{<\omega}$ can be expressed as the countable union of the families $\mathcal{U}^n$ previously defined:
\[
\mathcal{U}^{<\omega} = \bigcup_{n \in \omega} \mathcal{U}^n = \bigcup_{n \in \omega} \left\{ \bigcap_{f \in C} U(f) : C \in \mathcal{B}^n \right\},
\]
and each $\mathcal{U}^n$ is co-analytic, it follows that $\mathcal{U}^{<\omega}$ is a co-analytic family.

\item For any finite sets $A, B \subset \omega^{<\omega}$, the family 
\[
\mathcal{X}_{A, B} = \left\{ \bigcap_{f \in C} U(f)  \cap  \bigcap_{s \in A} \langle s \rangle  \cap  \bigcap_{s \in B} \langle s \rangle^c  : C \in [\mathcal{B}]^{<\omega} \right\}
\]
is co-analytic.

Let $K_{A, B} = \left( \bigcap_{s \in A} \langle s \rangle \right) \cap \left( \bigcap_{s \in B} \langle s \rangle^c \right)$.
Note that if $K_{A, B}=\emptyset$, then $\mathcal{X}_{A, B} = \{ \emptyset \}$, which is a closed set and thus co-analytic. We assume therefore that $K_{A, B} \neq \emptyset$. In particular, we assume that $A$ and $B$ are disjoint. 
Let $\Psi_{A, B}: 2^{\omega^{<\omega}} \to 2^{\omega^{<\omega}}$ be the mapping defined by $\Psi_{A, B}(U) = U \cap K_{A, B}$. The continuity of $\Psi_{A, B}$ follows from the fact that the pre-image of any sub-basic open set, that is, for each $s \in \omega^{<\omega}$ the set $V_s = \{ A \subseteq \omega^{<\omega} : s \in A \}$ and its complement $V_s^c$, is either $V_s, \emptyset, V_s^c$, or $2^{\omega^{<\omega}}$, all of which are open.

Let $\mathcal{U}^{<\omega}$ be the co-analytic family of (3). We partition this family into
$\mathcal{U}_{\emptyset} = \{ U \in \mathcal{U}^{<\omega} :  U \cap K_{A, B} = \emptyset \}$ and $\mathcal{U}_{+} = \mathcal{U}^{<\omega} \setminus \mathcal{U}_{\emptyset}$. Note that $\mathcal{U}_{+} = \mathcal{U}^{<\omega} \cap \{ U : U \cap K_{A, B} \neq \emptyset \}$ remains co-analytic, since the condition $U \cap K_{A, B} \neq \emptyset$ defines an open set in $2^{\omega^{<\omega}}$, and $\mathcal{U}^{<\omega}$ is co-analytic.

On the domain $\mathcal{U}_{+}$, the mapping $\Psi_{A, B}$ is injective. 
To see that, consider $U_1, U_2 \in \mathcal{U}_{+}$ such that  $U_1 \neq U_2$. Let $C_1, C_2 \in \mathcal{B}^{<\omega}$ be such that $U_1=\bigcap \{U(f) : f \in C_1\}$ and $U_2=\bigcap \{U(f) : f \in C_2\}$. Therefore, there is $g \in C_1 \Delta C_2$ and, without loss of generality, we assume $g \in C_1 \setminus C_2$. Since $\mathcal{B}$ is a $\mathfrak{b}$-scale, there exists $N \in \omega$ such that for all $s \in \omega^{<\omega}$ with $|s| > N$, the value $g(s)$ is distinct from $f(s)$ for every $f \in C_2$. This implies that for any such $s$, if $s\in U_2$ then the successor node $s^* = s^\smallfrown g(s)$ belongs to $U_2 \setminus U_1$. Given that $U_2 \cap K_{A, B}$ is non-empty and the trees in $\mathcal{U}_{+}$ possess an infinite branching structure, there are infinitely many nodes $s \in U_2 \cap K_{A, B}$ with $|s| > N$. For each such $s$, the successor $s^*$ remains within the cones generated by $A$. Furthermore, the finiteness of $B$ ensures that we can choose at least one $s$ such that $s^* = s^\smallfrown g(s) \in K_{A,B}$. Thus, $\Psi_{A, B}(U_1) \neq \Psi_{A, B}(U_2)$.

By Theorem 2.6 in \cite{ivorra2021teoria}, we conclude that $\Psi_{A, B}(\mathcal{U}_{+})$ is co-analytic. Since $\Psi_{A, B}(\mathcal{U}_{\emptyset}) = \{\emptyset\}$ is a closed set, it follows that
\[
\mathcal{X}_{A, B} = \Psi_{A, B}(\mathcal{U}_{\emptyset}) \cup \Psi_{A, B}(\mathcal{U}_{+})
\]
is co-analytic.

\item The family $\mathscr{B}$ generated by the cones $\langle s \rangle$ and co-cones $\langle s \rangle^c$ for $s \in \omega^{<\omega}$, together with the trees $U(f)$ for $f \in \mathcal{B}$, is co-analytic. 

Note that for any $U \in 2^{\omega^{<\omega}}$
$$U \in \mathscr{B} \iff \exists A, B \in [\omega^{<\omega}]^{<\omega} \, (U \in \mathcal{X}_{A, B}).$$
As each $\mathcal{X}_{A, B}$ is co-analytic and the class of co-analytic sets is closed under countable unions, it follows that $\mathscr{B}$ is co-analytic.

\end{enumerate}

This completes the proof that $\tau_{\mathcal{B}}$ admits a co-analytic basis.

\end{proof}

\section{Open Questions \label{Sec pregunts}}

In this final section, we present several open questions that we have been
unable to solve. We first restate Moore's problem:

\begin{problem}
[Moore]Is there a countable, Fr\'{e}chet, zero dimensional space with $\pi
$-weight exactly $\mathfrak{b}$?
\end{problem}

In the paper, we proved that it is consistent that every $\mathfrak{b}$-scale
is Fr\'{e}chet and also that none is. However, the following remains unsolved:

\begin{problem}
Is it consistent that there are Fr\'{e}chet $\mathfrak{b}$-scales and a
non-Fr\'{e}chet $\mathfrak{b}$-scales at the same time?
\end{problem}

We conjecture that this happens in the model of Theorem \ref{Una no Frechet}.

\begin{problem}
Is there a Fr\'{e}chet $\mathfrak{b}$-scale in the model of Theorem
\ref{Una no Frechet}?
\end{problem}

We do not know much about the ideal \textsf{nwd(}$\mathbb{D}\left(  \mathcal{D}%
\right)  $) in case $\mathcal{D}$ is not a Fr\'{e}chet $\mathfrak{b}$-scale.

\begin{problem}
\qquad\ \ \qquad\qquad\qquad\ \ \ \ \qquad\ \ 

\begin{enumerate}
\item Is it consistent that there is a $\mathfrak{b}$-scale $\mathcal{D}$ such
that \textsf{nwd(}$\mathbb{D}\left(  \mathcal{D}\right)  $) satisfies the
Category Dichotomy?

\item Is it consistent that for every $\mathfrak{b}$-scale $\mathcal{D},$ the
ideal \textsf{nwd(}$\mathbb{D}\left(  \mathcal{D}\right)  $) satisfies the
Category Dichotomy?
\end{enumerate}
\end{problem}

Finally, the following problem may also be of interest.

\begin{problem}
How do the combinatorial properties of a scale reflect on the topological
properties of its Dow space and vice versa?
\end{problem}

{\normalsize
\bibliographystyle{plain}
\bibliography{Bibliografia}
}\ 

\bigskip

Ra\'{u}l Figueroa-Sierra

Departamento de Matem\'{a}ticas, Universidad de Los Andes (Bogotá)

r.figueroa@uniandes.edu.co

\qquad\ \qquad\ \ \qquad\qquad\qquad\qquad\ \ \ \ \ \ 

Osvaldo Guzm\'{a}n

Centro de Ciencias Matem\'{a}ticas, UNAM.

oguzman@matmor.unam.mx

\qquad\qquad\qquad\ \ \ \ \ \ \ \qquad\qquad\qquad\ \ \ \ \ \ 

Michael Hru\v{s}\'{a}k

Centro de Ciencias Matem\'{a}ticas, UNAM.

michael@matmor.unam.mx

\qquad\qquad

Adam Kwela

Faculty of Mathematics, Physics and Informatics, University of Gda\'{n}sk

Adam.Kwela@ug.edu.pl

\end{document}